\newenvironment{mitemize}
{ \begin{itemize}
    \setlength{\itemsep}{0pt}
    \setlength{\parskip}{0pt}
    \setlength{\parsep}{0pt}     }
{ \end{itemize}       }
\newtheorem{thm}{Theorem}
\newtheorem{thma}{Theorem}
\newtheorem{clm}{Claim}[section]
\newtheorem{obs}[clm]{Observation}
\newtheorem{lem}[clm]{Lemma}
\newtheorem{prop}[clm]{Proposition}
\newtheorem{defin}[clm]{Definition}
\newtheorem{cor}[clm]{Corollary}
\newtheorem{conj}{Conjecture}
\newtheorem{problem}{Problem}
\newcommand{\cS}{\mathcal{S}}
\newcommand{\cF}{\mathcal{F}}
\newcommand{\N}{\mathbb{N}}
\newcommand{\R}{\mathbb{R}}
\newcommand{\Pro}{\mathbb{P}}
\newcommand{\E}{\mathbb{E}}
\newcommand{\cN}{\mathcal{N}}
\newcommand{\ep}{\varepsilon}
\newcommand{\al}{\alpha}
\newcommand{\g}{\gamma}
\newcommand{\D}{\Delta}
\newcommand{\ind}{1{\hskip -2.5 pt}\hbox{I}}
\newcommand{\ov}[1]{\overline #1}
\newcommand{\subjclass}[2][2010]{%
  \let\@oldtitle\@title%
  \gdef\@title{\@oldtitle\footnotetext{#1 \emph{Mathematics subject classification.} #2}}%
}
\newcommand{\keywords}[1]{%
  \let\@@oldtitle\@title%
  \gdef\@title{\@@oldtitle\footnotetext{\emph{Key words and phrases.} #1.}}%
}
\title{Convergence of the Quantile Admission Process with Veto Power}
\author{Naomi Feldheim\thanks{Weizmann Institute of Science, email: naomi.feldheim@weizmann.ac.il. Research partially supported by an NSF postdoctoral fellowship at Stanford, and by ISF grant 147/15.}
\hspace{2pt}  and Ohad Noy Feldheim\thanks{The Hebrew University of Jerusalem, email: ohad.feldheim@mail.huji.ac.il. Research partially supported by a postdoctoral fellowship in Stanford's mathematics department.}}
\date{}
\begin{document}
\maketitle
\begin{abstract}
The \emph{quantile admission process with veto power} is a stochastic processes suggested by Alon, Feldman, Mansour, Oren and Tennenholtz  as a model for the evolution of an exclusive social group.

The model itself consists of a growing multiset 
of real numbers, representing the opinions of the members of the club. On each round two new candidates, holding i.i.d. $\mu$-distributed opinions, apply for admission to the club. The one whose opinion is minimal is then admitted if the percentage of current members closer in their opinion to his is at least $r$. Otherwise neither of the candidates is admitted.

We show that for any $\mu$ and $r$, the empirical distribution of opinions in the club converges to a limit distribution. We further analyse this limit, show that it may be non-deterministic and provide conditions under which it is deterministic.

The results rely on a recent work of the authors relating tail probabilities of mean and maximum of any pair of unbounded i.i.d. random variables, and on a coupling of the evolution of the empirical $r$-quantile of the club with a random walk in a changing environment.
\end{abstract}

{\footnotesize
\noindent {\bf 2010 Mathematics subject classification:} {60K37, 60G50, 91D10}\\
% random evironments, random walks, models of societies.
{\bf Keywords:} social groups, admission process, evolving sets,  random walk in changing environment %random walk in dynamic environment
}
\section{Introduction}

Consider the following stochastic model for increasing sets $\cS=\{\cS_t\}_{t\in\N_0}$.
Let $\mu$ be an arbitrary probability distribution on $\R$, let $r\in(0,1)$ be  the \emph{quantile} parameter of the model, and let $(X_t,Y_t)_{t\in\N_0}$ be a collection of pairs of i.i.d. $\mu$-distributed random variables. For every $t\in\N_0$ denote the set $\cS_t^+:=\{s \in \cS_t\ :\ |s-\min(X_t,Y_t)|\le |s-\max(X_t,Y_t)|\}$.
Initialise the model with $\cS_0=\emptyset$
%{$\cS_0=\{\min(X_0,Y_0)\}$} or $\cS_0=-\infty$
and for every time step $t\in\N$ set
\[
\cS_t=
\begin{cases}
\cS_{t-1}\cup\{\min(X_t,Y_t)\}, & |\cS^+_{t-1}|\ge r|\cS_{t-1}|\\
\cS_{t-1}, &\text{otherwise.}
\end{cases}
\]
In addition, define $\nu^*_t$, the empirical distribution of opinions at step $t$, by $\frac1{|\cS_t|}\sum_{s\in \cS_t}\delta_s$ where $\delta_s$
is Dirac's delta measure at $s$.

This model, called the \emph{quantile admission process with veto power}, was suggested by Alon, Feldman, Mansour, Oren and Tennenholtz in \cite{social} as a model for the evolution of certain exclusive social clubs, that is, clubs whose  present members take part in an admission procedure for screening and selecting new members.
%Our particular scenario, represents the distinction between different club members by a one dimensional ray $[0,\infty)$ (e.g. from political left to political right).
In our particular scenario, the distinction between candidates is represented by a one-dimensional parameter in $\R$
(e.g. from political left to right).
The selection itself involves two elements, a \emph{voting system} and the founder's \emph{veto power}. At every time step two random candidates from the general population are considered for admission to the club. Each club member then votes for the candidate whose opinion is closest to his (abstaining in case of a draw). Finally, if a candidate holding the minimal opinion received at least $r$-fraction of the votes (including abstainers) then such a candidate is admitted. Otherwise, both candidates are rejected.

 Motivation for studying the model is further discussed in Section~\ref{sec:bg}. We remark that the fine details of this admission procedure (such as how to handle draws) appear to be immaterial for the results. The only important consideration in selecting these is avoiding the possibility of a deadlock under which no additional members will ever be admitted.

The authors of \cite{social} studied this model with $\mu$ which is uniform on $[0,1]$ for arbitrary $r\neq \frac12$. They showed that under these assumptions, $\nu^*_t$ converges to a deterministic distribution
and obtained bounds on its empirical $r$-quantile. In addition, they showed that this process demonstrates a counter-intuitive behaviour when $r$ is greater than $1/2$, when the empirical distribution of the club weakly converges to an atom at $1$, the supremum of opinions in the population.

Here we show that $\nu^*_t$ converges weakly for any distribution $\mu$, and analyse its form. In addition we provide necessary and sufficient conditions under which this limit distribution is deterministic, and provide several examples. In particular we recover the results of \cite{social} and extend them to the case $r= \frac12$, for which the limit is non-deterministic. Part of our interest in the process stems from its relations with \emph{random walks in changing environment} and from the new probabilistic tool-set that was used handle it. For a survey of this see Section~\ref{sec:tools and methods}.

\subsection{Results}
Throughout we denote by $X$ and $Y$ a pair of independent, $\mu$-distributed random variables.
Denote by $\mu_{\min}$ and $\mu_{\max}$ the infimum and supremum of the support of $\mu$, respectively, and write for
$m<\mu_{\max}$:
\vspace{-5pt}
\begin{align}\label{eq: rho}
\rho(m)=\rho_{\mu,r}(m) &:= \frac{\mu\big((m,\infty)\big)^2}{\mu*\mu\big([2m,\infty)\big)}-(1-r)
\\ &\phantom{:}= \Pro(X > m, Y>m \mid X+Y\ge 2m) - (1-r). \notag
\end{align}
%Note that $\rho(m)$ is a translation of the probability the minimum of two independent $\mu$-distributed random variables is greater than $m$ conditioned on the fact that their average is greater or equal than $m$.
Our main result is the following.

\begin{thm}\label{thm: main}
For any probability distribution $\mu$ on $\R_+$, the sequence $\nu^*_t$ converges weakly to a (possibly random) distribution $\nu_\infty$ on $\R_+$, almost surely.\\[-12pt]

Moreover, the limit distribution $\nu_\infty$ satisfies one of the following forms:\\[-18pt]
\begin{enumerate}[label={\emph(\roman*\emph)},ref={(\roman*)}]
\item \label{item:m criterion}
$\nu_\infty((s,\infty)) = \Pro\left( X>s,\, Y> s \mid X+Y\ge 2m\right)$
%$\mu\big([x,\infty)\cap [2m-x,\infty)\big) d\mu(x)$,
for $m<\mu_{\max}$ which satisfies
\vspace{-5pt}
\begin{equation*}
\rho(m)\le 0 \le \lim_{q\to m-}\rho(q) ,
\end{equation*}
\vspace{-25pt}
\item \label{item:m plus criterion}
$\nu_\infty((s,\infty)) = \Pro\left(X>s,\, Y > s \mid X+Y > 2m\right)$
%\propto \mu\big([x,\infty)\cap (2m-x,\infty)\big) d\mu(x)$,
for $m<\mu_{\max}$ which satisfies
\vspace{-5pt}
\begin{equation*}
\rho(m)\le 0 = \lim_{q\to m+}\rho(q),
\end{equation*}
\vspace{-25pt}
\item\label{item:max criterion}
 $\nu_{\infty}=\delta_{\mu_{\max}}$, and
$0\le\lim_{q\to\mu_{\max}}\rho(q).$
\end{enumerate}
\end{thm}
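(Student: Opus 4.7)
The main observable is the empirical $r$-quantile $q_t$ of $\cS_t$. Because $|s-\min(X_t,Y_t)|\le|s-\max(X_t,Y_t)|$ iff $s\le(X_t+Y_t)/2$, the admission criterion $|\cS_{t-1}^+|\ge r|\cS_{t-1}|$ is equivalent to $(X_t+Y_t)/2\ge q_{t-1}$, and conditional on admission the new member has the law of $\min(X,Y)$ given $X+Y\ge 2q_{t-1}$. Hence the probability that a newly admitted member exceeds $q_{t-1}$ is $\Pro(X>q_{t-1},Y>q_{t-1}\mid X+Y\ge 2q_{t-1})=\rho(q_{t-1})+(1-r)$, while by definition a $(1-r)$-fraction of current members lies above $q_{t-1}$. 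The conditional drift of $q_t$ therefore has the sign of $\rho(q_{t-1})$: positive where $\rho>0$, negative where $\rho<0$.

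The plan is first to establish that $q_t\to m_\infty$ almost surely for some (possibly random) limit $m_\infty\in[\mu_{\min},\mu_{\max}]$. To this end I would view $q_t$ as a random walk in a \emph{changing environment}: its increment scale is of order $1/|\cS_t|$ (only one member is admitted per successful round, moving the quantile by at most one rank), while its local drift is the function $\rho$ evaluated at the current location. I would then invoke the changing-environment random-walk framework together with the mean-versus-maximum tail comparison advertised in the abstract to extract almost-sure convergence. The attractors of this walk are precisely the points where $\rho$ transitions from nonnegative (immediately to the left) to nonpositive (at and immediately to the right), which is exactly what cases \ref{item:m criterion} and \ref{item:m plus criterion} describe. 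The split between them reflects whether $q_t$ approaches $m_\infty$ from strictly below (so the limiting conditioning event is $\{X+Y\ge 2m\}$) or only from strictly above (so it degenerates to $\{X+Y>2m\}$); the two events differ only if $\mu*\mu$ has an atom at $2m$. If $\rho$ stays nonnegative all the way up to $\mu_{\max}$ the walk is swept monotonically to the top, producing case \ref{item:max criterion}.

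Once $q_t\to m_\infty$ is in hand, the form of $\nu_\infty$ should follow from a conditional strong law of large numbers. Since $|\cS_t|\to\infty$, the contribution of any bounded initial segment of members to $\nu^*_t$ vanishes, so $\nu^*_t$ is asymptotically the empirical law of admitted members. As $q_{t-1}\to m$, the conditional law of each newly admitted member converges to $\mathcal{L}(\min(X,Y)\mid X+Y\ge 2m)$ (strictly in case \ref{item:m plus criterion}, or to $\delta_{\mu_{\max}}$ in case \ref{item:max criterion}), and averaging along the admission subsequence yields the same limit for $\nu^*_t$. Rewriting $\Pro(\min(X,Y)>s\mid\cdot)=\Pro(X>s,Y>s\mid\cdot)$ then recovers the displayed tail formulas.

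The principal obstacle, I expect, is the almost-sure convergence of $q_t$ to a \emph{single} limit. When $\rho$ has a flat zero interval the limit $m_\infty$ is genuinely random, so the argument must simultaneously allow randomness of $m_\infty$ and rule out recurrent oscillation across disjoint attracting regions, all while the step sizes shrink nonuniformly in $t$ and the environment (the set $\cS_t$ itself) co-evolves with the walk. A secondary technical point is promoting pointwise convergence of the tail probabilities $\nu^*_t((s,\infty))$ to weak convergence of $\nu^*_t$, but this should be immediate from monotonicity in $s$ combined with the weak convergence of the admitted-member laws.
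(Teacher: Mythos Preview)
Your outline matches the paper's strategy at the macroscopic level: reduce to the behaviour of the empirical $r$-quantile process, couple it with a random walk whose drift is $\rho$, classify attracting points by the sign changes of $\rho$, and then recover $\nu_\infty$ by a conditional law of large numbers. Two refinements are worth flagging.

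First, the paper does \emph{not} prove that $q_t$ (their $m_k$) converges; it proves that $F_{m_k}=\Pro(X+Y\ge 2m_k)$ converges. This is strictly weaker---if $\mu$ has gaps in its support, $m_k$ may fail to have a limit while $F_{m_k}$ still does---but it is exactly what is needed, since the conditional law of the next admitted member depends on $m_k$ only through $F_{m_k}$. Working with $F_{m_k}$ also makes the $m$ versus $m+$ dichotomy transparent: the limit is either $F_m$ or $F_{m+}$, and these differ precisely when $\mu*\mu$ has an atom at $2m$. Your plan to prove $q_t\to m_\infty$ directly is therefore a slight overreach; switch targets to $F_{q_t}$.

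Second, you correctly identify the zero-drift region as the principal obstacle, but the paper's handling of it is more delicate than ``invoke the changing-environment framework.'' The argument decomposes into four separate lemmata: boundedness of $m_k$ (using the mean-versus-minimum tail comparison to produce an infinite sequence of uniformly negative-drift traps), right-separation under negative $\rho$, left-separation under positive $\rho^+$, and---hardest---entrapment in a zero-drift interval of positive $\mu$-measure. The last of these is proved by constructing nested shells inside the interval and showing, via Azuma and Hoeffding, that the time to escape each shell grows super-exponentially, so that with positive probability the quantile never leaves. Your proposal gestures at all of this but does not resolve it; the content of the theorem lies precisely here.
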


In case that $\mu$ is continuous, an explicit form of the possible limiting measures of the admission process $\nu_\infty$ is given by the following corollary.

\begin{cor}\label{cor: limit}
Assume that $\mu$ is a continuous measure on $\R_+$. Then, almost surely, either $\nu_\infty=\delta_{\mu_{\max}}$ or
$d\nu_\infty(x) \propto \mu\big([x,\infty)\cap [2m-x,\infty)\big) d\mu(x)$
where $m$ satisfies $\rho_{\mu,r}(m)=0$.
\end{cor}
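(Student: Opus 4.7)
Corollary~\ref{cor: limit} is essentially a simplification of Theorem~\ref{thm: main} under the assumption that $\mu$ is continuous. Case~(iii) of the theorem yields $\nu_\infty=\delta_{\mu_{\max}}$ directly, so the task reduces to showing that cases~(i) and~(ii) collapse to the explicit density stated.

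The first step is to verify that when $\mu$ has no atoms, $\rho$ is continuous on $(-\infty,\mu_{\max})$: continuity of $\mu$ transfers to continuity of the numerator $\mu((m,\infty))^2$ and of the convolution $(\mu*\mu)([2m,\infty))$ in the denominator; the denominator is positive for $m<\mu_{\max}$ because the event $\{X>m,Y>m\}$ has positive probability and lies inside $\{X+Y>2m\}$. Combined with the one-sided inequalities on $\rho$ appearing in the hypotheses of~(i) and~(ii), continuity forces $\rho(m)=0$ in both cases. Moreover, continuity of $\mu$ gives $\Pro(X+Y=2m)=0$, so conditioning on $\{X+Y\ge 2m\}$ and on $\{X+Y>2m\}$ produce the same law, and cases~(i) and~(ii) therefore yield identical limit distributions.

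It then remains to extract the explicit density. Setting $C:=\Pro(X+Y\ge 2m)$ and integrating out $Y$ conditional on $X$, Fubini gives
\[
\nu_\infty((s,\infty)) = \frac{1}{C}\int_s^\infty \mu\bigl((\max(s,2m-x),\infty)\bigr)\,d\mu(x).
\]
Differentiating this identity in $s$ (which appears both as the lower limit of integration and inside $\max(s,2m-x)$), I would split the region at $x=2m-s$ and differentiate each piece separately; after a small cancellation the contributions combine into $-(2/C)\,\mu([\max(s,2m-s),\infty))\,d\mu(s)$. Recognising $\max(s,2m-s)$ as the left endpoint of $[s,\infty)\cap[2m-s,\infty)$ then yields the claimed proportionality.

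The only step requiring real computation is this differentiation, and the mild subtlety is the cancellation producing the factor of $2$. A clean alternative I would fall back on if the algebra proved delicate is to compute $\Pro(\min(X,Y)\in ds,\,X+Y\ge 2m)$ directly by the symmetry between $X$ and $Y$ (legitimate because $\mu$ is atomless, so ties have probability zero), which immediately gives $2\,\mu((\max(s,2m-s),\infty))\,d\mu(s)$ without any case-splitting. Either route closes the proof.
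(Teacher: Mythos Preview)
Your proposal is correct and matches the paper's approach: the symmetry argument you give as a fallback is exactly how the paper derives the density, writing $\Pro(\min(X,Y)>s,\ X+Y\ge 2m)=2\int_s^\infty \mu\big([y,\infty)\cap[2m-y,\infty)\big)\,d\mu(y)$ and reading off the Radon--Nikodym derivative with respect to $\mu$. Your explicit verification that continuity of $\mu$ forces $\rho(m)=0$ in cases~(i)--(ii) and makes the two conditionings $\{X+Y\ge 2m\}$ and $\{X+Y>2m\}$ coincide is a point the paper leaves implicit.
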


Observe that the empirical distribution of the club members' opinions in the veto-power process demonstrates a lighter tail near infinity, in comparison with the
distribution of the general population (even though, as will become evident in Section \ref{sec: examples} below, the mean of the empirical distribution of the club members' opinions is often larger then the {mean of $\mu$}).
\begin{cor}\label{cor: tail}
If $\mu_{\max}=\infty$ and $\nu_\infty=\lim_{t\to\infty} \nu_t^*$ then, almost surely,
$\lim_{s\to\infty}\frac{\log \nu_\infty(s,\infty)}{\log \mu(s,\infty)} = 2$.
\end{cor}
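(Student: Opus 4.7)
The plan is to apply Theorem~\ref{thm: main} and evaluate $\nu_\infty\big((s,\infty)\big)$ explicitly for large $s$. Under the hypothesis $\mu_{\max}=\infty$, case~\ref{item:max criterion} of the theorem would yield $\nu_\infty=\delta_{\mu_{\max}}$, which is not a probability distribution supported on $\R_+$; so almost surely we must be in case~\ref{item:m criterion} or case~\ref{item:m plus criterion}. In either, there is a (possibly random) finite value $m<\infty$ such that
\[
\nu_\infty\big((s,\infty)\big) \;=\; \Pro\big(X>s,\,Y>s \,\big|\, X+Y \ge 2m\big)
\]
(or the analogous expression with strict inequality in the conditioning).

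For any $s>m$ the event $\{X>s,\,Y>s\}$ forces $X+Y>2s>2m$, so the conditioning becomes vacuous. Writing $C:=\Pro(X+Y\ge 2m)$, which is strictly positive because $\mu_{\max}=\infty$ and $m$ is finite, we obtain
\[
\nu_\infty\big((s,\infty)\big) \;=\; \frac{\mu\big((s,\infty)\big)^{2}}{C} \qquad \text{for all } s>m.
\]
Taking logarithms and dividing by $\log\mu\big((s,\infty)\big)$ yields
\[
\frac{\log\nu_\infty\big((s,\infty)\big)}{\log\mu\big((s,\infty)\big)} \;=\; 2 \;-\; \frac{\log C}{\log\mu\big((s,\infty)\big)}.
\]
Since $\mu_{\max}=\infty$ forces $\mu\big((s,\infty)\big)\to 0$ as $s\to\infty$, the denominator on the right tends to $-\infty$ while the numerator is the fixed constant $\log C$. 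The correction therefore vanishes, and the limit equals $2$ almost surely.

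I do not expect a serious obstacle. The content of the statement is essentially contained in Theorem~\ref{thm: main}, together with the elementary observation that conditioning on $\{X+Y\ge 2m\}$ is redundant once $s>m$. The only point requiring a brief line of justification is the reduction from three cases to two, namely that case~\ref{item:max criterion} is incompatible with $\nu_\infty$ being a probability measure on $\R_+$ under the hypothesis $\mu_{\max}=\infty$.
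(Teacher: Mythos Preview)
Your proof is correct and follows the natural route the paper implicitly has in mind: the corollary is stated without proof precisely because it is an immediate consequence of Theorem~\ref{thm: main}, and your derivation --- ruling out case~\ref{item:max criterion} when $\mu_{\max}=\infty$, then observing that $\{X>s,\,Y>s\}\subseteq\{X+Y>2m\}$ for $s>m$ so that $\nu_\infty((s,\infty))=\mu((s,\infty))^2/C$ --- is exactly that computation. One small remark: your exclusion of case~\ref{item:max criterion} via ``$\delta_\infty$ is not a probability measure'' is fine, but if you prefer you can also invoke the side condition directly, since Theorem~\ref{thm: FF} gives $\liminf_{q\to\infty}\rho(q)=-(1-r)<0$, contradicting $0\le\lim_{q\to\mu_{\max}}\rho(q)$.
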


However, $\nu_\infty$ need not be deterministic.

\begin{prop}\label{prop: atom exm}
For any $r\in(0,1)$, there exists a measure $\mu$ for which there are infinitely many possible limiting distributions for $\lim_{t\to\infty}\nu_t^*$, each occurring with positive probability.
\end{prop}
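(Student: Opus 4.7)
The plan has two steps: construct a discrete measure $\mu$ for which the function $\rho_{\mu,r}$ defined in~\eqref{eq: rho} changes sign at infinitely many points, so that Theorem~\ref{thm: main}\ref{item:m criterion} admits infinitely many candidate limit distributions; then show, via the random-walk-in-changing-environment coupling outlined in the abstract, that each such candidate is attained with positive probability.

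For the construction I would take $\mu = \sum_{k\ge 1} p_k \delta_{a_k}$ with a sufficiently rapidly increasing sequence $a_k$ (for definiteness $a_k = 4^k$), so that all pairwise sums $a_i + a_j$ are distinct and well separated. Then $\mu((m,\infty))$ and $(\mu*\mu)([2m,\infty))$ are piecewise constant step functions in $m$, hence so is $\rho_{\mu,r}$. Choose the weights $p_k$ inductively so that $\rho_{\mu,r}$ is strictly positive on each interval $(a_{2j},a_{2j+1})$ and strictly negative on each $(a_{2j+1},a_{2j+2})$. This is achievable because adding a small atom $p_k\delta_{a_k}$ perturbs $\rho_{\mu,r}$ by a quantity that can be made arbitrarily small, while the jump of $\rho_{\mu,r}$ across $a_k$ can be tuned to have any desired sign via the relative contribution of $p_k$ to the numerator and denominator of~\eqref{eq: rho}. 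Each $m = a_{2j+1}$ then satisfies $\rho_{\mu,r}(a_{2j+1}) < 0 < \lim_{q\to a_{2j+1}-}\rho_{\mu,r}(q)$, so by Theorem~\ref{thm: main}\ref{item:m criterion} it determines a candidate limit $\nu_\infty^{(j)}$; these are pairwise distinct since their tails $\Pro(X>s,\,Y>s \mid X+Y\ge 2 a_{2j+1})$ differ in the denominator for each $j$.

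For the second step I would invoke the coupling between the empirical $r$-quantile of $\cS_t$ and a random walk whose drift at location $m$ equals, up to a positive multiplicative factor, $-\rho_{\mu,r}(m)$. Each $a_{2j+1}$ then sits in a basin of attraction: the drift points rightward just to its left and leftward just to its right. By shrinking the weights $p_k$ further during the inductive construction, the basins around distinct $a_{2j+1}$ can be made uniformly deep, and a standard Lyapunov or exponential-martingale argument shows that once the walk is inside a basin it remains there forever with positive probability. On the other hand, any specified basin can be entered during the early fluctuations of the process with positive probability (for instance, by the first few admitted members landing near the target $a_{2j+1}$), so each $\nu_\infty^{(j)}$ is the weak limit of $\nu^*_t$ with positive probability.

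The main obstacle is making the drift analysis of the second step quantitative: the true one-step increment of the $r$-quantile is only approximately $-\rho_{\mu,r}$ of its present value, with an error that depends on the entire current empirical distribution, and this error must be controlled uniformly on all basins simultaneously. This forces a delicate balance in the inductive choice of $p_k$, which must be small enough to make the drift error negligible yet large enough for the sign pattern of $\rho_{\mu,r}$ installed at the previous stages to survive the perturbation. A secondary, milder difficulty in step one is that each new atom $p_k\delta_{a_k}$ also alters $\rho_{\mu,r}$ at all previous convolutional midpoints $(a_i + a_k)/2$; the rapid geometric growth of the $a_k$ is chosen precisely to keep these perturbations confined to disjoint regions and preserve the alternating sign structure.
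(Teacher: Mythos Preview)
Your outline is plausible and could likely be completed, but it takes a much harder route than the paper, and the obstacles you flag at the end are precisely the ones the paper's construction is designed to sidestep.

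The paper chooses the explicit measure $\mu=\sum_{\ell\ge 0}(1-p)p^{\ell}\delta_{1-2^{-\ell}}$ with $p$ small enough that $r<1-p^{2}$. The point of the locations $1-2^{-\ell}$ is an algebraic one: for every atom $a$ in the support, $X+Y\ge 2a$ if and only if $X\ge a$ and $Y\ge a$. Hence $\rho^{+}(a)=r>0$ while $\rho(a)=p^{2}-1+r<0$ at \emph{every} atom simultaneously, with no inductive tuning of weights and no interference between convolutional midpoints. More importantly, this same factorisation makes trapping from below \emph{deterministic}: once $m_k=a$, every subsequently admitted member has opinion $\ge a$, so $m_k$ can never drop below $a$. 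Only the upward direction needs a probabilistic argument, and that is exactly Proposition~\ref{prop: ell people} applied to the singleton $\{a\}$. Reaching any prescribed atom with positive probability is immediate (have the first two candidates both hold opinion $a$), so each atom gives a distinct limit $\nu_\infty$ with positive probability.

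By contrast, your scheme requires (i) an inductive construction of weights to force an alternating sign pattern for $\rho$, (ii) control of the perturbations at all midpoints $(a_i+a_j)/2$, and (iii) a two-sided quantitative trapping argument uniform over infinitely many basins. Each of these is doable in principle---indeed (iii) is essentially the content of Lemmata~\ref{lem: right separators} and~\ref{lem: left separators}---but none is needed once one notices that a measure can be chosen so that the ``basin'' at every atom has one wall that is infinitely high. What your approach would buy is flexibility (it does not rely on a special arithmetic structure of the support), but for this proposition the paper's construction is both shorter and avoids the delicate balancing you correctly identify as the main difficulty.
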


We further provide a necessary and sufficient condition for having a deterministic $\nu_\infty$ for all $r\in(0,1)$.
This extends \cite{social}*{Theorem 3.5} which provides a weaker sufficient condition.
%Strict monotonicity of $\rho$, however, implies that $\nu_\infty$ is deterministic and explicit.

\begin{thm}\label{thm: determinism}
Let $\mu$ be a given probability measure.
Then $\nu_\infty$ is deterministic for all $r\in (0,1)$ if and only if $\rho_{\mu,1}(m)$ is strictly monotone on the support of $\mu$.
\end{thm}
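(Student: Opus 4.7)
The plan is to use the identity $\rho_{\mu,r}(m) = \rho_{\mu,1}(m) - (1-r)$ to recast the three criteria of Theorem~\ref{thm: main} as statements about how the function $\rho_{\mu,1}$ sits relative to the level $1-r$: cases (i) and (ii) together say that $m$ is a \emph{downcrossing} of the level $1-r$ by $\rho_{\mu,1}$ (that is, $\rho_{\mu,1}(m) \le 1-r$ together with a one-sided lower bound of $1-r$ on $\rho_{\mu,1}$ near $m$), while case (iii) says that $\rho_{\mu,1}$ remains at or above $1-r$ all the way up to $\mu_{\max}$, yielding $\nu_\infty=\delta_{\mu_{\max}}$.

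For the ``if'' direction, assume that $\rho_{\mu,1}$ is strictly monotone on $\supp(\mu)$; by symmetry, take it strictly decreasing. For every $r \in (0,1)$, strict monotonicity implies that the level $1-r$ is crossed at most once within $\supp(\mu)$, and whenever it is crossed, the crossing is necessarily a downcrossing. This uniquely pins down the $m$ admissible under cases (i) or (ii); when no such $m$ exists, $\rho_{\mu,1}$ lies above $1-r$ throughout $\supp(\mu)$ and case (iii) applies. In either scenario Theorem~\ref{thm: main} allows only one distribution for $\nu_\infty$, so the limit is a.s.\ deterministic. For the ``only if'' direction, suppose $\rho_{\mu,1}$ is not strictly monotone on $\supp(\mu)$. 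Then either (a) $\rho_{\mu,1}$ is constant equal to some value $c$ on a nontrivial sub-interval $I \subset \supp(\mu)$, in which case $r = 1-c$ makes every $m \in I$ an admissible candidate in case~(i); or (b) $\rho_{\mu,1}$ ``wiggles'', producing a level $1-r$ that is downcrossed at two distinct points of $\supp(\mu)$. Either way Theorem~\ref{thm: main} admits several possible distributions for $\nu_\infty$.

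The main obstacle is in the ``only if'' direction: one must argue that the non-uniqueness permitted by the characterisation is in fact reflected in the law of $\nu_\infty$, i.e.\ that each admissible $m$ is realised with positive probability rather than eliminated by the dynamics. This step should draw on the proof of Theorem~\ref{thm: main} and specifically on the coupling of the empirical $r$-quantile of the club with a random walk in a changing environment: when several equilibria coexist, early fluctuations of this walk can drive the process toward any one of them with positive probability, in the spirit of Proposition~\ref{prop: atom exm}. A further subtlety lies in handling atoms of $\mu$ and the possibly-jumpy behaviour of $\rho_{\mu,1}$ near such points, where the naive notion of ``downcrossing'' must be refined to match the strict inequalities appearing in (i)--(iii).
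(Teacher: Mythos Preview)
Your high-level plan matches the paper's: both directions rely on Theorem~\ref{thm: main} together with the translation $\rho_{\mu,r}=\rho_{\mu,1}-(1-r)$, and the necessity splits into the ``wiggle'' case and the ``flat interval'' case. Two points, however, need attention.

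\textbf{Sufficiency.} You pin down a unique admissible $m$, but even for a single $m$ cases \ref{item:m criterion} and \ref{item:m plus criterion} describe \emph{two} measures, conditioned on $\{X+Y\ge 2m\}$ and $\{X+Y>2m\}$ respectively. These coincide only if $\mu*\mu(\{2m\})=0$. The paper closes this by observing that $\rho$ is a translated quotient of two monotone decreasing functions with a right-continuous numerator and a left-continuous denominator; strict monotone \emph{decrease} of the quotient therefore forces continuity of the denominator, i.e.\ $\Pro(X+Y\ge 2m)=\Pro(X+Y>2m)$ for every $m$. Without this step your argument does not yet give a single deterministic limit. (Incidentally, there is no ``by symmetry, take it decreasing'': since $\rho_{\mu,1}\to 1$ at $-\infty$ and has $\liminf 0$ near $\mu_{\max}$ by Theorem~\ref{thm: FF}, a strictly monotone $\rho_{\mu,1}$ must be decreasing.)

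\textbf{Necessity.} You correctly flag the obstacle and leave it open. The paper resolves it not by revisiting the proof of Theorem~\ref{thm: main} but via the separate Proposition~\ref{prop: non-determinism}, which gives direct control over the law of the empirical quantile: if $\rho(x)<0$ then $\Pro(m_k\le x\text{ a.e.})>0$, if $\rho(x)>0$ then $\Pro(m_k>x\text{ a.e.})>0$, and if $\rho\equiv 0$ on an interval of positive $\mu$-measure then $\Pro(m_k\text{ stays in that interval})>0$. For your case~(b), pick $x_0<x_1$ with $\rho_{\mu,1}(x_0)<1-r<\rho_{\mu,1}(x_1)$ and apply the first two statements to get two events of positive probability forcing incompatible limits in Theorem~\ref{thm: main}. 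For your case~(a), the third statement (whose proof is Lemma~\ref{lem: zero drift barrier}, the zero-drift analysis) yields positive probability of trapping $m_k$ in disjoint subintervals of the flat region, which again produce distinguishable values of $F_\ell$ and hence of $\nu_\infty([b,\infty))$. Your sketch of ``early fluctuations drive the process toward any equilibrium'' is the right intuition, but the actual mechanism is the barrier/separator machinery (Lemmata~\ref{lem: right separators}--\ref{lem: zero drift barrier}) packaged into Proposition~\ref{prop: non-determinism}, not something one can read off Theorem~\ref{thm: main} alone.
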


\begin{comment}
\begin{thm}\label{thm: unique solution}
If $\rho_{\mu,r}(m)$ is strictly monotone on the support of $\mu$, then $\nu_\infty$ is uniquely determined by $\mu$ and $r$.
Moreover, in this case, $\nu_\infty=\delta_{\mu_{\max}}$ if and only if $\mu_{\max}<\infty$ and \\
\noindent
$\lim_{m\to\mu_{\max}-}\Pro(X > m, Y>m \mid X+Y\ge 2m) \ge 1-r$.
%$r$ satisfies $0\le\lim_{m\to\mu_{\max}}\rho(m)$.
\end{thm}

Moreover, monotonicity is also necessary for the existence of a unique limit for all $r$.

\begin{thm}\label{thm: non-determinism}
If $\mu$ is such that $\rho_{\mu,1}(m)$ is not monotone decreasing, then there exists $r\in(0,1)$ for which $\nu_\infty$ is non-deterministic.
Moreover, if $\rho_{\mu,r}(m)=0$ on an interval of positive $\mu$-measure, then for this $r$ the limit $\nu_\infty$ is non-deterministic.
\end{thm}
\end{comment}

Finally we provide the following criterion for verifying the monotonicity of $\rho$.

\begin{prop}\label{prop: convex exm}
Let $g(x)=-\log\mu((x,\infty))$ for $x\in [\mu_{\min},\mu_{\max}]$. If $g$ is thrice differentiable with $g'' \ge 0$ and $g'''\le 0$ on $(\mu_{\min},\mu_{\max})$,
then $\rho_{\mu,1}(m)$ is strictly monotone decreasing.
\end{prop}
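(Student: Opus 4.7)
The plan is to turn the monotonicity of $\rho_{\mu,1}$ into a comparison of hazard rates and then to exploit the convexity of $1/g'$ implied by the hypotheses. Writing $F(x)=\mu((x,\infty))=e^{-g(x)}$, so that the density is $f=g'F$ on the interior of the support, one has $\rho_{\mu,1}(m)=F(m)^{2}/\Pro(X+Y\ge 2m)$, and a direct logarithmic differentiation gives
\[
\frac{d}{dm}\log\rho_{\mu,1}(m)=-2g'(m)+\frac{2f_{X+Y}(2m)}{\Pro(X+Y\ge 2m)}.
\]
Thus strict decrease is equivalent to the pointwise inequality
\[
I(m):=f_{X+Y}(2m)-g'(m)\,\Pro(X+Y\ge 2m)<0 \qquad \text{for every }m\in(\mu_{\min},\mu_{\max}),
\]
i.e.\ the hazard rate of $X+Y$ at $2m$ is strictly below the hazard rate of $X$ at $m$.

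The next step is to express $I(m)$ as
\[
I(m)=\int_{\R}f(2m-u)\,F(u)\,\bigl[g'(u)-g'(m)\bigr]\,du,
\]
with the convention $g'\equiv 0$ outside $[\mu_{\min},\mu_{\max}]$, and to pair $u$ with $2m-u$ about the midpoint $m$. For $u$ and $2m-u$ both in the interior of the support, using $f=g'F$, the combined paired integrand equals
\[
F(u)F(2m-u)\bigl\{2g'(u)g'(2m-u)-g'(m)\bigl[g'(u)+g'(2m-u)\bigr]\bigr\},
\]
which is non-positive exactly when $\tfrac{1}{g'(m)}\le\tfrac12\bigl[\tfrac{1}{g'(u)}+\tfrac{1}{g'(2m-u)}\bigr]$, that is, when $1/g'$ is midpoint convex at $m$. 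This midpoint convexity follows from
\[
\Bigl(\tfrac{1}{g'}\Bigr)''=\frac{2(g'')^{2}-g'''g'}{(g')^{3}},
\]
whose numerator is non-negative under the hypotheses $g''\ge 0$, $g'''\le 0$, together with $g'>0$. This already gives $I(m)\le 0$.

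The main obstacle is upgrading to the strict inequality $I(m)<0$. I would first note that a finite $\mu_{\max}$ is incompatible with the assumptions: since $g''$ is non-increasing and finite on the open interval, $g'$ and hence $g$ would remain bounded up to a finite $\mu_{\max}$, contradicting $F(\mu_{\max})=0$; hence $\mu_{\max}=\infty$. Now split into cases. If $g''>0$ somewhere, then $(1/g')''>0$ there; using that $g''$ is non-increasing, for every interior $m$ one can pick $s>0$ so that the pair $(m-s,m+s)$ meets a region of strict convexity of $1/g'$, producing a strictly negative contribution on a set of positive measure. If instead $g''\equiv 0$ on the whole support, then $\mu$ is exponential on $[\mu_{\min},\infty)$ with $\mu_{\min}>-\infty$; the paired integrand vanishes, but the range $u<\mu_{\min}$, where $g'(u)=0$ and $F(u)=1$, contributes to $I(m)$ the term $-g'(m)F(2m-\mu_{\min})$, which is strictly negative. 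The delicate point is the bookkeeping that certifies that at least one of these two mechanisms—interior strict convexity or boundary contribution—strictly fires at every $m$ in the interior, covering also intermediate regimes such as $g''$ vanishing only on a terminal subinterval (which again forces $\mu_{\max}=\infty$ and makes the boundary term available).
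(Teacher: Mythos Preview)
Your approach is correct and closely parallels the paper's, though the two differ in which convexity inequality is invoked and in how the strictness is obtained.

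Both proofs differentiate, express the derivative as an integral, and symmetrize about $m$. The paper works with $\Psi(m)=\Pro(X+Y\ge 2m)/F(m)^2$ and shows $\Psi'>0$; since $\Psi'(m)=-2e^{2g(m)}I(m)$, this is exactly your inequality $I(m)<0$. The substantive difference is in the paired integrand. The paper's symmetrized expression contains the factor $2g'(m)-g'(m+x)-g'(m-x)$, which is non-negative directly by concavity of $g'$ (i.e.\ $g'''\le 0$ alone), together with leftover $g''$ terms that are non-negative by $g''\ge 0$. Your paired integrand $2g'(u)g'(2m-u)-g'(m)[g'(u)+g'(2m-u)]$ instead requires midpoint convexity of $1/g'$, which you correctly derive from $(1/g')''=[2(g'')^2-g'''g']/(g')^3\ge 0$; this uses both hypotheses simultaneously. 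Either route yields $I(m)\le 0$.

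On strictness, you are making the argument harder than it needs to be. The hypotheses force not only $\mu_{\max}=\infty$ (your argument) but also $\mu_{\min}>-\infty$: if $\mu_{\min}=-\infty$ then $g'$ is a non-negative, non-decreasing, concave function on all of $\R$, which forces $g''\equiv 0$, hence $g$ linear, which cannot be the negative log of a survival function on $\R$. The paper records this and normalizes $\mu_{\min}=0$. Once $\mu_{\min}>-\infty$, your own boundary contribution
\[
\int_{-\infty}^{\mu_{\min}} f(2m-u)\,F(u)\,[g'(u)-g'(m)]\,du \;=\; -\,g'(m)\,F(2m-\mu_{\min})
\]
is strictly negative for every interior $m$ (since $g'(m)>0$ and $2m-\mu_{\min}<\mu_{\max}=\infty$ gives $F(2m-\mu_{\min})>0$). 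This already gives $I(m)<0$ without any case analysis on where $g''>0$; the paper's proof likewise extracts strictness from this same boundary term (written there as $e^{2g(m)-g(2m)}$ times a positive factor). So your dichotomy between ``interior strict convexity'' and ``boundary contribution'' is unnecessary: the boundary mechanism always fires.
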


\subsection{Examples}\label{sec: examples}

\begin{comment}
\begin{proof}[Proof of Corollary \ref{cor: limit}]
In cases (i) and (ii) of Theorem \ref{thm: main}, the limit is given by
\begin{align*}
\nu_\infty([s,\infty) ) &\propto \Pro\left( \min(X,Y)\ge s \cap X+Y\ge m\right)
\\ &  =\Pro(X\ge s, \ Y\ge s, \ X\ge 2m-Y)
\\ & =2 \Pro(X\ge Y,\ Y\ge s, \ X\ge 2m-Y )
 \\ & = \int_s^\infty \mu([y,\infty) \cap [2m-y,\infty))d\mu(y).
\end{align*}
Therefore, $\nu_\infty$ has density proportional to $y\mapsto \mu([y,\infty) \cap [2m-y,\infty))d\mu(y)$.

\end{proof}
\end{comment}

Using these results, %Corollary \ref{cor: limit},
it is possible to compute the limit $\nu_\infty$ for many example of $\mu$ and $r$. We state here a few examples of interest, omitting calculations which are straightforward.
Throughout, we fix $r\in (0,1)$.

 \medskip
 {\bf Uniform distribution}
(extending \cite{social}*{Section 4}).
Let $\mu$ be the uniform distribution on $[0,1]$.
The function $\rho(m)$ from \eqref{eq: rho} is computed to be
\vspace{-3pt}
\[
\rho(m) = \begin{cases}
\frac{(1-m)^2}{1-2m^2}-(1-r), & 0\le m< \frac 1 2\\
\frac{1}{2} - (1-r), & \frac 1 2 \le m\le 1.
\end{cases}
\]
Using Theorem \ref{thm: main} we deduce different behaviour in the following cases. The first two cases recover the results of \cite{social}, whereas the case $r=\frac 1 2$ is new.
\begin{itemize}
\item $r>\frac 1 2$: a.s., the limit is $\nu_\infty =\delta_1$.
\item $r<\frac 1 2$: a.s., the limit $\nu_\infty$ is an absolutely continuous measure with density proportional to
    \vspace{-3pt}
\begin{equation*}%\label{eq: limit for uniform}
d\nu_\infty \propto
\begin{cases}
x-(2m-1), & 0\le x\le m,\\
1-x, & m\le x \le 1,
\end{cases}
\end{equation*}
where $m\in[0,\frac 1 2]$ is a solution to $\frac{(1-m)^2}{1-2m^2}=1-r$, namely $m=\frac{1-\sqrt{1-3r+2r^2}}{3-2r}$.

Observe that as $r\to\frac 1 2$, the density of the limiting measure $\nu_\infty$ approaches the centred triangle function $\min(x,1-x)$.
This may be viewed as a perfect balance between the bias caused by the veto power and the bias caused by the quorum-admission condition.

\item $r=\frac 1 2$: the limit $\nu_\infty$ is an absolutely continuous measure with density proportional to
    \vspace{-3pt}
\begin{equation*}%\label{eq: limit for uniform2}
d\nu_\infty \propto
\begin{cases}
x-(2M-1), & 2M-1\le x\le M,\\
1-x, & M\le x \le 1,
\end{cases}
\end{equation*}
where $M$ is a random variable supported on $[\frac 1 2, 1]$.
Moreover, by Lemma \ref{lem: zero drift barrier} below, $\Pro(M \in I)>0$ for any $I\subseteq [\frac 1 2 ,1]$.
In particular, the limit is not deterministic (as can be verified by Theorem~\ref{thm: determinism}).

\end{itemize}

\medskip
{\bf Normal distribution.} Let $\mu\sim \cN(0,1)$. In this case $\frac 1 2 \mu * \mu \sim \cN(0,1)$, and we obtain from \eqref{eq: rho} by direct computation that
\[
\rho_{\mu,r}(m) = \frac{\mu\big( (m,\infty) \big)^2}{\mu\big([m,\infty)\big)} -(1-r) = \mu\big((m,\infty)\big)-(1-r).
\]
Therefore, for any $r\in [0,1]$ the function $m\mapsto \rho_{\mu,r}(m)$ is strictly monotone on $\R$, and we obtain by Theorem \ref{thm: determinism} that $\nu_\infty$ is unique. By Theorem \ref{thm: main}, the limiting measure $\nu_\infty$ has density proportional to
\[
d\nu_\infty \propto
\begin{cases}
e^{-x^2/2}\int_{2m-x}^\infty e^{-t^2/2} dt, & x\le m,\\
e^{-x^2/2}\int_{x}^\infty e^{-t^2/2} dt, & x>m,
\end{cases}
\]
where $m$ is the unique solution to $\frac 1 {\sqrt{2\pi}}\int_m^\infty e^{-t^2/2}= 1-r$.

 \medskip
{\bf Exponential distribution.} Let $\mu \sim \text{Exp}(1)$. Then $\nu^*_t$ will almost surely converge to the unique measure $\nu_\infty$ with density
\[
\frac{1 }{m+\frac 1 2}\cdot \begin{cases}
1, & x\le m, \\
e^{-2(x-m)}, & x>m,
\end{cases}
\]
where $m = \frac{r}{2(1-r)}$. Observe that when $r$ tends to 1, $m$ tends to infinity, demonstrating a drift to the right much like the uniform case.

\begin{comment}
In order to prove this result we notice that the drift $\rho(m)$ of \eqref{eq: rho} can be written explicitly as
\[
\rho(m) = \frac{(e^{-m})^2} {\int_{2m}^\infty x e^{-x} dx} -(1-r) = \frac{1}{2m+1}-(1-r).
\]
Since this function is strictly monotone in $m$, by Theorem \ref{thm: determinism}, there is a unique deterministic limit $\nu_\infty$. By Corollary \ref{cor: limit} it proportional to $e^{-2x}$ when $x>m$ and $e^{-(2m-x)}e^{-x}=e^{-2m}$ when $m<x$.
\end{comment}

\medskip
{\bf Compressed exponential distributions.} Consider the measure $\mu([x,\infty)) =e^{-x^\al}\ind(x\ge 0)$ for $\al\in [1,2]$. By Proposition \ref{prop: convex exm} and Theorem \ref{thm: determinism}, for all $r$ the limit $\nu_\infty$ is deterministic. By Corollary \ref{cor: limit} it has density which is proportional to
\vspace{-5pt}
\[
\al x^{\al-1} e^{-x^\al}\cdot
\begin{cases}
e^{-(2m-x)^{\al}}, & x\le m\\
e^{-x^{\al}}, & x>m,
\end{cases}
\]
\vspace{-12pt}

where $m$ is the unique solution to
\vspace{-5pt}
\[
e^{2m^\al} \left(\int_0^{2m} \al x^{\al-1} e^{-x^\al-(2m-x)^\al} dx + e^{-(2m)^\al}\right)  = \frac 1{1-r}.
\]
\vspace{-12pt}

For $\al=2$ this reduces to $\int_0^{2m} 2x e^{-2(x-m)^2 }dx  + e^{-2m^2} = \frac {1}{1-r}$,
 so that $m$ is of the order $\frac {1}{1-r}$.

%\pagebreak[3]
%Finally, we re-establish one of the main results of \cite{social}, concerning with the case in which $\mu$ is uniform in an interval.
%\begin{prop}\label{prop: unif exm}
%If $\mu$ is the uniform measure on $[0,1]$, then
%%the empirical distribution of opinions
%$\nu^*_t$ almost surely converges weakly to the following distribution:
%\[
%\begin{cases}
%\delta_1, & r\ge \frac 1{2}\\
%{\color{red}\text{FILL}}, & r<\frac 1 2.
%\end{cases}
%\]
%
%\end{prop}

%It would be interesting to investigate similar questions (regarding existence and uniqueness of the weak limit of $\nu^*_t$)
%for the quantile admission process without veto power, and in higher dimensions (that is, when $\mu$ is a distribution on $\R^d$).

\subsection{Background and motivation}\label{sec:bg}

The study of evolving social groups originates in sociology and economics, where analysis of such processes is expected to improve our understanding
of human social behaviour, allowing us to anticipate the implications of various factors involved in the evolution of clubs, corporations, organizations and societies.
At the same time, the models suggested for the evolution of social groups turned out to be of independent interest for the development of probability theory.

\medskip
\textbf{Sociological interest.} Most of the research conducted on evolving social groups revolves around \emph{decentralised} social groups, in which local
laws are used to govern the admission into the group. These include social networks, business relations and the set of individuals holding a certain opinion.
These social groups are characterised by the fact that admission of a new member into the group could be initiated by each of a certain class of members without any interaction with the rest of the members. The evolution of such groups is somewhat reminiscent of condensation behaviours
in statistical mechanics. Indeed, such models were recently given the title ``Statistical Physics of Social Dynamics'' by Castellano, Fortunato and Loreto~\cite{CFL}. In particular, a few models for opinion formation and formation of social groups were suggested and analysed, see e.g. \cite{Lorenz} and references within. Like our model, these models represent  opinions by numbers, or vectors in Euclidian space \cite{CFL}*{Chapter III}, and association is governed by homophily (i.e., proximity of opinion, see \cite{McPhersan}).

In \cite{social}, however, Alon et al. {investigated models for} the evolution of \emph{centralised} social groups, where admission is governed by a global procedure, such as a voting process or a screening committee. Examples for such groups are ample and diverse, ranging from secret societies, to members of the national academy of sciences and to the inner circle of a martial art association. They include communes and condominiums, justices of the supreme court, associate professors of a math department and nations in the united nations. The evolution of the composition of such groups could vary greatly depending on the nature of the admission process.
One purpose of the authors was to study the different behaviours of models in which the admission is relative, i.e., based on the comparison of several candidates. This they do both for growth models like the one discussed here and for models of population with fixed size. In particular they explore the difference between a voting model, where the candidate who received the highest number is admitted, and a consensus model, where all members must agree on the most suitable candidate. As an intermediate model they suggested the $r$-quantile admission process, where a candidate must obtain a quorum of a certain fraction of the votes in order to be admitted.
Such admission models are common for the selection of justices and governmental committees.

One property of the relative consensus model is its radicalisation behaviour. In a population holding a uniform distribution of opinion in $[-1,1]$, for example, it almost surely converges to atomic measures at $\pm1$. This behaviour is explained by the fact that the only case in which new members are admitted is when both candidates are extremists in the same direction, in which case the milder among the two is admitted. To better understand this radicalisation effect and its interaction with $r$ in the $r$-quantile admission process, the authors put it against a body holding a veto power who allows only admittance of the member whose opinion is minimal. Procedures with similar effects are used in practice by conservative and professional organisations such as the inner-circle of an organization for preserving a traditional craft. In these cases the opinion parameter represents a certain professional standard and the veto is usually exercised by supervising committee. In \cite{social}, it is shown that if the distribution of opinions is uniform, whenever $r>\frac12$, the radicalization effect of the quorum dominates the veto and the process radicalises to an atom at $1$, making the veto an ineffective method for preventing the group's opinions from radicalising upwards.

In this work we provide a rigourous analysis of the $r$-quantile admission process with veto power, which results in a
better understanding of the competition between the drift upwards caused by $r$, the high percentage of support required for admission,
and $\mu$, the natural distribution of the population. We show that the opinion profile of the club always converges weakly and that
while the drift induced by the quorum pressure persists for any distribution, it is never sufficiently strong to make the empirical $r$-quantile opinion of the group tend to infinity. In addition, our tools appear to be useful in the analysis of other quantile driven processes and we hope that they will find applications for other rules of admission and in opinion spaces of higher dimensions.
{In particular we believe them to be relevant for studying the $r$-quantile admission process without veto power. See Problem~\ref{prob: no veto} below.}

\medskip
\textbf{Mathematical interest.} The process we analyse is an example of a \emph{random walk in changing environment} (RWCE), that is, a stochastic process for which in every step the environment which determines the transition probabilities between possible states changes (in our case, the set of possible values of the $r$-quantile is effected by the history and the opinions of the current members of the group).
RWCEs were recently studied by Amir, Benjamini, Gurel-Gurevich and Kozma~\cite{ABGK}, by Avin, Kouck\'y and Lotker \cite{AKL}, by Dembo, Huang and Sidoravicuis \cites{DHS1, DHS2}, and by Redig and V\'ollering \cite{RV} to name but a few. It includes examples which were investigated for their own sake, such as reinforced random walk, bridge-burning random walk, loop-erased random walk, and self-avoiding walk with bond repulsion (see references within \cite{ABGK}).
Typically, one is interested in properties such as convergence and recurrence vs. transience of the process albeit the changing environment. As pointed out by several examples in \cite{ABGK}, these questions are not at all trivial.

While this paper carries out careful analysis of a certain family of RWCE (characterised by $r$ and the distribution of opinions in the population $\mu$), we believe that it may possibly be generalised to a larger family of RWCE that goes beyond quantile driven processes. One neat variant which could be addressed using our methods is a process in which at the ticks of a Poisson clock a point {is added} to the real line according to some absolutely continuous distribution supported on $\R$. A walker makes a simple random walk on these points at the ticks of an independent Poisson clock, moving from each point to one of its immediate neighbours. The probability that the random walk at point $x$ moves to its right neighbour depends on a continuous $f(x):\R\to[0,1]$. Our methods could be used to show that such a walk always converges either a point $x$ satisfying $f(x)=\frac12$, to $\infty$ if $\lim_{x\to\infty} f_x\ge\frac12$ or to $-\infty$ if $\lim_{x\to-\infty} f_x\le\frac12$.

In addition, this work provides an application to a general inequality relating the average of two i.i.d. random variables to their minimum \cite{FF}, demonstrating how it could be applied to control the recurrence of certain random processes.

\subsection{Outline and highlights}\label{sec:tools and methods}
In this section we survey the tools and methods used to obtain our results,
providing a rough outline of key steps in our proof. To make this outline more accessible we restrict ourselves here to the case of continuous $\mu$,
avoiding the difficulties that arise from handling measures with an atomic component.

The first step in proving the convergence of $\nu_t^*$ is its reduction in Section~\ref{sec: reduce} to
the convergence of two probabilities: $\Pro(X+Y\ge 2m)$ and $\Pro(X \ge m)$ for $X,Y$ i.i.d. $\mu$-distributed. To analyse the behaviour of these probabilities we make a time change to the process, examining it at times in which a new member is admitted, thus defining the empirical $q$-quantile process $m_k$.
We then provide four key lemmata (see Section \ref{sec: reduce}), showing that $m_k$ is bounded and classifying regions which it can visit only finitely many times. {
These lemmata are derived by studying the evolution of $m_k$
via a coupling with $y_k$, a certain random walk in changing environment, presented in Section~\ref{sec: tools}.
To define $y_k$ we regard the members of $\cS$ as
the points $\{1,2,\dots, k\}$ translated by some common integer shift $\Delta_k$.
Under this map, we let $y_k =rk+\Delta_k$ be the location of the empirical $r$-quantile after $k$ members have been admitted.
The translation $\Delta_k$ is chosen so that $\E(y_{k+1}-y_{k})=\rho(m_k)$ and the steps of $y_k$ are regular.
Using continuity properties of $\rho$
together with classical properties of drifting random walks}, we show in Section~\ref{sec: lems} that if $\rho(x)\neq 0$ then there exists an interval $I$ containing $x$
which $m_k$ can visit only finitely many times. Moreover, we show that if $\rho(x)>0$ then $m_k$ has positive probability to eventually stay above
$x$, while if $\rho(x)<0$ then $m_k$ has positive probability to eventually stay below $x$.
To conclude our proof we require two more ingredients. The first is an argument, which essentially shows that an unbiased simple random walk
on such an evolving lattice visits every point only finitely many times. This is used to show convergence of $\Pro(X+Y\ge 2m_k)$ and $\Pro(X \ge m_k)$ if $m_k$ ends up in a region
$I$ with $\mu(I)>0$ and $\rho(x)=0$ for all $x\in I$.
Finally to show that $m_k$ is bounded we use the following general comparison inequality established in \cite{FF}*{Theorem 1}.

\begin{thma}\label{thm: FF}
Let $X,Y$ be independent random variables on $\R_+$, which are not compactly supported. Then:
\begin{equation*}
\liminf_{m\to \infty} \frac{\Pro(\min(X,Y) > m)}{\Pro\left(\frac{X+Y}{2}\ge  m\right)} =0.
\end{equation*}
\end{thma}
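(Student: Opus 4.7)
The plan is a proof by contradiction. Write $f(m):=\Pro(X>m)$ and $g(m):=\Pro(Y>m)$, and suppose for the sake of contradiction that the liminf equals some $c>0$, so that
\[
f(m)g(m) \ge \tfrac{c}{2}\,\Pro(X+Y\ge 2m) \quad\text{for every }m\ge m_0.
\]
The strategy is to bootstrap this hypothesis into progressively sharper tail decay estimates on $f$ and $g$, until they conflict with an elementary lower bound on $\Pro(X+Y\ge 2m)$ that must persist because $\mu_X,\mu_Y$ have unbounded supports.

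The first step uses the trivial inclusion $\{X\ge 2m\}\cup\{Y\ge 2m\}\subseteq\{X+Y\ge 2m\}$, which gives $\Pro(X+Y\ge 2m)\ge \max(f(2m),g(2m))$. Substitution into the hypothesis yields
\[
f(2m),\;g(2m) \le \tfrac{2}{c}\,f(m)g(m).
\]
Since $f(m),g(m)\to 0$ by the unboundedness assumption, this forces $f(2m)/f(m)\to 0$ and similarly for $g$, and iterating along the geometric scale $m,2m,4m,\dots$ shows that $f$ and $g$ decay faster than any polynomial, while the product $fg$ decays doubly-exponentially.

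The second step sharpens the lower bound on $\Pro(X+Y\ge 2m)$ so as to capture off-diagonal contributions. Starting from
\[
\Pro(X+Y\ge 2m) = f(m)g(m)+\int_0^m g(2m-x)\,dF_X(x)+\int_0^m f(2m-y)\,dF_Y(y),
\]
valid up to the null event $\{\min=\max=m\}$, I would fix any $a>0$ with $F_X([0,a]),F_Y([0,a])\ge\delta>0$ (which exists as both are probability measures) and bound the integrals below by $\delta\,g(2m-a)$ and $\delta\,f(2m-a)$. Plugged back into the hypothesis this gives the strictly stronger inequality $f(2m-a),\,g(2m-a)\le \tfrac{2}{c\delta}f(m)g(m)$, and iterating the scale map $m\mapsto 2m-a$ drives $fg$ down along the corresponding subsequence at a rate exceeding any iterated-exponential.

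The main obstacle, and the last step, is to convert this rapid decay into an honest contradiction with the strict positivity of $f$ and $g$. One clean partial route is integration: since $\int_{m_0}^{\infty}\Pro(X+Y\ge 2m)\,dm=\tfrac12\E[(X+Y-2m_0)^+]$, the hypothesis combined with the doubly-exponential decay of $fg$ forces $\E X+\E Y<\infty$, so one must separately handle the case of finite first moment. I expect that case to require a delicate scale-selection argument that uses the freedom in the choice of $a$ to locate an $m$ at which the bootstrapped upper bound on $f(m)g(m)$ sits strictly below the lower bound coming from any bounded interval of positive $\mu$-mass; it is precisely the unboundedness (rather than merely the infinite extent) of the supports that prevents such an $m$ from being pushed off to infinity. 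Making this last step rigorous uniformly in atoms and asymmetries of $\mu_X,\mu_Y$ is the technical heart of the argument in \cite{FF}.
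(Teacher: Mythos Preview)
The paper does not prove Theorem~A at all; it is quoted verbatim from \cite{FF}*{Theorem~1} and used as a black box. So there is no ``paper's own proof'' against which to compare your sketch.

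That said, your proposal is not a proof either, and you acknowledge as much. Two concrete issues:

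\textbf{Step 2 has the inequality backwards.} You claim
\[
\int_0^m g(2m-x)\,dF_X(x) \ge F_X([0,a])\cdot g(2m-a).
\]
But $g$ is non-increasing, so for $x\in[0,a]$ one has $2m-x\ge 2m-a$ and hence $g(2m-x)\le g(2m-a)$; the quantity $g(2m-a)$ is the \emph{maximum} of the integrand on $[0,a]$, not the minimum. Integrating over $[0,a]$ only yields the lower bound $\delta\,g(2m)$, which is no improvement over Step~1. A correct route to $g(2m-a)$ is to integrate over $[a,m]$ and use $F_X([a,m])\to \Pro(X\ge a)>0$ (positive by unbounded support); equivalently, use the inclusion $\{X\ge a,\,Y\ge 2m-a\}\subseteq\{X+Y\ge 2m\}$ directly. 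So the step is repairable, but as written it is wrong, and the constant $\delta$ must be tied to $\Pro(X>a)$, not to $F_X([0,a])$.

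\textbf{The recursion from Step 2 does not outpace Step 1.} After the fix, you obtain $f(2m-a)g(2m-a)\le C'(f(m)g(m))^2$, which is the same quadratic recursion as in Step~1 up to the affine shift $m\mapsto 2m-a$. This still yields only doubly-exponential decay along the iterates, not ``a rate exceeding any iterated-exponential''. Consequently Step~2 does not bootstrap you past what Step~1 already gave, and the reduction to the finite-mean case via $\int\Pro(X+Y\ge 2m)\,dm<\infty$ already follows from Step~1 alone.

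\textbf{The main step is missing.} You explicitly defer the finite-mean case to ``a delicate scale-selection argument'' and to \cite{FF}. That is precisely the content of the theorem: doubly-exponential decay of $fg$ by itself is perfectly compatible with $f,g>0$ everywhere, so no contradiction has yet been reached. The actual proof in \cite{FF} is not a refinement of this bootstrap; it proceeds differently, and the scale-selection you allude to is the whole argument, not a tail-end detail.
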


The quotient here and in \eqref{eq: rho} are the same,
and both represent the probability that the opinion of the next admitted member is above or equal to the empirical $r$-quantile $m$.
Using this we show that there are sufficiently many points with $\rho(x)<0$ on $\R$ so that eventually $m_k$ will stay bounded below one of them.
Finally we use compactness arguments together with all of these observations to show convergence of $m_k$ to a non-empty interval $J$ satisfying
$\mu(J)=0$ and $\rho(x)=0$ for all $x\in J$.

Finally, in Section \ref{sec: exms} we utilise elementary tools to analyse the example mentioned in Proposition~\ref{prop: atom exm} and prove our criterion for uniqueness given in Proposition~\ref{prop: convex exm}.

\subsection{Open Problems}\label{sec:open problems}

We conclude the introduction with two open problems.
\begin{problem}
Does the $r$-quantile admission process with veto power converge for any distribution $\mu$ over $\R^d$ (including $d=1$), where the founder may hold any opinion in $\R^d$? What about other metric spaces?
\end{problem}

\begin{problem}\label{prob: no veto}
Does the $r$-quantile admission process without veto power converge for {all} $\mu,r$?
Does the \emph{drift to the extreme} phenomenon persist in this model?
\end{problem}

\begin{comment}
Finally, we ask for an extension which is perhaps less formidable but is of personal interest to us.
\begin{conj}
The probability measure of $\limsup_{k\to\infty}m_k$ is mutually absolutely continuous in $\mu$ in any open interval where $\rho(x)=0$.
\end{conj}
It would also be of interest to fully classify the distribution of $\limsup_{k\to\infty}m_k$ as we feel that it would complete
the analysis of the $r$-quantile admission process with veto power on $\R$ and $\R_+$.
\end{comment}

{It is also of interest to fully classify the distribution of $\nu_\infty$ in cases for which it is not deterministic, as this would
would complete
the analysis of the $r$-quantile admission process with veto power on $\R$.}

\section{From convergence of measures to convergence of quantiles}\label{sec: reduce}

In this section we reduce Theorems~\ref{thm: main} and~\ref{thm: determinism} to statements concerning with the evolution of the empirical $r$ quantile of the \emph{quantile admission process with veto power}.
All theorems are reduced to a couple of propositions concerning this process, which we further reduce to several technical statements. Tools for proving these statements are provided in Section~\ref{sec: tools} and their proofs are given in Section~\ref{sec: lems}.

\subsection{Notation}
The following notation is used throughout. For $A,B\subseteq \R$, denote
$A+B = \{a+b: \ a\in A, b\in B\}$. We write $a>B$ if $a>b$ for every $b\in B$
and $A>B$ if $a>B$ for every $a\in A$. We follow the convention that the infimum of  an empty set is infinity. We employ the abbreviations \emph{i.o.} for
\emph{infinitely often}, \emph{a.e.} for \emph{almost everywhere} and  \emph{a.s.} for \emph{almost surely}.

%\subsection{A simple reduction}
Throughout, we fix the probability measure $\mu$ and the quantile $r$, and let
$X$ and $Y$ be two independent $\mu$ distributed random variables.
We define {the events}
\vspace{-2pt}
\begin{align*}
D_m&:={\left\{\frac{X+Y}{2}\ge m\right\}}
&D_{m+}&:={\left\{\frac{X+Y}{2}> m\right\}}\\
F_m&:=\Pro\left(D_m\right)=\mu*\mu([2m,\infty))
&F_{m+}&:=\Pro\left(D_{m+}\right)=\mu*\mu((2m,\infty)).
\end{align*}

We consider our process mostly in times when a member is admitted to the club. To this end,
we define a random sequence $\{x_k\}_{k\in \N_0}$ of the opinions of club members
by order of admission. We further denote the multiset $S_k=\{x_0,\dots, x_{k-1}\}$ and define
the lower $r$-quantile of the set $S_k$ by
\begin{equation}\label{eq: mk def}
m_k=\min \Big\{x\in\R_+\ :\ |\{j < k\ :\ x_j\le x \}|\ge r k\Big\}=\sup\Big\{x\in\R_+\ : \ |\{j< k: \ x_j<x\}|<rk\Big\}.
\end{equation}
%or $\min\{x: \nu_t^*([0,x])\ge r\}$
Observe that the distribution of $x_k$ is that of $\min(X,Y)$
conditioned on $D_{m_k}=\{\frac{X+Y}{2}\ge m_k\}$.

We denote $\cF_k$ for the sigma algebra generated by $x_0,\dots,x_{k}$
{and write $\nu_k$ for the empirical distribution of opinions in $S_{k+1}=\{x_0,\dots,x_k\}$,
given by $\frac1{|S_{k+1}|}\sum_{s\in S_{k+1}}\delta_s$.}
By the definition of
$\nu_t$, the measures $\nu_t$ and $\nu^*_t$ converge
together and to the same limit. For this reason, the reader
need not be alarmed by the visual similarity between $S_k$ and
$\cS_t$ (used in the introduction) as the latter will play no role in the remainder of the paper.

To simplify the treatment of $\rho$ we introduce
\begin{equation}\label{eq: rho plus}
\rho^+(m)=\rho^+_{\mu,r}(m):= \lim_{q\to m-}\rho_{\mu,r}(q).
\end{equation}
Notice that if $\mu_{\max}<\infty$ then  
\begin{equation}
\rho^+(m)=\frac{\mu\big([m,\infty)\big)^2}{\mu*\mu\big([2m,\infty)\big)}-(1-r).
\end{equation}
We observe the following basic properties of $\rho$ and $\rho+$.

\begin{obs}\label{obs: rho contin}
$\rho$ is lower-semi-continuous and $\rho^+$ is left-continuous.

$\rho(x)\le \rho^+(x)$ for all $x$. Moreover $\rho(x)<\rho^+(x)$ if and only if $\mu(\{x\})>0$.

$\rho(x) \le \lim_{t\to x+}\rho(t)$ for all $x$. Moreover $\rho(x)<\lim_{t\to x+}\rho(t)$ if and only if 
$\mu*\mu(\{2x\})>0$.
\end{obs}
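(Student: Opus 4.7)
The plan is to reduce all three assertions to one-sided continuity of four monotone auxiliary functions. Let $p(m)=\mu((m,\infty))$, $p^+(m)=\mu([m,\infty))$, $g(m)=\mu*\mu([2m,\infty))$, and $g^-(m)=\mu*\mu((2m,\infty))$, so that
\[
\rho(m)=\frac{p(m)^2}{g(m)}-(1-r),\qquad \rho^+(m)=\frac{p^+(m)^2}{g(m)}-(1-r);
\]
the second formula matches the expression displayed in the excerpt, obtained by letting $q\uparrow m$ in the definition of $\rho^+$. Standard continuity of measures along monotone sequences of sets shows that $p$ and $g^-$ are right-continuous, while $p^+$ and $g$ are left-continuous, with jump sizes $p^+(m)-p(m)=\mu(\{m\})$ and $g(m)-g^-(m)=\mu*\mu(\{2m\})$. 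On the domain $m<\mu_{\max}$ one has $p(m)>0$, and a fortiori $g^-(m)\ge p(m)^2>0$, so all quotients below are well-defined and finite.

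The left-continuity of $\rho^+$ is then immediate, as $p^+$ and $1/g$ are positive and left-continuous. For the lower-semi-continuity of $\rho$, I would compute the two one-sided limits at an arbitrary $m_0<\mu_{\max}$:
\[
\lim_{q\uparrow m_0}\rho(q)=\frac{p^+(m_0)^2}{g(m_0)}-(1-r)=\rho^+(m_0),\qquad \lim_{q\downarrow m_0}\rho(q)=\frac{p(m_0)^2}{g^-(m_0)}-(1-r),
\]
using the one-sided continuity of the four building blocks. Since $p^+(m_0)\ge p(m_0)$ and $g^-(m_0)\le g(m_0)$, both right-hand sides are at least $\rho(m_0)$, which yields $\liminf_{q\to m_0}\rho(q)\ge \rho(m_0)$.

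The quantitative comparisons in parts two and three fall out of the same formulas. The identity $\rho^+(m_0)-\rho(m_0)=(p^+(m_0)^2-p(m_0)^2)/g(m_0)$ is nonnegative and, since $g(m_0)>0$ and $p(m_0)+p^+(m_0)>0$, is strict exactly when $p^+(m_0)>p(m_0)$, i.e.\ when $\mu(\{m_0\})>0$. Similarly, $\lim_{t\to m_0+}\rho(t)-\rho(m_0)=p(m_0)^2\bigl(1/g^-(m_0)-1/g(m_0)\bigr)$ is nonnegative and, using $p(m_0)>0$, is strict exactly when $g(m_0)>g^-(m_0)$, i.e.\ when $\mu*\mu(\{2m_0\})>0$. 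There is no serious obstacle here: the only point demanding care is pairing each of the four monotone auxiliaries with the correct side of continuity and remembering that positivity of $p(m_0)$ on the domain is what sharpens the second strict characterization.
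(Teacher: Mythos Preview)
Your proof is correct. The paper states this observation without proof, so there is nothing to compare against; your decomposition of $\rho$ and $\rho^+$ into the four monotone auxiliaries $p,p^+,g,g^-$, together with their one-sided continuity and the positivity $g^-(m)\ge p(m)^2>0$ on $m<\mu_{\max}$, cleanly yields all three assertions.
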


\begin{obs}\label{obs: rho jumps at atom}
If $\mu(\{x\})>0$, then $\rho(x)<\lim_{y\to x+}\rho(y)$.
\end{obs}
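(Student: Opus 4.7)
The plan is to reduce Observation \ref{obs: rho jumps at atom} directly to the last clause of Observation \ref{obs: rho contin}, which already provides a necessary and sufficient condition for the jump $\rho(x) < \lim_{y \to x+}\rho(y)$: namely, that $\mu * \mu(\{2x\}) > 0$. Hence everything comes down to verifying that an atom of $\mu$ at $x$ forces $\mu * \mu$ to have an atom at $2x$.

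This last verification is immediate. Let $X, Y$ be independent and $\mu$-distributed. Then the event $\{X = x\} \cap \{Y = x\}$ is contained in $\{X + Y = 2x\}$, so
\[
\mu * \mu(\{2x\}) \;=\; \Pro(X + Y = 2x) \;\ge\; \Pro(X = x)\,\Pro(Y = x) \;=\; \mu(\{x\})^2 \;>\; 0,
\]
using the hypothesis $\mu(\{x\}) > 0$ in the last step. Combined with the biconditional in Observation \ref{obs: rho contin}, this yields $\rho(x) < \lim_{y \to x+}\rho(y)$, as required.

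There is no real obstacle: the observation is a direct specialisation of the previous one to the atomic case, and its sole purpose is to record in a convenient form the (trivial) implication $\mu(\{x\}) > 0 \Rightarrow \mu * \mu(\{2x\}) > 0$, so that it can be invoked without further comment in the later sections.
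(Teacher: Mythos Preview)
Your argument is correct and is exactly the natural justification: the paper does not supply a proof for this observation (it is simply stated), and the intended reasoning is precisely to note that an atom of $\mu$ at $x$ forces an atom of $\mu*\mu$ at $2x$, after which the last clause of Observation~\ref{obs: rho contin} applies.
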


\begin{obs}\label{obs: rho monotone for mu null}
$\rho$ and $\rho^+$ are monotone increasing on any interval $I$
satisfying $\mu(I)=0$.
\end{obs}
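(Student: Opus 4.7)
The plan is to unpack the explicit formulas. Write
\[
\rho(m) = \frac{\mu\big((m,\infty)\big)^2}{\mu*\mu\big([2m,\infty)\big)} - (1-r), \qquad
\rho^+(m) = \frac{\mu\big([m,\infty)\big)^2}{\mu*\mu\big([2m,\infty)\big)} - (1-r),
\]
the second identity being valid for any $m<\mu_{\max}$ since $(q,\infty)\downarrow [m,\infty)$ and $[2q,\infty)\downarrow [2m,\infty)$ as $q\uparrow m$, so continuity of measure from above evaluates the left-limit in \eqref{eq: rho plus} exactly as above. The statement then reduces to showing that on any interval $I$ with $\mu(I)=0$, the numerator is constant in $m\in I$ while the denominator is non-increasing.

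For the numerators, fix $m<m'$ both in $I$. Since $I$ is an interval containing both points, $[m,m']\subseteq I$, hence $\mu([m,m'])=0$. In particular $\mu((m,m']) = \mu([m,m')) = 0$, and we obtain
\[
\mu\big((m,\infty)\big) = \mu\big((m',\infty)\big) \quad\text{and}\quad \mu\big([m,\infty)\big) = \mu\big([m',\infty)\big).
\]
For the denominator, the inclusion $[2m',\infty)\subseteq [2m,\infty)$ immediately yields $\mu*\mu([2m,\infty)) \ge \mu*\mu([2m',\infty))$, with no hypothesis on $\mu$. Since $m<\mu_{\max}$ forces $\mu((m,\infty))>0$ and hence $\mu*\mu([2m,\infty))>0$, the common denominator is strictly positive throughout $I$. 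Dividing a constant positive numerator by a weakly decreasing positive denominator gives $\rho(m)\le\rho(m')$ and $\rho^+(m)\le\rho^+(m')$.

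There is no substantive obstacle here: the statement is a bookkeeping consequence of monotonicity of measures under inclusion combined with the fact that a $\mu$-null interval contributes nothing to the numerator. The only point worth a sentence of care is the explicit left-limit identity for $\rho^+$, which is why the formula in \eqref{eq: rho plus} is recorded right before the observation.
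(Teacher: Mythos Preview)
Your proof is correct. The paper states this as an observation without proof, and your argument---that the numerators $\mu((m,\infty))^2$ and $\mu([m,\infty))^2$ are constant on a $\mu$-null interval while the common denominator $\mu*\mu([2m,\infty))$ is non-increasing---is exactly the routine computation the reader is expected to supply.
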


\begin{obs}\label{obs: lim at -infty}
{$\lim_{q\to -\infty}\rho(q) = \lim_{q\to-\infty}\rho^+(q) = r$.}
\end{obs}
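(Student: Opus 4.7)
The plan is to evaluate both limits directly from the defining formula
\[
\rho(q) = \frac{\mu((q,\infty))^2}{\mu*\mu([2q,\infty))}-(1-r).
\]
As $q\to-\infty$, the events $\{X>q\}$ and $\{X+Y\ge 2q\}$ are each monotonically increasing (as $q$ decreases) to the full sample space, since $X$ and $X+Y$ are almost surely finite. By continuity of probability, $\mu((q,\infty))\to 1$ and $\mu*\mu([2q,\infty))\to 1$, so the quotient tends to $1$ and $\lim_{q\to-\infty}\rho(q)=1-(1-r)=r$. In particular, if $\mu_{\min}>-\infty$ then for all $q<\mu_{\min}$ the identity $\rho(q)\equiv r$ holds exactly, which makes the statement immediate in the compactly-supported case.

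For $\rho^+$, I would use its definition $\rho^+(m)=\lim_{q\to m-}\rho(q)$. Given $\varepsilon>0$, fix $Q$ such that $|\rho(q)-r|<\varepsilon$ for every $q<Q$ (possible by the previous paragraph). Then for any $m<Q$, the left-limit $\rho^+(m)$ is the limit of values $\rho(q)$ with $q<m<Q$, and each such value lies in $[r-\varepsilon,r+\varepsilon]$; hence $\rho^+(m)\in[r-\varepsilon,r+\varepsilon]$ as well. This yields $\lim_{q\to-\infty}\rho^+(q)=r$. No step is expected to cause difficulty — the observation is simply the evaluation of $\rho$ at the trivial end of its range, where both numerator and denominator saturate at the total mass.
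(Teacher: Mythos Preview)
Your proof is correct. The paper states this observation without proof, treating it as self-evident; your direct evaluation of the limit from the definition is precisely the intended verification, and handles both the case $\mu_{\min}>-\infty$ (where $\rho\equiv r$ below the support) and the unbounded case.
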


\subsection{Reduction of Theorems \ref{thm: main} and \ref{thm: determinism}}

We being by reducing Theorem~\ref{thm: main} to the following proposition, whose proof we postpone to Section~\ref{subs:pf of main prop}.
 \begin{prop}\label{prop: main}
Almost surely there exists a finite $m\in[0,\mu_{\max}]$ such that the sequence $F_{m_k}$ converges to a limit $F_{\ell}$, for $\ell\in\{m,m+\}.$
Moreover,
\noindent
\begin{align*}
&\text{if $\ell = m<\mu_{\max}$, then $\rho(m)\le 0 \le \rho^+(m)$,}\\
&\text{if $\ell = m+<\mu_{\max}$, then $\rho(m)\le 0 =\lim_{q\to m+}\rho(q)$,}\\[-6pt]
\text{while }&\text{if $m=\mu_{\max}$, then $\rho^+(\mu_{\max})\ge 0$.}
\end{align*}
\end{prop}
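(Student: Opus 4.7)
Following the roadmap of Section~\ref{sec:tools and methods}, my plan is to reduce the proposition to four lemmata about the empirical $r$-quantile sequence $m_k$ and to combine them via a compactness argument. The lemmata, to be proved in Section~\ref{sec: lems} via the coupling of Section~\ref{sec: tools} with the changing-environment random walk $y_k = rk + \Delta_k$ (which satisfies $\E[y_{k+1}-y_k\mid \cF_k] = \rho(m_k)$ with regular steps), are: \emph{(L1)} the sequence $\{m_k\}$ is almost surely bounded in $[0,\mu_{\max}]$; \emph{(L2)} whenever $\rho(x)\ne 0$ there is an open interval $I \ni x$ that $m_k$ visits only finitely often a.s.; \emph{(L3)} if $\rho(x)>0$ (respectively $\rho(x)<0$), then with positive conditional probability $m_k$ eventually lies above (respectively below) $x$; and \emph{(L4)} on any interval of positive $\mu$-measure on which $\rho\equiv 0$, $m_k$ visits each point only finitely often a.s. The key input for (L1) is Theorem~\ref{thm: FF}, which furnishes arbitrarily large thresholds $m$ with $\rho(m)<0$; these act, via (L3), as soft upper barriers.

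Given (L1)--(L4), I argue as follows. By (L1) the accumulation set $A$ of $\{m_k\}$ is a.s.\ a nonempty compact subset of $[0,\mu_{\max}]$. Using (L2), (L4) and the monotonicity/continuity properties of $\rho$ and $\rho^+$ from Observations~\ref{obs: rho contin}--\ref{obs: rho monotone for mu null}, I reduce $A$ to a single point $m$: two distinct accumulation points would bracket an interval which is $\mu$-null by Observation~\ref{obs: rho monotone for mu null}, after which (L4) applied to any $\mu$-positive neighborhood of a point of $A$ inside that bracket contradicts both endpoints lying in $A$. If $m = \mu_{\max}$, case~(iii) of the proposition holds and $\rho^+(\mu_{\max})\ge 0$ follows from (L2) applied to every $q<\mu_{\max}$ with $\rho(q)<0$. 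Otherwise $m_k$ must eventually lie on one side of $m$ --- a one-sided behavior inherited from (L3) together with a standard zero--one argument using the regular step structure of $y_k$. The two possibilities for that side match cases (i) and (ii) of the proposition via the left-continuity of $F_q = \mu\ast\mu([2q,\infty))$: $F_{m_k}\to F_m$ when $m_k\le m$ eventually (case~(i)), while $F_{m_k}\to F_{m+}$ when $m_k > m$ eventually (case~(ii)). The additional constraint $\lim_{q\to m+}\rho(q)=0$ in case~(ii) follows from (L2) applied to each right-neighborhood of $m$ combined with lower-semi-continuity of $\rho$, while the bound $\rho(m)\le 0\le \rho^+(m)$ in case~(i) is read off from Observation~\ref{obs: rho contin} together with (L2) and (L3) at $m$.

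\emph{Main obstacle.} The crux of the proof is the zero-drift recurrence lemma (L4). Because the lattice on which $y_k$ lives evolves with admissions --- the reference shift $\Delta_k$ can itself drift when $m_k$ lingers in a $\mu$-positive region where $\rho\equiv 0$ --- the classical return-time analysis for symmetric simple random walks on $\Z$ does not apply directly. A tailored martingale argument on the drift-compensated process $y_k-rk$, combined with a careful accounting of how $\Delta_k$ evolves in a zero-drift zone, will be required; this is precisely the kind of RWCE estimate that motivates the framework developed in Section~\ref{sec: tools}.
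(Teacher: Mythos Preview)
Your plan follows the paper's architecture (boundedness, drift lemmata, zero-drift lemma, compactness), but the compactness step contains a genuine gap, and it stems from aiming at a stronger conclusion than Proposition~\ref{prop: main} actually asserts.

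\textbf{The target is $F_{m_k}$, not $m_k$.} You set out to show that the accumulation set $A$ of $\{m_k\}$ is a single point. The paper does \emph{not} prove this, and in the zero-drift regime it may well fail: when $\rho\equiv 0$ on an interval of positive $\mu$-measure, the argument in Section~\ref{sec: no drift} only traps $m_k$ inside a subinterval $I_1$, not at a point. What the paper proves is that $F_{m_k}$ converges even when $m_k$ has several subsequential limits $m'\le m$: since $(m',m)$ cannot contain a barrier, Corollary~\ref{cor: measure is barrier} forces $F_{m'}=F_m$, so $F$ is constant across the accumulation set. Your reduction ``$A=\{m\}$'' is both unnecessary and unproved.

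\textbf{The deduction is circular / unjustified.} The sentence ``two distinct accumulation points would bracket an interval which is $\mu$-null by Observation~\ref{obs: rho monotone for mu null}'' inverts the observation: \ref{obs: rho monotone for mu null} says $\rho$ is monotone on $\mu$-null intervals, not that intervals on which $\rho$ is constant are $\mu$-null. And your (L4), as stated (``$m_k$ visits each point only finitely often''), is strictly weaker than what is needed and does not yield a contradiction: a sequence can visit every individual point finitely often and still have an entire interval of accumulation points. The correct zero-drift statement is the paper's Lemma~\ref{lem: zero drift barrier}: such an interval almost surely contains a \emph{barrier}, i.e.\ a point that $m_k$ eventually stops crossing. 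That is what rules out accumulation points on both sides.

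\textbf{The one-sided/``moreover'' step is also underspecified.} Your appeal to ``(L3) together with a standard zero--one argument'' does not cover the case $\rho(m)=0=\rho^+(m)$, where (L3) is silent. The paper handles the trichotomy not by a zero--one law but by building, for each $x$ with $\rho(x)\ne 0$ or $\rho^+(x)\ne 0$ or $\lim_{t\to x+}\rho(t)\ne 0$, an open \emph{separator} interval around $x$ (Lemmata~\ref{lem: right separators}--\ref{lem: left separators}), and then using second-countability to cover the bad sets $S^+,S^-$ and the atom set $A$ by countably many separators that $m$ almost surely avoids. The distinction between $F_\ell=F_m$ and $F_\ell=F_{m+}$ is then read off from which side the separators lie on.

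In short: keep (L1)--(L3), strengthen (L4) to a barrier statement, and replace the ``$A$ is a point'' argument by the paper's ``$F$ is constant on $A$'' argument via Corollary~\ref{cor: measure is barrier}.
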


\begin{proof}[Proof of Theorem~\ref{thm: main}]
For $s\in[0,\infty)$ denote $B_s= \{(x,y) :\ \min(x,y)>s \}$ and $D_s$ as above.
%$$A_{m_k} = \left\{(x,y) : \ \frac{x+y}2\ge m_k\right\}\qquad\text{ and }\qquad B_s= \{(x,y) :\ \min(x,y)>s \}.$$
Observe that for any $s,m'\ge 0$ we have,
\begin{equation}\label{eq: pk def}
p_{s,k}:=\Pro\left(x_k>s\right)=\Pro\left(\min(X,Y)>s \,\middle|\, \frac{X+Y}{2}\ge m_k\right)=
\frac{\Pro\big((X,Y)\in D_{m_k}\cap B_s\big)}{\Pro\left((X,Y)\in D_{m_k}\right)}.
\end{equation}

By Proposition~\ref{prop: main} we obtain that $\lim_{k\to\infty}\Pro\big(D_{m_k}\big)=\lim_{k\to\infty} F_{m_k}$
almost surely exists and is of the form $F_{\ell}$ for
$\ell \in \{m,m+\}$.
Since $\{D_s\}_{s\in\R}$ are ordered by inclusion, we have $\ind(D_{m_k})\xrightarrow[k\to\infty]{\mu}\ind(D_\ell)$. Hence for any measurable set $B\subset \R_+^2$ we have
\begin{equation}\label{eq: limit form}
\ind\big( D_{m_k}\cap B\big)=\ind(D_{m_k})\ind(B)\xrightarrow[k\to\infty]{\mu}\ind (D_\ell)\ind(B)=
\ind\big( D_\ell \cap B\big).
\end{equation}
In particular the numerator of \eqref{eq: pk def} converges as $k$ tends to infinity, almost surely.
To show convergence of right-hand-side of \eqref{eq: pk def}, let us consider separately the case $F_m=0$ and the case
$F_m>0$.

In the case $F_m=0$, we have
$m_k\overset{\text{a.s.}}{\to}\inf\{s\ :\ \Pro\big((X+Y)/2\ge s\big)=0\}=\mu_{\max}$.
By Proposition~\ref{prop: main}, $m_k$ is almost surely bounded and hence
$\mu_{\max}<\infty$. By definition, the distribution of $x_{k}$ is supported on $[2m_k-\mu_{\max},\mu_{\max}]$,
so that $\nu_k$ converges weakly
to $\delta_{\mu_{\max}}$. By the moreover part of Proposition~\ref{prop: main}, this can only occur if condition \ref{item:max criterion} in Theorem~\ref{thm: main} is satisfied.

In the case $F_m>0$ we have, almost surely, for all $s\ge0$,
%since both the numerator and the denominator of \eqref{eq: pk def} converge and
$$\lim_{k\to\infty}p_{s,k} = \frac{\Pro\big((X,Y)\in D_{\ell}\cap B_s\big)}{\Pro\left((X,Y)\in D_{\ell}\right)}.$$
We conclude that $x_k$ converges weakly, so that, by the law of large numbers $\nu_k$ converges weakly to a limiting distribution $\nu_\infty$. If $\ell=m<\mu_{\max}$, then, by the moreover part of Proposition~\ref{prop: main} the condition $\rho(m)\le 0 \le \rho^+(m)$ is satisfied, so that, considering the limit distribution implied by \eqref{eq: pk def} and \eqref{eq: limit form}, we obtain that item \ref{item:m criterion} of Theorem~\ref{thm: main} holds. Similarly, if $\ell=m+<\mu_{\max}$, then by the moreover part of Proposition~\ref{prop: main} the conditions $\rho(m)\le 0 \le \lim_{q\to m-}\rho(q)$ and $\lim_{q\to m+}\rho(q)=0$ are satisfied, so that, using
\eqref{eq: pk def} and \eqref{eq: limit form},
item \ref{item:m plus criterion}  of Theorem~\ref{thm: main} holds.
Finally, if $m=\mu_{\max}$ then, by similar arguments, item \ref{item:max criterion} of Theorem~\ref{thm: main} holds.
\end{proof}

Next we obtain {the sufficiency of the criterion in Theorem~\ref{thm: determinism}} as a corollary of Theorem~\ref{thm: main}.
\begin{proof}[Proof of the {sufficiency criterion in Theorem~\ref{thm: determinism}}.]
%By definition $\rho(0)=r$ and $\rho_{\mu}(x)<r$ for all $x$.
Writing $S_-$ and $S_+$ for the support of the negative and positive parts of $\rho$  respectively, we obtain from the strict monotonicity of $\rho$ that either
$S_+=\emptyset$, or there exists a unique $m'$ such that $\sup S_+=\inf S_-=m'$.

If $S_+$ is empty, then both conditions for items \ref{item:m criterion} and \ref{item:m plus criterion} in Theorem~\ref{thm: main} cannot hold for any $m$, and hence item \ref{item:max criterion} of the theorem must hold, so that $\nu^*_t$ converges to $\delta_{\mu_{\max}}$, as required. Otherwise, $\sup S_+=\inf S_-=m'$, so that, by Theorem~\ref{thm: main}, $\nu^*_t$ converges to $\nu_\infty$ such that
$\nu_\infty((s,\infty))$ is proportional to either $\Pro\left( X>s,\, Y> s ,\, X+Y\ge 2m\right)$
or
$\Pro\left( X>s,\, Y> s,\, X+Y> 2m\right)$.
%$d\nu_\infty$ is proportional to either $d\mu(x) \mu\big([x,\infty)\cap [2m-x,\infty)\big)$ or $d\mu(x) \mu\big([x,\infty)\cap (2m-x,\infty)\big)$.
Since, by definition, $\rho(m)$ is a translation of the quotient of two monotone decreasing functions, where the numerator is right continuous and the denominator is left continuous, it can be monotone decreasing itself only if the denominator is continuous, that is,
$\Pro(X+Y\ge 2m)=\Pro(X+Y>2m)$ for every $m$.
Hence also $\Pro\left( X>s,\, Y> s ,\, X+Y\ge 2m\right) =  \Pro\left( X>s,\, Y> s,\, X+Y> 2m\right)$
so that $\nu_\infty$ is uniquely determined and explicit, as required.
\end{proof}
In order to establish {the necessity of the criterion in Theorem~\ref{thm: determinism}} we use the following proposition, whose proof we postpone to Section~\ref{subs:pf of main non-det}.

\begin{prop}\label{prop: non-determinism}
For any $x\in\R_+$, if $\rho(x)<0$ then $\Pro(m_k\le x\ \text{a.e.})>0$, while if $\rho(x)>0$ then $\Pro(m_k>x\ \text{a.e.})>0$.
Moreover, if $a<b$ are such that $\mu([a,b])>0$ and $\rho(x)=0$ for all $x\in[a,b]$, then $\Pro(m_k\in [a,b] \text{ a.e.})>0$.
\end{prop}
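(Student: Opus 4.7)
The proof rests on the coupling of $m_k$ with the random walk in changing environment $y_k = rk + \Delta_k$ introduced in Section~\ref{sec:tools and methods}, whose one-step drift equals $\rho(m_k)$, together with the key lemmata of Section~\ref{sec: reduce} (to be established below) classifying regions that $m_k$ visits only finitely often. The first two claims of the proposition are essentially restatements of those lemmata: if $\rho(x)<0$, then lower-semi-continuity of $\rho$ (Observation~\ref{obs: rho contin}) together with its jump behavior at atoms of $\mu$ (Observation~\ref{obs: rho jumps at atom}) produce an interval on which the coupled walk $y_k$ has uniformly negative drift, and a classical gambler's-ruin estimate yields positive probability that $m_k$ remains eventually below $x$. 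The case $\rho(x)>0$ is symmetric.

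For the moreover part, assume $\mu([a,b])>0$ and $\rho\equiv 0$ on $[a,b]$. My plan is two-step: (i) show that with positive probability $m_k$ enters some sub-interval $(a',b')\subset(a,b)$ at a finite time $k_0$, and (ii) show that, conditional on $m_{k_0}\in (a',b')$, the process $m_k$ remains in $[a,b]$ forever with positive probability. Step~(i) is straightforward: since $\mu([a,b])>0$, at each round a candidate with opinion in $[a,b]$ is admitted with uniformly positive probability, so a direct first-passage argument shows that $m_k$ enters $(a',b')$ within finitely many steps with positive probability, from any starting configuration.

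Step~(ii) is the heart of the argument. While $m_k\in[a,b]$, the drift $\rho(m_k)=0$ vanishes, so the coupled $y_k$ is a martingale whose step size is bounded in the shifted coordinates. Translating to the original scale, one increment of $m_k$ has magnitude $O(1/k)$: adding one member to $k$ existing ones shifts the $r$-quantile by at most one order statistic, and the local density of members near $m_k$ is bounded below by a positive constant (via a law-of-large-numbers argument using $\mu([a',b'])>0$). Consequently the stopped process $m_{k\wedge\tau}$, with $\tau$ the first exit time from $[a,b]$, has summable quadratic variation; it is therefore an $L^2$-bounded martingale and converges almost surely. Choosing $k_0$ large and $(a',b')$ well inside $(a,b)$, Doob's maximal inequality shows that the probability of exiting $[a,b]$ after time $k_0$ is strictly less than one, which gives $\Pro(\tau=\infty)>0$ and completes the proof.

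The main obstacle will be rigorously establishing the $O(1/k)$ step-size bound on $m_k$. This requires a uniform lower bound on the local density of admitted members' opinions near the moving quantile, which in turn rests essentially on $\mu([a,b])>0$: a law-of-large-numbers argument applied to admissions whose opinions fall in $[a',b']$ guarantees that a positive fraction of members accumulate there, from which the required density bound follows. Once this control is in hand, the martingale argument closing step~(ii) is standard.
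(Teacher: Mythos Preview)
Your treatment of the first two assertions is correct and matches the paper: both are immediate from the negative-drift and positive-drift lemmata (Lemmas~\ref{lem: right separators}\ref{lem: right separators: item barrier} and~\ref{lem: left separators}\ref{lem: left separators: item plus}).

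For the moreover part, however, step~(ii) contains a genuine gap. You correctly note that $y_k$ is a martingale while $m_k\in[a,b]$, but you then assert that ``the stopped process $m_{k\wedge\tau}$ \ldots\ is therefore an $L^2$-bounded martingale''. This does not follow. The martingale is $y_k$, not $m_k$; the two are related through $\{m_k\le x\}=\{y_k\le \psi_k^+(x)\}$, and the environment $\psi_k^+$ is itself random and evolving. Even with $\rho(m_k)=0$, the conditional expectation $\E[m_{k+1}-m_k\mid\cF_{k-1}]$ depends on the spacings of the order statistics above and below $m_k$, and there is no reason these should cancel. So the Doob/$L^2$-martingale machinery you invoke is being applied to a process that is not a martingale. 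There is also a circularity in the $O(1/k)$ step-size bound: the law-of-large-numbers control on the local density of members near $m_k$ requires that $m_k$ has already stayed in $[a,b]$ for order $k$ steps, which is exactly what you are trying to establish.

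The paper's argument (Lemma~\ref{lem: zero drift barrier}, proved in Section~\ref{sec: no drift}) works directly with the genuine martingale $y_k^+$ and circumvents both issues via a nested-shell construction. One subdivides $I_1\subset[a,b]$ into concentric shells $[\ell_j,r_j]$ of geometrically decreasing $\mu$-measure, and uses Azuma's inequality on $y_k^+$ to show that the time $t_{j+1}-t_j$ to escape shell $j$ is at least $N_j^{1.5}$ with high probability, where $N_j$ counts members already in shell $j{+}1$. During this time, Hoeffding's bound guarantees that $N_{j+1}\gtrsim 2^{-j}N_j^{1.5}$ members accumulate in shell $j{+}2$. This recursion forces $N_j$ to grow doubly exponentially, so $t_j\to\infty$ and $m_k$ never exits $I_1$. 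The key point is that the growth of the environment (i.e.\ of $\psi_k^+(r_j)-\psi_k(\ell_j)$) is bootstrapped shell by shell, rather than assumed via a global density bound.
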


\begin{proof}[Proof of the {necessity of the criterion in Theorem~\ref{thm: determinism}}]
We begin by showing the first part of the theorem. Recall that $\rho_{\mu,1}(0) = 1$ and $\liminf_{q\to\mu_{\max}}\rho_{\mu,1}(q)=0$ by Theorem \ref{thm: FF}.
Thus, under the assumption that $\rho_{\mu,1}(q)$ is not monotone,
%since $\rho(0)=r$ and $\rho(x)<r$ for all $x$,
there exist $x_{0}<x_{1}$ and $r\in[0,1]$
such that $\rho_{\mu,1}(x_0)<1-r<\rho_{\mu,1}(x_{1})$.
Since $\rho_{\mu,r}(q) = \rho_{\mu,1}(q)-(1-r)$ this means that $\rho_{\mu,r}(x_0)<0<\rho_{\mu,r}(x_1)$.
By Proposition~\ref{prop: non-determinism}, there is a positive probability that $m_k\le x_0$ for almost every $k$, in which case,
by Theorem \ref{thm: main}, $\nu_t$ converges to a measure $\nu_\infty$ of the form given in \ref{item:m criterion} or \ref{item:m plus criterion} for some $m\le x_0$.
 On the other hand, by Proposition \ref{prop: non-determinism} there is a positive probability that $m_k> x_1$
 for almost every $k$, in which case, by Theorem \ref{thm: main}, $\nu_t$ converges to a measure $\nu_\infty$ as in~\ref{item:m criterion} or~\ref{item:m plus criterion} with
$m\ge x_1$ or as in~\ref{item:max criterion}. As a measure cannot satisfy both requirements, we conclude that $\nu_\infty$ is non-deterministic.

Next, we show the moreover part. Suppose that $\rho$ vanishes on $[a,b]$ where $a<b$ and $\mu([a,b])>0$. By Observation \ref{obs: rho contin}, the interval $[a,b]$ contains no atoms of $\mu$. Hence we may restrict ourselves to the case $b<\mu_{\max}$, as otherwise we can apply our arguments to a subinterval of positive measure whose upper end satisfies this property. Observing that $F_s$ is monotone decreasing in $s$ and that $F_s>F_{s'}$ whenever $\mu([s,s'))>0$, we obtain the existence of $a<a'<b'<b$ such that $F_a>F_{a'}>F_{b'}>F_b$.
From Proposition~\ref{prop: main} we obtain that $F_{m_k}$ converges to a limit $F_\ell$.
By the second part of Proposition \ref{prop: non-determinism} both the event
$A=\{m_k\in [a,a']\text{ a.e.}\}$ and the event
$B=\{m_k\in [b',b]\text{ a.e.}\}$ have positive probability.
In the former case we obtain $F_\ell\ge F_{a'}$ while in the latter $F_\ell\le F_{b'}$. We conclude that the values of $F_\ell$ under the event $A$ and under the event $B$ must be almost surely distinct. Observing that in both cases $\nu_\infty([b,\infty))=\mu([b,\mu_{\max}))^2/F_\ell$, where the numerator is a non-zero constant, independent of $F_\ell$, we conclude that
$\nu_\infty([b,\infty))$ is not a constant random variable, as required.
\end{proof}

Thus we are left with proving Propositions~\ref{prop: main} and~\ref{prop: non-determinism}. In the next section we reduce these propositions to several technical statements.
%
%\begin{obs}\label{obs: m_k contin.}
%Let $i< j<\ell$ and suppose that $m_j<x_i$ and
%$x_i<m_\ell$. Then there exists $j<k<\ell$ such that $m_{k}=x_i$.
%\end{obs}

%We end this subsection with three useful observations about the drift.

\subsection{Properties of the empirical quantile process}
%of Proposition~\ref{prop: main} }
In this section we present several lemmata concerning with the empirical quantile process $m_k$ which will be of use in the proof of Propositions~\ref{prop: main} and~\ref{prop: non-determinism}.
We begin by introducing several definitions.
%We now state several key propositions about the quantile random walk.

\begin{defin}[Barrier]\label{def: bar}
A point $x\in\R$ is called a \emph{barrier} if there exists $k_0$ such that
either $m_k \le x$ for all $k\ge k_0$, or $m_k> x$ for all $k\ge k_0$.

 A point $x\in\R$ is called a \emph{right-barrier} if it is a.s. a barrier and
 $\Pro(\text{$m_k \le x$ a.e.})>0$. Similarly, $x\in\R$ is called a \emph{left-barrier} if it is a.s. a barrier and $\Pro(\text{$m_k> x$ a.e.})>0$.
\end{defin}

\begin{defin}[Separator]\label{def: sep}
%We call $x_0$ a \emph{left-barrier} if
%almost surely there exists $k_0$ such that either $m_k\le x_0$
%for all $k\ge k_0$, or $m_k> x_0$ for all $k\ge k_0$.
%We call $x_0$ a \emph{right-barrier} if

%Let $\{m_k\}_{k\ge1}\subset \R$ be a sequence and let $I\subset \R$ be an interval.
An interval $I$ is called a \emph{separator}
if there exists $k_0$ such that
either $m_k < I$ for all $k\ge k_0$, or $m_k>I$ for all $k\ge k_0$.

\noindent
 An interval $I$ is called a \emph{right-separator} if it is a.s. a separator and
$\Pro({\exists k_0:\:} \text{$m_k< I$ for all $k\ge k_0$})\!>\!0$. Similarly, $I$ is called a \emph{left-separator} if
 it is a.s. a separator and $\Pro({\exists k_0:\:} \text{$m_k> I$ for all $k\ge k_0$})>0$.
\end{defin}

%From now on, separators will always refer to the random sequence of quantiles $\{m_k\}$.

To prove our propositions, we require the following lemmata.

\begin{lem}[Boundedness]\label{lem:as bounded}
$m_k$ is almost surely bounded.
\end{lem}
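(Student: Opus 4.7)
The plan is to derive this lemma from Theorem~\ref{thm: FF} combined with the random walk coupling between $m_k$ and $y_k$ outlined in Section~\ref{sec:tools and methods}. We may assume $\mu_{\max}=\infty$, since otherwise $m_k\le\mu_{\max}<\infty$ trivially, and in that case the hypotheses of Theorem~\ref{thm: FF} are met by $X,Y\sim\mu$. The key input from that theorem is that its ratio equals $\rho(m)+(1-r)$ by \eqref{eq: rho}, hence
\[
\liminf_{m\to\infty}\rho(m)=-(1-r)<0,
\]
and there exists an increasing sequence $x_n\uparrow\infty$ with $\rho(x_n)<-\delta$ for some fixed $\delta>0$ (say $\delta=(1-r)/2$).

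Each such $x_n$ is meant to function as a barrier. The walk $y_k$ has conditional drift $\mathbb{E}[y_{k+1}-y_k\mid\mathcal{F}_k]=\rho(m_k)$ and uniformly bounded increments, so whenever $m_k$ visits a sufficiently small neighborhood of $x_n$ the walk accumulates strictly negative drift. A standard Azuma--Hoeffding-type exit estimate then shows that, starting from $m_k\le x_n$, the probability that $m_k$ crosses $x_n$ upward during an excursion of length $T$ near $x_n$ decays exponentially in $T$. Propagating this barrier argument along the sequence $(x_n)$ by the strong Markov property at the first hitting times of $x_n$ yields $\Pro(\sup_k m_k>x_n)\to 0$ as $n\to\infty$, which is equivalent to $\sup_k m_k<\infty$ almost surely.

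The main obstacle will be making the barrier step precise, and it splits into two technical ingredients. First, we need to extend the pointwise estimate $\rho(x_n)<-\delta$ to a small neighborhood of $x_n$ controlling the trajectory during the excursion; this is where the semicontinuity and monotonicity properties collected in Observations~\ref{obs: rho contin}--\ref{obs: rho monotone for mu null} come in, with the atomic case handled via Observation~\ref{obs: rho jumps at atom} (the jump of $\rho$ across an atom only strengthens the negativity on the right side of $x_n$). Second, we need the translation $\Delta_k$ linking the integer-indexed walk $y_k$ to the real-valued $m_k$ not to conspire against us: since $\Delta_k$ changes by only bounded amounts per admission, the regularity of the steps of $y_k$ promised in Section~\ref{sec:tools and methods} ensures that negative drift in $y_k$ translates faithfully into non-escape of the real-valued quantile $m_k$.
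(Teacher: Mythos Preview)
Your overall strategy matches the paper's: invoke Theorem~\ref{thm: FF} to produce infinitely many points where $\rho$ is uniformly negative, treat these as barriers, and conclude by a Borel--Cantelli argument. Two genuine gaps remain in the execution, however.

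First, extending the pointwise bound $\rho(x_n)<-\delta$ to a neighborhood is not as routine as you suggest. Since $\rho$ is only \emph{lower}-semicontinuous (Observation~\ref{obs: rho contin}), negativity at a point does not propagate to nearby points; lower-semicontinuity goes the wrong way for this. Observation~\ref{obs: rho monotone for mu null} does not help either, since on $\mu$-null intervals $\rho$ is \emph{increasing}, so to the right of $x_n$ it could rise above $-\delta$. The paper circumvents this by a quantitative construction: for each $b_j$ with $\rho^+(b_j)<-\tfrac{7}{8}(1-r)$, it chooses $a_j<b_j$ so that $\mu([a_j,b_j))/\mu([a_j,\infty))\in[\tfrac14,\tfrac12]$, and shows directly from the formula for $\rho$ that $\rho|_{[a_j,b_j]}<-\tfrac{1-r}{2}$. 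This computation is essential and does not follow from the qualitative observations you cite.

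Second, your conclusion that $\Pro(\sup_k m_k>x_n)\to 0$ is asserted without justification, and the ``strong Markov plus barrier'' sketch does not supply it. A single barrier with crossing probability $q<1$ gives no control on $\sup_k m_k$ unless the trapping probabilities are \emph{uniformly} bounded away from zero across all $n$. The paper's construction is designed precisely to ensure this: the $q$ in Proposition~\ref{prop: ell people} depends only on the drift bound $\ep$ and the ratio $\mu([a_j,b_j])/\mu([a_j,\infty))$, both uniform in $j$ by the choice of $a_j$. With uniformity in hand, the paper defines the successive times $t_j$ at which $m_k$ first exceeds $b_{n_{j-1}}$, shows $\Pro(t_{j+1}=\infty\mid t_j<\infty,\cF_{t_j-1})$ is bounded below, and concludes by the conditional Borel--Cantelli lemma. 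Finally, you omit the lower bound (relevant when $\mu_{\min}=-\infty$), which the paper handles separately via Observation~\ref{obs: lim at -infty} and Lemma~\ref{lem: aux left}.
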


\begin{lem}[Negative drift]\label{lem: right separators}
Let {$x_0 \le \mu_{\max}$}.
%$x_0\in (0,\mu_{\max})$.
\begin{enumerate}[nolistsep, label={\emph(\alph*\emph)},ref=\alph*]
\item\label{lem: right separators: item plus} If $\rho^+(x_0) < 0$,
then there exists $a< x_0$ such that $(a,x_0]$ is a right-separator.
\item\label{lem: right separators: item barrier} If $\rho(x_0) < 0$,
then $x_0$ is a right-barrier.
\item\label{lem: right separators: item right limit} If $\lim_{x\to x_0+}\rho(x) < 0$,
then there exists $b> x_0$ such that $(x_0,b)$ is almost surely a separator.
\end{enumerate}
\end{lem}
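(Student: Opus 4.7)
The plan is to combine the coupling of the empirical quantile $m_k$ with the RWCE $y_k$ from Section~\ref{sec: tools}, which is constructed so that $\E(y_{k+1}-y_k\mid \cF_k)=\rho(m_k)$ and has uniformly bounded increments, with the local sign information on $\rho$ provided by Observation~\ref{obs: rho contin}. The guiding principle is that on any interval where $\rho$ is uniformly bounded above by $-\epsilon<0$, a Foster--Lyapunov / supermartingale argument forces $m_k$ to visit the interval only finitely often almost surely, and moreover gives a bounded-below probability that an excursion through the interval exits downward and never returns.

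For part (a), I would use the left-continuity of $\rho^+$ at $x_0$ together with $\rho^+(x_0)<0$ to pick $a<x_0$ and $\epsilon>0$ with $\rho^+(q)<-\epsilon$ on $(a,x_0]$, then apply the pointwise bound $\rho\le\rho^+$ from Observation~\ref{obs: rho contin} to get $\rho<-\epsilon$ throughout $(a,x_0]$. The drift argument on the coupled walk then yields that $(a,x_0]$ is almost surely a separator. To upgrade to a right-separator, I would iterate: conditional on $m_k$ crossing $(a,x_0]$, the uniform negative drift gives a bounded-below probability of exit at the bottom, and applying the same principle on nested intervals $(a-j,x_0]$ for $j\ge 1$ to control the probability of a subsequent return, I obtain $\Pro(m_k\le a\ \text{a.e.})>0$.

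For part (b), I would split on the value of $\rho^+(x_0)$. If $\rho^+(x_0)<0$, part (a) delivers a right-separator $(a,x_0]$, and the dichotomy ``$m_k\le a$ eventually or $m_k>x_0$ eventually'', with positive probability on the first case, makes $x_0$ a right-barrier. The remaining case $\rho^+(x_0)\ge 0>\rho(x_0)$ forces $\mu(\{x_0\})>0$ by Observation~\ref{obs: rho contin}. Here each visit to $m_k=x_0$ experiences drift $\rho(x_0)<-\epsilon$, which with bounded-below probability pushes the process strictly below $x_0$; combined with a drift/separator analysis on the atom-free region just below $x_0$ (using monotonicity of $\rho$ on such regions, Observation~\ref{obs: rho monotone for mu null}, and reapplying part (a) at an auxiliary point $x'<x_0$ with $\rho^+(x')<0$ if needed), I would conclude that $x_0$ is a right-barrier.

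For part (c), since $\rho$ is lower-semi-continuous with $\lim_{x\to x_0+}\rho(x)<0$, the set $\{x:\rho(x)<-\epsilon\}$ is open and contains a right-neighbourhood of $x_0$ for some $\epsilon>0$, yielding $b>x_0$ with $\rho<-\epsilon$ on $(x_0,b)$. The drift argument then immediately gives that $(x_0,b)$ is almost surely a separator; no directional claim is asserted, so no further work is required. The principal obstacle across all three parts, and especially in the atomic subcase of (b), is that the increments of $m_k$ itself are not uniformly bounded (a single admission can shift the empirical quantile across several support values), so the Foster--Lyapunov bookkeeping must be carried out on $y_k$ and transferred to $m_k$ through the translation $\Delta_k$ built into the coupling.
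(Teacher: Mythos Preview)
Your high-level architecture (semicontinuity of $\rho,\rho^+$ to locate an interval of uniformly negative drift; supermartingale/Foster--Lyapunov control of the coupled walk $y_k$) is exactly the paper's, but you are missing one ingredient that the paper's argument hinges on and without which your proposed steps do not go through.

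\textbf{The missing ingredient: a positive-$\mu$-measure piece inside the drift interval.}
A uniform bound $\rho<-\epsilon$ on an interval $I$ does \emph{not} by itself force $I$ to be a separator. The mechanism that turns negative drift into a separator is Proposition~\ref{prop: ell people}: the probability that $m_k$ crosses $I$ upward decays like $q^{\nu_{t_0-1}(I)}$, i.e.\ exponentially in the number of admitted members with opinion in $I$. This requires $\mu(I)>0$ so that members accumulate there; by Claim~\ref{clm: I increase}, the width $|I_k|$ of the corresponding interval in $y$-space grows only when some $x_j$ lands in $I$. The paper therefore always splits the drift region as $[\alpha,\beta]\cup(\beta,\gamma]$ with $\mu([\alpha,\beta])>0$ (Lemma~\ref{lem: aux right}), using Observations~\ref{obs: rho monotone for mu null} and~\ref{obs: lim at -infty} to guarantee that such a positive-measure piece exists to the left whenever $\rho^+(x_0)<0$. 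Your parts (a) and (c) never secure such a piece, and your ``iterate on nested intervals $(a-j,x_0]$'' step for the right-separator upgrade is not the right mechanism; the paper instead applies Proposition~\ref{prop: ell people} at each re-entry time and then Borel--Cantelli.

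\textbf{The atomic subcase of (b).}
Your plan to look for an auxiliary $x'<x_0$ with $\rho^+(x')<0$ will not generally work: since $\rho^+$ is left-continuous and $\rho^+(x_0)\ge 0$, nearby points to the left also have $\rho^+\ge 0$. The paper's move is much simpler and is the one you should use: when $\rho(x_0)<0\le\rho^+(x_0)$, Observation~\ref{obs: rho contin} gives $\mu(\{x_0\})>0$, so the atom $\{x_0\}$ itself supplies the required positive-measure piece. One then applies Lemma~\ref{lem: aux right} with $\alpha=\beta=\gamma=x_0$, and Proposition~\ref{prop: ell people} with $[a,b]=\{x_0\}$ does the rest.
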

\begin{lem}[Positive drift]\label{lem: left separators}
Let {$x_0 < \mu_{\max}$}.
%$x_0\in [0,\mu_{\max})$.
\begin{enumerate}[nolistsep, label={\emph(\alph*\emph)},ref=\alph*]
\item\label{lem: left separators: item plus} If $\rho(x_0) > 0$,
then {there exist $a<x_0<b$ such that %$[x_0,b)$
$(a,b)$ is a left-separator}.
\item\label{lem: left separators: item barrier} If $\rho^+(x_0) > 0$,
then $x_0$ is almost surely a barrier.
\item\label{lem: left separators: item right limit} If $\lim_{x\to x_0+}\rho(x) > 0$,
then there exists $b> x_0$ such that $(x_0,b)$ is almost surely a separator.
\end{enumerate}
\end{lem}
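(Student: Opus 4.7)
The plan is to argue in parallel with Lemma~\ref{lem: right separators}, but in regions where the drift $\rho$ is positive rather than negative. The proof leans on the coupling of $m_k$ with the random walk in changing environment $y_k$ established in Section~\ref{sec: tools}, whose conditional increment at step $k$ has expectation $\rho(m_k)$.

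For part (a), lower semi-continuity of $\rho$ (Observation~\ref{obs: rho contin}) together with $\rho(x_0)>0$ yields $a<x_0<b$ on which $\rho(x)\ge \rho(x_0)/2=:c>0$. While $m_k\in (a,b)$, the coupled walk $y_k$ has conditional drift at least $c$. The positive-drift analog of the random-walk machinery used for Lemma~\ref{lem: right separators}(\ref{lem: right separators: item plus}), combined with boundedness of $m_k$ (Lemma~\ref{lem:as bounded}), gives on the one hand that $m_k$ visits $(a,b)$ only finitely often almost surely, and on the other hand that a path entering $(a,b)$ has a uniform positive probability of exiting through the upper boundary and remaining above $b$ forever thereafter. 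Hence $(a,b)$ is a left-separator.

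For parts (b) and (c), the hypotheses supply an interval $(a,x_0)$ (respectively $(x_0,b)$) on which $\rho>0$: use left-continuity of $\rho^+$ for (b), and the definition of a positive right limit for (c). Applying (a) at each interior point and covering any compact subinterval by finitely many left-separators shows that $m_k$ visits this interval only finitely often. The delicate step is to rule out oscillation across $x_0$ (respectively across $(x_0,b)$): by \eqref{eq: mk def}, consecutive values of $m_k$ are adjacent order statistics of the admitted opinions, so a jump from one side of the forbidden interval to the other requires landing inside that interval, which is eventually prohibited.

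The main obstacle, shared with the proof of Lemma~\ref{lem: right separators}, is precisely this final non-oscillation step. The random-walk estimate alone only shows that $m_k$ spends finite total time in a region of uniformly positive drift; upgrading this to the statement that $m_k$ eventually stays on one side requires the step-size control on $m_k$ provided by the coupling and the almost sure accumulation of admitted opinions in any region that is revisited infinitely often, both supplied by the tools of Section~\ref{sec: tools}.
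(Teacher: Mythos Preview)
Your sketch follows the right parallel with Lemma~\ref{lem: right separators}, but it misses the structural ingredient that makes the positive-drift argument work, and part (b) skips a genuine case distinction.

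\textbf{The missing positive-measure sub-interval.} For part (a), lower semi-continuity of $\rho$ does give an open interval around $x_0$ on which $\rho\ge c>0$, but this alone does not yield a left-separator. The claim that ``a path entering $(a,b)$ has uniform positive probability of\ldots\ remaining above $b$ forever thereafter'' is exactly what requires justification: once $m_k>b$ you have no drift information, so why should $m_k$ not drop back? In the paper this is handled by the auxiliary Lemma~\ref{lem: aux left}, which requires a sub-interval $[\beta,\alpha]$ to the \emph{right} of $x_0$ with $\mu([\beta,\alpha])>0$ and $\rho^+\ge\ep$. Members then accumulate in $[\beta,\alpha]$, and Proposition~\ref{prop: ell people rev} gives exponential decay of the probability of ever crossing back through it. The existence of such a positive-measure sub-interval is not automatic from semi-continuity; it comes from Observation~\ref{obs: rho monotone for mu null} ($\rho$ is increasing on $\mu$-null intervals) combined with $x_0<\mu_{\max}$, as in the paper's construction of $a$ via \eqref{eq: set a}. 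Your outline never establishes positive measure, so the ``stay above forever'' step is unsupported. Also note that the positive-drift machinery (Proposition~\ref{prop: ell people rev}, Lemma~\ref{lem: aux left}) is stated for $\rho^+$, the drift of $y_k^+$, not for $\rho$; your sketch conflates the two.

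\textbf{Part (b) requires a case split you omit.} Left-continuity of $\rho^+$ gives $\rho^+>0$ on a left-neighbourhood, but since $\rho\le\rho^+$ this does not give $\rho>0$ there, so you cannot simply invoke part (a). The paper's proof splits on the sign of $\rho(x_0)$: if $\rho(x_0)>0$ part (a) applies; if $\rho(x_0)<0$ one uses Lemma~\ref{lem: right separators}\ref{lem: right separators: item barrier} instead; and if $\rho(x_0)=0$ then $x_0$ is an atom of $\mu$ (Observation~\ref{obs: rho contin}), and Lemma~\ref{lem: aux left} is applied directly with $\alpha=\beta=\gamma=x_0$. Your covering-plus-step-size argument does not address the atom case, and the step-size claim (``a jump\ldots\ requires landing inside'') is only valid once a member's opinion lies in the interval---which again comes back to the positive-measure requirement you have not secured.

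Finally, invoking Lemma~\ref{lem:as bounded} here is unnecessary (the paper does not use it for this lemma) and logically awkward, since its proof in Section~\ref{sec: bound} relies on Lemma~\ref{lem: aux left}.
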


\begin{lem}[Zero drift]\label{lem: zero drift barrier}
Let $[a,b]$ be an interval such that $\mu((a,b))>0$, and $\rho(x)=\rho^+(x)=0$ for all $x\in[a,b]$.
Then $[a,b]$ almost surely contains a barrier. Moreover, $\Pro(m_k\in [a,b] \text{ a.e.})>0$.
%Let $I$ be a closed interval such that $\rho^+(x)=\rho(x)=0$ for all $x\in I$
%and $\mu(I)>0$. Then, almost surely, $I$ contains a separator.
\end{lem}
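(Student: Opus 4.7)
The plan is to leverage the random walk coupling built in Section~\ref{sec: tools}: the lifted process $y_k$ satisfies $\E[y_{k+1} - y_k \mid \cF_k] = \rho(m_k)$ and has uniformly bounded increments. Under our hypothesis, whenever $m_k \in [a,b]$ we have $\rho(m_k) = 0$, so $y_k$ is a stopped bounded-increment martingale on each excursion of $m_k$ inside $[a,b]$. The condition $\mu((a,b)) > 0$ is then used to ensure that the number of members of $S_k$ in any fixed subinterval of $(a,b)$ grows linearly in $k$, so that a unit displacement of $y_k$ corresponds to an $m_k$-displacement of order $1/k$. This contraction of the $m$-scale is the engine driving both statements.

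For the moreover part, I would choose $a < a' < b' < b$ and first verify that for some deterministic time $k_0$, $\Pro(m_{k_0} \in (a', b')) > 0$. This follows from Lemma~\ref{lem:as bounded} together with the observation that neither Lemma~\ref{lem: right separators} nor Lemma~\ref{lem: left separators} can produce a barrier inside $(a,b)$ under our hypothesis, so the bounded trajectory $m_k$ cannot almost surely avoid $(a',b')$. Conditioning on $\{m_{k_0} \in (a', b')\}$, set $d := \min(m_{k_0} - a, b - m_{k_0})$ and $\tau := \inf\{k > k_0 : m_k \notin [a, b]\}$. Then $y_{k \wedge \tau} - y_{k_0}$ is a bounded-increment martingale whose $L^2$ norm grows as $O(\sqrt{k - k_0})$ lattice units. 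Iterating Doob's $L^2$ maximal inequality on dyadic windows $[2^n k_0, 2^{n+1} k_0]$ and dividing by the lower bound $\eta k_0$ on the local density of $S_k$ in $[a,b]$ (which holds with high probability by an elementary Chernoff bound on the admitted samples), the total $m$-displacement $\sup_{k > k_0}|m_k - m_{k_0}|$ is bounded, with positive probability, by an arbitrarily small multiple of $d$ once $k_0$ is large. Hence $\Pro(\tau = \infty) > 0$.

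For the barrier existence claim, I would first observe that $[a,b]$ fails to contain a barrier if and only if $m_k \le a$ infinitely often and $m_k > b$ infinitely often; in particular, this forces $m_k$ to traverse any subinterval $[c, c'] \subseteq (a, b)$ infinitely often. Fix $c < c'$ in $(a,b)$ with $\mu((c, c')) > 0$. A standard concentration estimate applied to the indicators $\ind\{x_j \in (c,c')\}$, whose conditional means are bounded below uniformly when $m_j$ lies in any fixed compact set containing $[a,b]$, shows that $|S_k \cap (c, c')| \ge \eta k$ almost surely for all large $k$. Each traversal of $[c,c']$ occurring during an excursion of $m_k$ in $[a,b]$ forces the bounded-increment martingale $y_k$ to shift by at least this many lattice units during a window of length at most $k$. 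Azuma-Hoeffding bounds the probability of such a shift by $2\exp(-c \eta^2 k)$, which is summable; Borel-Cantelli then rules out infinitely many traversals almost surely. The main technical challenge I expect is the clean bookkeeping of the random excursion endpoints, so that Azuma-Hoeffding can be applied uniformly excursion-by-excursion while preserving the linear-growth estimate on $|S_k \cap (c, c')|$ throughout all sufficiently late excursions.
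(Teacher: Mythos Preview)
Your outline captures the right intuition but two load-bearing steps fail. First, the argument for $\Pro(m_{k_0}\in(a',b'))>0$ is incorrect: that Lemmata~\ref{lem: right separators} and~\ref{lem: left separators} produce no barriers inside $(a,b)$ does not force $m_k$ to visit there---$m_k$ could with probability one be trapped elsewhere (e.g.\ by a negative-drift region above $b$). The paper instead constructs an explicit positive-probability event on the first $t_0$ candidate pairs (one opinion in a prescribed sub-interval of $I_1$, the other in a helper interval $I_2$) forcing $m_{t_0}$ into the target sub-interval.

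Second, and more seriously, the linear-accumulation claim $|S_k\cap(c,c')|\ge\eta k$ is unjustified. The conditional probability $\Pro(x_j\in(c,c')\mid m_j)$ is \emph{not} uniformly bounded below as $m_j$ ranges over $[a,b]$: when $m_j>c'$, admitting a member with opinion in $(c,c')$ requires the other candidate's opinion to exceed $2m_j-c$, which may have probability zero (take $\mu$ with bounded support and $b$ close to $\mu_{\max}$). The paper's remedy is Lemma~\ref{lem: two good intervals}, which produces $I_1,I_2\subset[a,b]$ of positive $\mu$-measure with $\tfrac{I_1+I_2}{2}>I_1$; then whenever $m_j\in I_1$, a candidate pair with one opinion in any $J\subseteq I_1$ and the other in $I_2$ is automatically admitted, yielding the uniform accumulation rate~\eqref{eq: more people}. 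Without this structural input your Chernoff and Azuma steps have no lower bound to work with. Even granting it, there remains a circularity---accumulation in sub-intervals of $I_1$ is only guaranteed \emph{while} $m_k\in I_1$, which is what you are trying to prove---and the paper resolves this via a nested-shell decomposition (Claims~\ref{clm: A}--\ref{clm: Q}) in which, by the time $m_k$ escapes shell $j$, enough members have already accumulated in shell $j{+}1$ to slow the next crossing; your single fixed pair $(c,c')$ with a dyadic time window does not address this.
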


From these we also derive the following corollary.

\begin{cor}\label{cor: measure is barrier}
let $a\le b$ such that $F_{a}<F_{b+}$, then $[a,b]$ a.s. contains a barrier.
\end{cor}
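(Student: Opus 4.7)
Since the stated hypothesis $F_a < F_{b+}$ is incompatible with the monotonicity $F_a \ge F_b \ge F_{b+}$, I read the intended condition as $F_a > F_{b+}$, equivalently $\mu*\mu([2a,2b]) > 0$. A preliminary reduction: if $\mu_{\max} \in [a,b]$, then since each $m_k$ coincides with one of the $x_j$'s and $x_j \le \mu_{\max}$ almost surely, the point $\mu_{\max}$ is already a right-barrier inside $[a,b]$; hence I henceforth assume $b < \mu_{\max}$.

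The main argument is a case split on the signs of $\rho$ and $\rho^+$ on $[a,b]$. If some $x \in [a,b]$ satisfies $\rho(x) < 0$, then Lemma~\ref{lem: right separators}(\ref{lem: right separators: item barrier}) provides a right-barrier at $x$. If some $x \in [a,b]$ satisfies $\rho^+(x) > 0$, then Lemma~\ref{lem: left separators}(\ref{lem: left separators: item barrier}), applicable since $x < \mu_{\max}$, provides an a.s.\ barrier at $x$. Otherwise $\rho \ge 0$ and $\rho^+ \le 0$ throughout $[a,b]$; combined with $\rho \le \rho^+$ from Observation~\ref{obs: rho contin}, this forces $\rho \equiv \rho^+ \equiv 0$ on $[a,b]$ and, again by Observation~\ref{obs: rho contin}, forces $\mu$ to have no atoms in $[a,b]$. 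If moreover $\mu((a,b)) > 0$, Lemma~\ref{lem: zero drift barrier} applies directly.

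What remains is the residual case $\mu((a,b)) = 0$, which is the only real obstacle. Here $\mu((x,\infty))$ is constant on $[a,b]$, so the identity $\mu((x,\infty))^2 = (1-r)F_x$ coming from $\rho \equiv 0$ forces $F_x$ to be constant on $[a,b]$ as well; in particular $F_a = F_b$, so the hypothesis collapses to $F_b > F_{b+}$, meaning $\mu*\mu$ carries an atom at $2b$. Since $\mu((q,\infty))$ remains equal to the same common value as $q \searrow b$, a direct computation gives
\[
\lim_{q \to b+}\rho(q) = (1-r)\left(\frac{F_b}{F_{b+}} - 1\right) > 0,
\]
so Lemma~\ref{lem: left separators}(\ref{lem: left separators: item right limit}) yields $b' > b$ for which $(b,b')$ is almost surely a separator. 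By the convention that $m_k > (b,b')$ means $m_k \ge b'$, the separator property forces $m_k \le b$ eventually or $m_k \ge b'$ eventually, exhibiting $b \in [a,b]$ itself as an almost sure barrier. The conceptual point, which is the key obstacle of the proof, is recognising that the atom of $\mu*\mu$ at $2b$ produced by the hypothesis is precisely what creates the rightward jump in $\rho$ needed to feed Lemma~\ref{lem: left separators}(\ref{lem: left separators: item right limit}).
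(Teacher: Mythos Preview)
Your proof follows the paper's approach—the same three-way case split on the signs of $\rho$ and $\rho^+$—but is more careful in the zero-drift case. The paper asserts in that case that the hypothesis forces $\mu([a,b])>0$, allowing direct application of Lemma~\ref{lem: zero drift barrier}; its stated justification (``$F_x$ is monotone decreasing on any interval of $\mu$-measure zero'') does not rule out $\mu([a,b])=0$, and indeed this possibility occurs: take $a=b$ with $\mu(\{b\})=0$ but $\mu*\mu(\{2b\})>0$, which is exactly the $a=b=m$ instance used later in the proof of Proposition~\ref{prop: main}. Your residual-case argument—recognising that the atom of $\mu*\mu$ at $2b$ forces $\lim_{q\to b+}\rho(q)>0$ and then invoking Lemma~\ref{lem: left separators}(\ref{lem: left separators: item right limit}) to exhibit $b$ as a barrier—fills this gap and is a genuine strengthening of the paper's argument. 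Your preliminary reduction to $b<\mu_{\max}$, needed for the applicability of Lemma~\ref{lem: left separators}, is likewise absent from the paper.
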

\begin{proof}
We consider three cases. If there exists $x\in[a,b]$ such that $\rho(x)<0$, then, by Lemma~\ref{lem: right separators}\ref{lem: right separators: item barrier}, the point $x$ is a.s. a barrier. Similarly, if
there exists $x\in[a,b]$ such that $\rho^+(x)>0$, then,
by Lemma~\ref{lem: left separators}\ref{lem: left separators: item barrier}, the point $x$ is a.s. a barrier. Since $\rho(x)\le \rho^+(x)$ for all $x$ (by Observation~\ref{obs: rho contin}), we obtain in the remaining case, $\rho(x)=\rho^+(x)=0$ for all $x\in[a,b]$.
We observe that, by definition, $F_x$ is monotone decreasing on any interval of $\mu$-measure zero. Hence by the assumption $F_{a}<F_{b+}$ together with \eqref{eq: rho plus}, we have $\mu[a,b]>0$. By Observation~\ref{obs: rho contin} we have $\mu(\{a\})=\mu(\{b\})=0$ so that $\mu(a,b)>0$. Thus, by Lemma~\ref{lem: zero drift barrier}, the interval $(a,b)$ almost surely contains a barrier.
\end{proof}
\subsection{Proof of Proposition~\ref{prop: main}}\label{subs:pf of main prop}
By Lemma~\ref{lem:as bounded}, the sequence $m_k$ is almost surely bounded.
In addition, $F_\ell$ is left-continuous and monotone decreasing, so that if $F_{m_k}$ converges, then its
limit must be of the form $F_\ell$ for $\ell\in \{m,m+\}$ for a finite $m\in\R$.

Let $\{m_{k_i}\}_{i\in\N}$ and $\{m_{k'_i}\}_{i\in\N}$ be two convergent subsequences of $\{m_k\}_{k\in\N}$
whose limits we denote by $m$ and $m'$ respectively, and assume without loss of generality that $m'\le m$.
To show convergence of $F_{m_k}$ it would suffice to show that
\begin{equation}\label{eq: F goal}
\lim_{i\to\infty} F_{m_{k'_i}}=\lim_{i\to\infty} F_{m_{k_i}}.
\end{equation}

For any $\ep>0$, we have $m_k<m'+\ep$ for
infinitely many values of $k$ and $m_k>m-\ep$ for
infinitely many values of $k$ so that the interval $(m',m)$
does not contains any barrier.
By Corollary~\ref{cor: measure is barrier}, this implies that $F_{m'}=F_m$, almost surly.

To derive convergence we consider two cases. If $F_{m}= F_{m+}$,
then
\vspace{-5pt}
$$F_{m}=F_{m+}\le F_{m'+} \le F_{m'}=F_{m},$$
\vspace{-20pt}

\noindent so that
{%$F_m=F_{m+}=F_{m'}=F_{m'+}$ and
\eqref{eq: F goal} holds with both sides equal to $F_m=F_{m+}=F_{m'}=F_{m'+}$}. On the other hand, if $F_{m}\neq F_{m+}$,
then $m$ is a barrier by Corollary~\ref{cor: measure is barrier}.
Hence either $m_k\le m$ for almost all $k$, or $m_k>m$ for almost all $k$
and $m=m'$. In the latter case we immediately obtain \eqref{eq: F goal} with
both sides equal $F_{m+}$.
In the former, since $F_{m}\le F_{m'+}\le F_{m'}=F_m$, we obtain that
\eqref{eq: F goal} holds with both sides equal to $F_{m}$.

\smallskip
Next, we establish the moreover part of the proposition.
Let $\{m_{k_i}\}_{i\in\N}$ be a {monotone} convergent subsequence of $\{m_k\}_{k\in\N}$ and denote its limit by $\ell\in \{m,m+\}$,
and observe that, by definition, there cannot be a separator $(a,b)$ containing $m$. 

First, observe that if $\mu_{\max}<\infty$ and $\rho^+(\mu_{\max})<0$ then by 
Lemma~\ref{lem: right separators}\ref{lem: right separators: item plus} there exists $a<\mu_{\max}$ such that $(a,\mu_{\max}]$ is a separator. Therefore if $m=\mu_{\max}<\infty$ then $\rho^+(\mu_{\max})\ge 0$. 

Henceforth we assume that $\rho$ is defined at $m$.
Denote $S^+=\{x\ :\ \rho(x)>0\}$. For each $x\in S^+$ write $I_x^+:=(a_x,b_x)$ for the separator guaranteed by Lemma~\ref{lem: left separators}\ref
{lem: left separators: item plus} and observe that $m\notin (a_x,b_x)$ almost surely. The collection $\{I_x^+: \ x\in S^+\}$ is an open cover of $S^+$. Since $\R$ is second-countable,
this collection has a countable subcover. Since $m$ is almost surely not in any of the sets of this subcover, we deduce that $m\notin S^+$ almost surely, that is, 
\begin{equation}\label{eq: first case cover}
\rho(m)\le 0.% \text{ or  } m=\mu_{\max}<\infty.
\end{equation}

Similarly, write $S^-=\{x\ :\ \rho^+(x)<0, \lim_{t\to m+} \rho(t) \neq 0\}$. 
For each $x\in S^-$ write $I_x^-:=(a_x,b_x)$ where $(a_x,m]$ is the separator guaranteed by Lemma~\ref{lem: right separators}\ref{lem: right separators: item plus},
and $(m,b_x)$ is the separator guaranteed by either Lemma~\ref{lem: right separators}\ref{lem: right separators: item right limit}
or Lemma~\ref{lem: left separators}\ref{lem: left separators: item right limit}, depending on the sign of $\lim_{t\to m+} \rho(t)$. Observe that
$m\notin (a_x,b_x)$ almost surely. As before, the collection of separtors $\{I_x^-: \ x\in S^-\}$ is an open cover of $S^-$
so that $m\notin S^-$ almost surely. We deduce that 
\begin{equation}\label{eq: second case cover}
\text{either
$\rho^+(m)\ge 0$ or $\lim_{t\to m+} \rho(t) = 0$.}
\end{equation}

Next, denote by $A$ the set $x$ such that $2x$ is an atom of $\mu*\mu$. Observe that for all $x\in A$ we have $F_x \neq F_{x+}$. For any $x\in A$ which satisfies $\rho^+(x)< 0$, we have, by Lemma~\ref{lem: right separators}\ref{lem: right separators: item plus},
that there exists $a<m$ such that $(a,x]$ is a separator so that $m_{k_i}$ can only converge to $x$ from above almost surely, i.e., if $m=x$ then
$F_\ell = F_{m+} \neq F_{m}$ almost surely.
Similarly, for any $x\in A$ which satisfies $\lim_{t\to x+} \rho(t) \neq 0$ we have, by Lemma~\ref{lem: right separators}\ref{lem: right separators: item right limit}
and Lemma~\ref{lem: left separators}\ref{lem: left separators: item right limit}, that almost surely
there exists $b>x$ such that $(x,b)$ is a separator. Thus, almost surely, $m_{k_i}$ can only converge to $x$ from below, 
so that if $m=x$ then
$F_\ell = F_{m} \neq F_{m+}$ almost surely. 
Since $A$ is countable we deduce that in case that $m\in A$, the proposition is satisfied.

Finally for any $x\notin A\cup S^+ \cup S^-$, by Observation~\ref{obs: rho contin} and the fact that $A$ contains the atoms of $\mu$, we have
$\rho(x)=\rho^+(x)=\lim_{t\to x+} \rho(t)$. By combining \eqref{eq: first case cover}
and \eqref{eq: second case cover} we deduce that in case $m\notin  A\cup S^+ \cup S^-$ we have
$\rho(m)=\rho^+(m)=\lim_{t\to m+}\rho(t)=0$. Since in this case
both $F_m=F_{m+}$, the proposition is satisfied.
\qed.

\subsection{Proof of Proposition~\ref{prop: non-determinism}}\label{subs:pf of main non-det}
The case $\rho(x)<0$ is immediate from Lemma \ref{lem: right separators}\ref{lem: right separators: item barrier}, the case $\rho(x)>0$ of the first part is immediate from Lemma \ref{lem: left separators}\ref{lem: left separators: item plus}.
The moreover part is immediate from Lemma \ref{lem: zero drift barrier}. \qed.

\section{Preliminaries}\label{sec: tools}
%{Preliminaries}
In this section we establish the probabilistic infrastructure required to prove lemmata \ref{lem:as bounded}-\ref{lem: zero drift barrier}.
In Section \ref{sec: gen walks} we introduce facts about general random walks with drift.
In Section~\ref{subs:quantile random walks} we construct a coupling of the quantile process with a random walk in changing environment and introduce relevant notation for its analysis.
In Section \ref{sec: meas} we present a simple claim about continuous probability measures.

\subsection{On general random walks}\label{sec: gen walks}

Let $\{A_k\}_{k\ge k_0}$ be a sequence of events, {adapted to a filtration $\{\cF_k\}$}. Denote the stopping time $T_{\{A_k\}}:=\min\{k \ge k_0: \ A_k \text{ occurred} \}$.
The first result we recall is Azuma's inequality \cite{Azuma} concerning martingales.

\begin{lem}[Azuma]\label{lem: A}
Let $X_k$ be a random process started at $X_0=0$. Assume that
\begin{mitemize}
\item ($X_k$ is a martingale) for all $k$ we have $\E(X_{k+1} \ |\ X_0,\dots, X_k )= X_k $.
\item (uniformly bounded steps) almost surely $|X_{k+1}-X_k| \le 1$ for all $k\in\N$.
\end{mitemize}
Then for every $\al>0$ we have $\Pro( T_{\{X_k\ge \al\}} < n ) \le \exp(-\frac{\al^2}{2n})$.
\end{lem}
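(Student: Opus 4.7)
The plan is to run the standard Chernoff-style argument for martingales: exponentiate, apply Doob's maximal inequality, bound the moment generating function step-by-step via Hoeffding's conditional lemma, and optimise in the free parameter.

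First I would rephrase the event. Since $\{T_{\{X_k\ge\alpha\}}<n\}$ is exactly $\{\max_{k<n} X_k \ge \alpha\}$, we must bound this one-sided maximum. Fix any $\lambda>0$ and set $M_k := e^{\lambda X_k}$. Since $x\mapsto e^{\lambda x}$ is convex and $X_k$ is a martingale, Jensen's inequality applied conditionally on $\cF_k$ shows $M_k$ is a non-negative submartingale. Doob's submartingale maximal inequality then gives
\[
\Pro\!\left(\max_{k<n} X_k \ge \alpha\right)
= \Pro\!\left(\max_{k<n} M_k \ge e^{\lambda \alpha}\right)
\le e^{-\lambda \alpha}\,\E[M_n].
\]

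The core step is to control $\E[M_n]=\E[e^{\lambda X_n}]$ by a step-by-step estimate on the martingale increments $D_k := X_k - X_{k-1}$, which satisfy $\E[D_k\mid \cF_{k-1}]=0$ and $|D_k|\le 1$. The key lemma (Hoeffding) is: for any random variable $D$ with $|D|\le 1$ and $\E D = 0$, one has $\E[e^{\lambda D}]\le e^{\lambda^2/2}$. I would prove it by writing every $x\in[-1,1]$ as the convex combination $x = \tfrac{1-x}{2}(-1)+\tfrac{1+x}{2}(1)$, so that convexity of $e^{\lambda x}$ yields
\[
e^{\lambda x} \le \tfrac{1-x}{2} e^{-\lambda} + \tfrac{1+x}{2} e^{\lambda},
\]
and taking expectations gives $\E[e^{\lambda D}]\le \cosh(\lambda)\le e^{\lambda^2/2}$ (the last inequality by comparing Taylor series). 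Applying this conditionally to each $D_k$ and iterating the tower property yields $\E[e^{\lambda X_n}] \le e^{n\lambda^2/2}$.

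Combining the two estimates, $\Pro(T_{\{X_k\ge\alpha\}}<n)\le \exp(-\lambda\alpha + n\lambda^2/2)$. Optimising over $\lambda>0$, the minimiser is $\lambda = \alpha/n$, giving the stated bound $\exp(-\alpha^2/(2n))$. The argument is entirely classical; the only subtlety worth highlighting is that the conditional version of Hoeffding's bound is what allows one to chain the estimates across dependent increments, which is the feature that distinguishes the martingale setting from the independent case.
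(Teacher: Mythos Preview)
Your proof is correct and is the standard Chernoff--Doob argument for Azuma's inequality. The paper itself does not prove this lemma; it simply cites it as a known result from \cite{Azuma}, so there is nothing to compare against. One cosmetic remark: since the maximum is over $k<n$, Doob's inequality actually yields $e^{-\lambda\alpha}\E[M_{n-1}]$ rather than $e^{-\lambda\alpha}\E[M_n]$; as $M$ is a submartingale this only helps, and the final bound is unchanged.
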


We also recall Hoeffding's inequality for i.i.d. binomial random variables.
\begin{lem}[Hoeffding]\label{lem: H}
If $X\sim \text{Bin}(n,p)$, then for any $\ep>0$ we have
{$\Pro( X-p n  \ge \ep n) \le e^{-2\ep^2 n}$
and $\Pro(X-pn \le -\ep n) \le e^{-2\ep^2 n}$.}
\end{lem}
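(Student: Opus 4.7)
The plan is to apply the standard Chernoff bounding technique. Write $X=\sum_{i=1}^n X_i$ where the $X_i$ are i.i.d.\ Bernoulli$(p)$ random variables, so that the centred summands $Y_i:=X_i-p$ are independent, mean-zero, and take values in the interval $[-p,\,1-p]$, whose length is exactly $1$.

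First I would handle the upper tail. For any $t>0$, applying Markov's inequality to $e^{t(X-pn)}$ and then using independence,
\[
\Pro(X-pn \ge \ep n) \;\le\; e^{-t\ep n}\,\E\big[e^{t(X-pn)}\big] \;=\; e^{-t\ep n}\prod_{i=1}^n \E[e^{tY_i}].
\]
The key quantitative input is the auxiliary Hoeffding lemma, asserting that if $Y$ is a mean-zero random variable supported in an interval of length $L$, then $\E[e^{tY}]\le e^{t^2 L^2/8}$. Applied in our setting with $L=1$, it yields $\E[e^{tY_i}]\le e^{t^2/8}$ for each $i$, and hence
\[
\Pro(X-pn \ge \ep n) \;\le\; \exp\!\big(n t^2/8 - t\ep n\big).
\]
Optimizing the free parameter by taking $t=4\ep$ produces the claimed bound $e^{-2\ep^2 n}$.

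For the lower tail, I would use the symmetry $n-X\sim \text{Bin}(n,1-p)$, which gives $\Pro(X-pn\le -\ep n)=\Pro\bigl((n-X)-(1-p)n \ge \ep n\bigr)$; this is then controlled by applying the upper-tail bound already proved to the binomial variable $n-X$.

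The main obstacle is establishing the auxiliary Hoeffding lemma for bounded mean-zero random variables. I would prove it by first invoking convexity of $x\mapsto e^{tx}$ on the supporting interval $[a,b]$ of $Y$ to dominate $e^{tY}$ pointwise by the secant $\frac{b-Y}{b-a}e^{ta}+\frac{Y-a}{b-a}e^{tb}$. Taking expectations and using $\E Y=0$ expresses $\log \E[e^{tY}]$ in the form $\phi\bigl(t(b-a)\bigr)$ for a smooth function $\phi$ satisfying $\phi(0)=\phi'(0)=0$ and $\phi''(s)\le 1/4$ (the last bound coming from the inequality $u(1-u)\le 1/4$ that arises when one computes $\phi''$ as the variance of a tilted Bernoulli). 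A second-order Taylor expansion in $s$ then yields $\phi(s)\le s^2/8$, completing the proof of the lemma and hence of the desired tail bound.
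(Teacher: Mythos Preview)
Your proof is correct and follows the standard textbook argument via the Chernoff method and Hoeffding's lemma. The paper, however, does not prove this statement at all: it simply recalls Hoeffding's inequality as a known classical result and cites no proof. So there is nothing to compare against beyond noting that you have supplied a complete derivation where the paper chose to omit one.
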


We need the following variation on the classical ``gambler's ruin'' problem (a.k.a. Cram\'er-Lundberg inequality). Its proof, being standard, is omitted (see~\cite{ruin} and~\cite{Fel}*{Ch. XIV.2} for similar results).
\begin{lem}\label{lem: ruin}
Let $X_k$ be a random process started at $X_0=0$ with the following properties:
\begin{mitemize}
\item there exists $\eta>0$ such that for all $k\in\N$ we have:
$ \E(X_{k+1} \ |\ X_0,\dots, X_k )\le X_k -\eta$.
\item  there exists $M>0$ such that
almost surely $|X_{k+1}-X_k| \le M$ for all $k\in\N$.
\end{mitemize}
Then there exists $q=q_{M,\eta}\in (0,1)$ such that for all $\ell>0$ we have: \[ \Pro(T_{\{X_k\ge \ell\}} < \infty \ \big| \ X_{0}=0)< q^\ell.\]
%In particular, \[\Pro(T_{\{ X_k > 0  \}}=\infty \mid X_0=0)> 1- q> 0.\]
\end{lem}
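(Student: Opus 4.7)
The plan is to invoke the classical exponential tilt technique and reduce the estimate to a positive supermartingale bound. Concretely, I will seek a parameter $\lambda = \lambda(M,\eta)>0$ such that $M_k := e^{\lambda X_k}$ is a supermartingale with respect to $\cF_k := \sigma(X_0,\dots,X_k)$. Setting $T := T_{\{X_k\ge \ell\}}$ and applying optional stopping at the bounded time $T\wedge n$ then gives $\E(M_{T\wedge n})\le M_0 = 1$. Since $X_T\ge \ell$ on $\{T\le n\}$, Markov's inequality yields $e^{\lambda \ell}\Pro(T\le n)\le \E(M_{T\wedge n})\le 1$, hence $\Pro(T\le n)\le e^{-\lambda \ell}$. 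Letting $n\to\infty$ and setting $q := e^{-\lambda}\in (0,1)$ delivers the stated bound $\Pro(T<\infty)\le q^\ell$.

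The only nontrivial step is verifying the supermartingale property, which amounts to the one-step inequality $\E(e^{\lambda \Delta_k}\mid \cF_k)\le 1$, where $\Delta_k := X_{k+1}-X_k$. Using the elementary estimate $e^x \le 1 + x + \tfrac{x^2}{2}e^{|x|}$ together with $|\Delta_k|\le M$ and $\E(\Delta_k\mid \cF_k)\le -\eta$, I obtain
$$\E\bigl(e^{\lambda \Delta_k}\,\big|\,\cF_k\bigr)\le 1 - \lambda\eta + \tfrac{\lambda^2 M^2}{2}\, e^{\lambda M}.$$
Choosing $\lambda$ small enough, in terms of $M$ and $\eta$ alone, so that $\tfrac{\lambda M^2}{2}e^{\lambda M}\le \eta/2$, the right-hand side is at most $1 - \lambda\eta/2 < 1$, which is exactly what is needed.

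I expect the main obstacle to lie in the calibration of $\lambda$: the Taylor-type bound must hold uniformly over all the (possibly $\cF_k$-dependent) conditional step distributions permitted by the hypotheses, but the uniform boundedness $|\Delta_k|\le M$ and the uniform drift $\E(\Delta_k\mid\cF_k)\le -\eta$ supply exactly this uniformity with no further work. Once $\lambda$ is fixed, the remainder of the argument is a textbook martingale computation, so I do not anticipate any further technical difficulty.
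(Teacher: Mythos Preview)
Your proof is correct and is exactly the standard exponential supermartingale (Cram\'er--Lundberg) argument that the paper has in mind: the paper omits the proof entirely, calling it ``standard'' and referring the reader to \cite{ruin} and \cite{Fel}*{Ch.~XIV.2}, which develop precisely this exponential tilt technique. A trivial cosmetic point: your argument yields $\le q^\ell$ rather than the strict inequality stated, but since $\lambda$ can be chosen in an open range this is harmless (replace $q=e^{-\lambda}$ by any $q'\in(e^{-\lambda},1)$).
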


We use this lemma to obtain the following bound.

\begin{lem}\label{lem: ruin2}

Let $\ell_0\in\N$ and let $\{\ell_i\}_{i\in\N}$ be a sequence of random variables taking values in $\N_0$, adapted to a filtration $\{\cF^{(i)}_0\}_{i\in\N}$
(i.e., $\ell^{(1)},\ell^{(i)}$ are $\cF_0^{(i)}$-measurable).
Further let $\{X_k^{(i)}\}_{i\in\N, k\in\N_0}$ be a collection of random processes on $\R$, started at $X_0^{(i)}=0$, which are
adapted to a filtration $\{\cF^{(i)}_1\}_{i\in\N}$ (i.e., $X^{(1)},\dots,X^{(i)}$ are $\cF_1^{(i)}$-measurable), and
assume that for all $i,k\in\N$ $|X^{(i)}_{k}-X^{(i)}_{k-1}| \le  1$ almost surely, and assume that $\cF^{(i)}_0 \subseteq \cF^{(i)}_1 \subseteq \cF^{(i+1)}_0$ for all $i\in\N$.
Finally, let $\tau_i$ be a stopping time for $\{X_k^{i}\}_{k\in\N}$.

Assume that the following conditions hold for some $\eta,p>0$:
\begin{enumerate}
\item For all $k<\tau_i$ we almost surely have $\E\left(X^{(i)}_{k+1} \ \big| \ X^{(i)}_0,\dots, X^{(i)}_k, \cF_0^{(i)} \right)\le X^{(i)}_k -\eta$,
\item We almost surely have $\Pro\left(\ell_i>0\ |\ \cF^{(i-1)}_1\right)>p$.
\end{enumerate}

Then, there exists $q(\eta,p)\in (0,1)$ such that
%for each $\eta,p>0$ there exists $q$ depending only on $\eta$ and $p$ such that
\vspace{-5pt}
\[
\Pro\Big(\exists i\in\N, k< \tau_i: \ X^{(i)}_k\ge \sum_{j=0}^i \ell_j\Big)\le q^{\ell_0}.
\]
\end{lem}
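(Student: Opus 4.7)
The plan is to bound the probability via a union bound over $i$, combining a conditional application of Lemma~\ref{lem: ruin} with hypothesis (2).

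First, conditional on $\cF^{(i)}_0$, the restricted process $\{X^{(i)}_k\}_{k\le \tau_i}$ satisfies the hypotheses of Lemma~\ref{lem: ruin} (drift at most $-\eta$, bounded steps of size at most $1$), and the sum $L_i:=\sum_{j=0}^i \ell_j$ is $\cF^{(i)}_0$-measurable. Applying Lemma~\ref{lem: ruin} to the regular conditional distribution yields some $q_1=q_1(\eta)\in(0,1)$ such that
\[ \Pro\bigl(\exists k<\tau_i:\, X^{(i)}_k\ge L_i \,\bigm|\, \cF^{(i)}_0\bigr)\le q_1^{L_i}\quad\text{a.s.} \]

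Next, I would bound $\E[q_1^{L_i}]$ using hypothesis (2). Setting $\beta:=1-p(1-q_1)<1$, the estimate $\Pro(\ell_j\ge 1\mid \cF^{(j-1)}_1)\ge p$ yields $\E[q_1^{\ell_j}\mid \cF^{(j-1)}_1]\le \beta$; iterating via the tower property (using $\cF^{(j-1)}_1\subseteq \cF^{(j)}_0$) gives $\E[q_1^{L_i}]\le q_1^{\ell_0}\beta^i$. A union bound over $i\in\N$ then yields
\[ \Pro\bigl(\exists i,\,k<\tau_i:\, X^{(i)}_k\ge L_i\bigr)\le \sum_{i\ge 1}\E[q_1^{L_i}]\le \frac{\beta}{1-\beta}\, q_1^{\ell_0}. \]

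The main remaining obstacle is to absorb the multiplicative constant $\beta/(1-\beta)$ into the exponential base, so that the bound takes the form $q^{\ell_0}$ with $q\in(0,1)$ depending only on $\eta$ and $p$. Since $(\beta/(1-\beta))^{1/\ell_0}\to 1$ as $\ell_0\to\infty$, any choice $q\in(q_1,1)$ works for $\ell_0$ beyond a threshold depending only on $\eta$ and $p$. For the finitely many smaller values of $\ell_0$, one observes that the probability is strictly less than $1$: the almost-sure finiteness of $\sum_i q_1^{L_i}$ (derived from the same union bound) forces $\E\prod_i(1-q_1^{L_i})>0$, so that the complementary event has positive probability. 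Taking $q$ to be the maximum of $q_1$ and the finitely many $\ell_0$-th roots of the corresponding probabilities then completes the proof.
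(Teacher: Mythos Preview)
Your union-bound argument through the bound $\Pro(\exists i,k<\tau_i:X^{(i)}_k\ge L_i)\le \tfrac{\beta}{1-\beta}\,q_1^{\ell_0}$ is correct and is a genuinely different route from the paper's. The paper argues on the complement: it decomposes over the possible values of $(\ell_1,\ell_2,\dots)$, bounds each factor $\Pro(\forall k<\tau_j:X^{(j)}_k<L_j\mid\cdots)\ge 1-w^{L_j}$ via Lemma~\ref{lem: ruin}, and then replaces $\{\ind(\ell_j>0)\}$ by i.i.d.\ Bernoulli($p$) variables using the stochastic domination in hypothesis~(2). After an explicit computation this yields a bound of the form $(1-p)\min(w^{\ell_0}/p,1)$, which is at most $1-p<1$ for \emph{every} $\ell_0\ge 1$; no separate treatment of small $\ell_0$ is needed. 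Your approach is more streamlined in reaching the exponential decay, and the iterated-conditioning step $\E[q_1^{L_i}]\le q_1^{\ell_0}\beta^i$ is a nice shortcut.

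There is, however, a genuine gap in your final paragraph. Two issues. First, the inference ``$\E\prod_i(1-q_1^{L_i})>0$, hence the complementary event has positive probability'' is not justified: you only know $\Pro(A_i^c\mid\cF_0^{(i)})\ge 1-q_1^{L_i}$, and since $L_{i+1},L_{i+2},\dots$ may depend on whether $A_i$ occurred (they are only $\cF_0^{(i+1)}$-measurable, and $A_i\in\cF_1^{(i)}\subseteq\cF_0^{(i+1)}$), the product lower bound $\Pro(\cap_i A_i^c)\ge\E\prod_i(1-q_1^{L_i})$ does not follow from what you have written. Second, and more fundamentally, the lemma requires $q$ to depend only on $(\eta,p)$ and to work for \emph{all} processes satisfying the hypotheses. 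Your ``$\ell_0$-th roots of the corresponding probabilities'' are computed from the particular process at hand, so the $q$ you construct is process-dependent. To close the argument you would need, for each of the finitely many small $\ell_0$, a bound on the probability that is uniform over all admissible $\{\ell_i\},\{X^{(i)}\}$---which is exactly what the paper's product-form estimate supplies and your argument does not.
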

\begin{proof}
Denote $E_{j,L}=\{\forall k<\tau_j\ :\ X^{(j)}_k<L\}$ and
$F_{j,L}=\{\ell_j = L\}$.
For {$\{L_i\} = \{L_i\}_{i\in\N}$}, a sequence of non-negative integers,
denote {$\overline L_j=\sum_{i=0}^j L_i$, $\overline E_{n,\{L_i\}}=\bigcap_{i=0}^n E_{i,\overline L_i}$ and
$\ov F_{n,\{L_i\}}=\bigcap_{i=0}^n F_{i,L_i}$.}
We compute:
\begin{equation}\label{eq: big prod}
\Pro\Big(\forall i\in\N, k< \tau_i: \ X^{(i)}_k< \sum_{j=0}^i \ell_j\Big)=
\sum_{\{L_i\}_{i\in\N}}\prod_{j=0}^\infty \Pro(F_{j,L_j}\ |\ \ov F_{j-1,\{L_i\}}, \ov E_{j-1,\{L_i\}})
\Pro( E_{j,L_j}\ |\ \ov F_{j,\{L_i\}}, \ov E_{j-1,\{L_i\}})
\end{equation}

We now bound from below each term in this product.
%\noindent
Observe that, for each $j$, the process $(X^{(j)} \mid \cF^{(j)}_0)$ satisfies the conditions of Lemma~\ref{lem: ruin} almost surely, so that there
exists $w=w(\eta)\in(0,1)$ such that
\begin{equation}\label{eq: Es}
\Pro\big(E_{j,L_j} \mid \ov F_{j,\{L_i\}}, \ov E_{j-1,\{L_i\}}\big) \ge \,1-w^{L}
\end{equation}
 for each $j,L\in\N$.
Let $\{B_i\}_{i\in\N}$ be a sequence of i.i.d. Bernoulli$(p)$ random variables independent from everything else and write $\overline B_j=\sum_{i=1}^j B_j$.
By our assumption,
$\Pro(\ell_i>0\ |\ \cF^{(i-1)}_1 )\ge \Pro(B_j=1 )$ almost surely.
%Since $\Pro(E_{i,L}\ |\ \ov F_{i-1,\{L_i\}}, \ov E_{i-1,\{L_i\}})$ is monotone decreasing in $L$,
Thus, for any given sequence of numbers $\{L_i\}_{i\in\N}$ we have:
\begin{equation}\label{eq: Fs}
\Pro\big(F_{j,L_j}\ \big|\ \ov F_{j-1,\{L_i\}}, \ov E_{j-1,\{L_i\}}\big) \ge  \Pro\big(B_j=\ind(L_i>0)\big).
\end{equation}

Using \eqref{eq: Es} and {the stochastic domination provided by} \eqref{eq: Fs}, we obtain a bound on the RHS of \eqref{eq: big prod} {in terms of }the random sequence $\{B_i\}$:
\begin{align*}
1-\Pro\Big(\exists i\in\N, k< \tau_i: \ X^{(i)}_k\ge \sum_{j=0}^i \ell_j\Big)
%&\ge \sum_{\{L_i\}_{i\in\N}}\prod_{j=0}^\infty \Pro(B_j=\ind(L_i>0))
%\Pro(E_{j,\overline B_j}\ |\ F_{j,\{L_i\}},E_{j-1,\{L_i\}})\\
&\ge \sum_{\{b_i\} \in \{0,1\}^\N }\prod_{j=0}^\infty \Pro(B_j=b_i)
(1-w^{\overline B_j})
\\ & \ge \sum_{i= \ell_0}^\infty \sum_{n=1}^\infty p (1-p)^{n-1} (1-w^i)^n
\\ & = p \sum_{i= \ell_0}^\infty \sum_{n=0}^\infty (1-p)^n(1-w^i)^n
\\ &= \frac{p}{p+w^\ell_0-pw^{\ell_0}} = 1-  \frac{w^{\ell_0}-pw^{\ell_0}}{p+w^{\ell_0}-pw^{\ell_0}}
\\ & \ge 1-(1-p)\min\left(\frac{w^{\ell_0}}{p},1 \right).
\end{align*}
Finally we let $s$ be such that $w^{s}=p^2$ and observe that
$$(1-p)\min\left(\frac{w^{\ell_0}}{p},1 \right)\le \max\left(\sqrt{w},\sqrt[s]{1-p}\right)^{\ell_0}.$$
Setting $q=\max\left(\sqrt{w},\sqrt[s]{1-p}\right)$, the proposition follows.
\end{proof}

\subsection{The quantile random walks}\label{subs:quantile random walks}
We have seen that the empirical distribution of opinions depends entirely on the quantile random process $m_k$.
However, this process defies direct analysis as it has irregular steps and high dependence on the past.
To tame it, we introduce a pair of transformations of $m_k$, which we refer to as \emph{the quantile random walks}.
These are defined as follows:
\begin{equation} \label{eq: y def}
\begin{split}
&y_k = rk - |\{j< k\ :\ x_j \le m_j \}| , \\ &y^+_k=rk -  |\{j< k\ :\ x_j < m_j \}|.
\end{split}
\end{equation}
Observe that
\begin{equation}\label{eq: y pm}
\begin{cases}
& \Pro(y_{k+1}=y_{k}+r)=\Pro(\min(X,Y)>m_k \mid \frac{X+Y}{2}\ge m_k),\\
& \Pro(y_{k+1}=y_{k}+r-1)=\Pro(\min(X,Y)\le m_k\mid \frac{X+Y}{2}\ge m_k),\\
& \Pro(y^+_{k+1}=y^+_{k}+r)=\Pro(\min(X,Y)\ge m_k \mid \frac{X+Y}{2}\ge m_k),\\
& \Pro(y^+_{k+1}=y^+_{k}+r-1)=\Pro(\min(X,Y) < m_k\mid \frac{X+Y}{2}\ge m_k).\\
\end{cases}
\end{equation}
The drifts of the walks $y,y^+$ are
\begin{equation} \label{eq: rho def}
\begin{split}
\rho(x)&=\E[(y_{k+1}-y_k) \mid m_k=x, \cF_{k-1}],\\
\rho^+(x)&=\E[(y^+_{k+1}-y^+_k) \mid m_k=x, \cF_{k-1}],
\end{split}
\end{equation}
where $\rho$ and $\rho^+$ are those given in \eqref{eq: rho} and \eqref{eq: rho plus}, respectively.
We remark that the need for introducing the $+$ variant of each notation arises from the fact that our theorems are stated in full generality, allowing $\mu$
to have an atomic part. In case that $\mu$ is a continuous measure -- both notions almost surely coincide.

We further denote
\begin{align*}
& \psi_k(x)= |\{j< k\ :\ x_j< x \}| - |\{j< k\ :\ x_j < m_j \}|, \\
& \psi^+_k(x)= |\{j< k\ :\ x_j\le x \}| - |\{j< k\ :\ x_j \le  m_j \}|.
\end{align*}
Observe that, by the definition of the empirical quantile $m_k$ in \eqref{eq: mk def},
\begin{equation}\label{eq: m and psi}
m_k\le x \iff y_k\le \psi^+_k(x), \quad \text{and} \quad m_k\ge  x \iff y^+_k> \psi_k(x).
\end{equation}
Also notice that
\begin{equation}\label{eq: diff psi}
\begin{cases}
&\psi_{k+1}(x) = \psi_{k}(x) + \ind(x_{k}< x) - \ind(x_{k}< m_{k}),\\
&\psi^+_{k+1}(x) = \psi^+_{k}(x) + \ind(x_{k}\le x) - \ind(x_{k}\le m_{k}),
\end{cases}
\end{equation}
so that
\begin{equation}\label{eq: psi moves}
m_k\ge x\ \Rightarrow \ \psi_{k+1}(x) \le \psi_{k}(x), \qquad
m_k\le x\ \Rightarrow \ \psi^+_{k+1}(x) \ge \psi^+_{k}(x),
\end{equation}
and {for any} $a<b$ both $\big(\psi^+_{k}(b)-\psi^+_{k}(a)\big)$
and $\big(\psi_{k}(b)-\psi_{k}(a)\big)$ are monotone increasing.
{The evolution of $m_k$, $\psi_k$ and $y_k$ for a particular sequence of members $\{x_k\}_{k=0}^5$}
is illustrated in figure~\ref{fig: psi}.
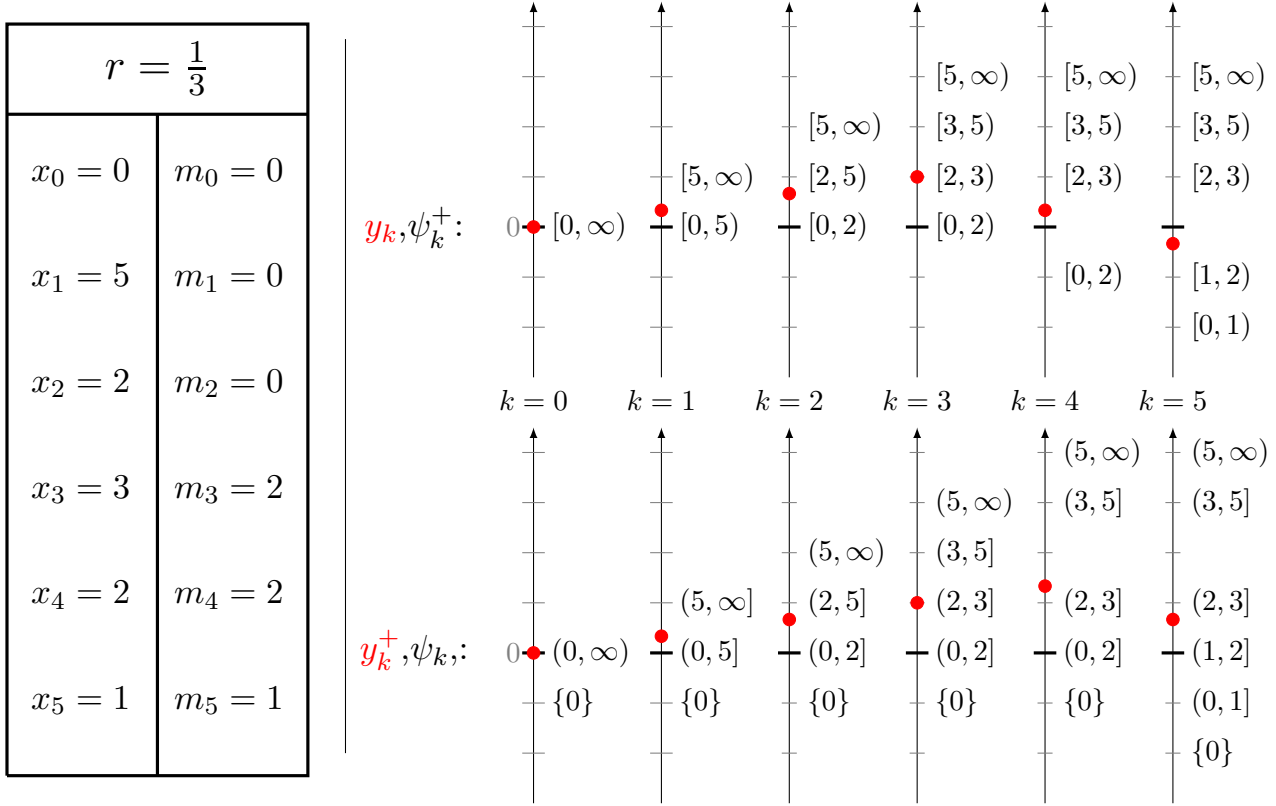
\begin{figure}
\usetikzlibrary{arrows,shapes,positioning,calc}
\begin{tikzpicture}
%\draw[latex-] (-0.5,0) -- (4.5,0) ;
\draw[very thick] (0,-7.3) -- (4,-7.3) -- (4,2.7) -- (0,2.7) -- (0,-7.3);
\draw[very thick] (0,1.5) -- (4,1.5);
\draw[very thick] (2,1.5) -- (2,-7.3);
\coordinate (T) at (2,2);
\node[below=-17pt of T]{\scalebox{1.5}{$r=\frac13$}};
\node[below=27pt of T]{\scalebox{1.2}{$x_0=0$\hspace{14pt}$m_0=0$}};
\node[below=67pt of T]{\scalebox{1.2}{$x_1=5$\hspace{14pt}$m_1=0$}};
\node[below=107pt of T]{\scalebox{1.2}{$x_2=2$\hspace{14pt}$m_2=0$}};
\node[below=147pt of T]{\scalebox{1.2}{$x_3=3$\hspace{14pt}$m_3=2$}};
\node[below=187pt of T]{\scalebox{1.2}{$x_4=2$\hspace{14pt}$m_4=2$}};
\node[below=227pt of T]{\scalebox{1.2}{$x_5=1$\hspace{14pt}$m_5=1$}};

\draw (4.5,-7) -- (4.5,2.5) ;
\node[ align=center, xshift=154, yshift=0, rotate=0, minimum height=4em](potok1){\scalebox{1.2}{$\color{red}y_k$,$\psi^+_k$:}};
\node[ align=center, xshift=154, yshift=-160, rotate=0, minimum height=4em](potok1){\scalebox{1.2}{$\color{red}y_k^+$,$\psi_k$,:}};
%\node[single arrow, draw, align=center, xshift=154, yshift=-20, rotate=0, minimum height=4em, color=red, very thick](potok1){\resizebox{15pt}{!}{$y_k$}};

\coordinate (L1) at (7,0);
\draw[latex-] (7,3) -- (7,-2)
node[below=1pt]{$k=0$};
\foreach \x in  {-2,-1,0,1,2,3,4}
\draw[shift={(7,2*\x/3)},color=gray] (-0.15,0pt) -- (0.15,0pt);
\draw[very thick] (6.85,0) -- (7.15,0);
\node[left=1pt of L1, color=gray]{0};
\node[right=3pt of L1]{$[0,\infty)$};
\draw[red,fill=red](7,0) circle (.5ex);

\draw[latex-] (8.7,3) -- (8.7,-2)
node[below=1pt]{$k=1$};
\foreach \x in  {-2,-1,0,1,2,3,4}
\draw[shift={(8.7,2*\x/3)},color=gray] (-0.15,0pt) -- (0.15,0pt);
\draw[very thick] (8.55,0) -- (8.85,0);
\coordinate (L2) at (8.7,0);
\node[right=3pt of L2]{$[0,5)$};
\node[above right=1/3 and 3pt of L2]{$[5,\infty)$};
\draw[red,fill=red] (L2)++(0,2/9) circle (.5ex);

\draw[latex-] (10.4,3) -- (10.4,-2)
node[below=1pt]{$k=2$};
\foreach \x in  {-2,-1,0,1,2,3,4}
\draw[shift={(10.4,2*\x/3)},color=gray] (-0.1,0pt) -- (0.1,0pt);
\coordinate (L3) at (10.4,0);
\draw[very thick] (10.25,0) -- (10.55,0);
\node[right=3pt of L3]{$[0,2)$};
\node[above right=1/3 and 3pt of L3]{$[2,5)$};
\node[above right=1 and 3pt of L3]{$[5,\infty)$};
\draw[red,fill=red] (L3)++(0,4/9) circle (.5ex);

\draw[latex-] (12.1,3) -- (12.1,-2)
node[below=1pt]{$k=3$};
\foreach \x in  {-2,-1,0,1,2,3,4}
\draw[shift={(12.1,2*\x/3)},color=gray] (-0.1,0pt) -- (0.1,0pt);
\draw[very thick] (11.95,0) -- (12.25,0);
\coordinate (L4) at (12.1,0);
\node[right=3pt of L4]{$[0,2)$};
\node[above right=1/3 and 3pt of L4]{$[2,3)$};
\node[above right=1 and 3pt of L4]{$[3,5)$};
\node[above right=5/3 and 3pt of L4]{$[5,\infty)$};
\draw[red,fill=red] (L4)++(0,6/9) circle (.5ex);

\draw[latex-] (13.8,3) -- (13.8,-2)
node[below=1pt]{$k=4$};
\foreach \x in  {-2,-1,0,1,2,3,4}
\draw[shift={(13.8,2*\x/3)},color=gray] (-0.1,0pt) -- (0.1,0pt);
\draw[very thick] (13.65,0) -- (13.95,0);
\coordinate (L5) at (13.8,0);
\node[below right=1/3 and 3pt of L5]{$[0,2)$};
\node[above right=1/3 and 3pt of L5]{$[2,3)$};
\node[above right=3/3 and 3pt of L5]{$[3,5)$};
\node[above right=5/3 and 3pt of L5]{$[5,\infty)$};
\draw[red,fill=red] (L5)++(0,2/9) circle (.5ex);

\draw[latex-] (15.5,3) -- (15.5,-2)
node[below=1pt]{$k=5$};
\foreach \x in  {-2,-1,0,1,2,3,4}
\draw[shift={(15.5,2*\x/3)},color=gray] (-0.1,0pt) -- (0.1,0pt);
\draw[very thick] (15.35,0) -- (15.65,0);
\coordinate (L6) at (15.5,0);
\node[below right=3/3 and 3pt of L6]{$[0,1)$};
\node[below right=1/3 and 3pt of L6]{$[1,2)$};
\node[above right=1/3 and 3pt of L6]{$[2,3)$};
\node[above right=3/3 and 3pt of L6]{$[3,5)$};
\node[above right=5/3 and 3pt of L6]{$[5,\infty)$};
\draw[red,fill=red] (L6)++(0,-2/9) circle (.5ex);

%\node[single arrow, draw, align=center, xshift=154, yshift=-120, rotate=0, minimum height=4em, very thick](potok1){\resizebox{15pt}{!}{$\psi_k$}};
%\node[single arrow, draw, align=center, xshift=154, yshift=-170, rotate=0, minimum height=4em, color=red, very thick](potok1){\resizebox{15pt}{!}{$y^+_k$}};

\coordinate (M1) at (7,-17/3);
\draw[latex-] (7,-8/3) -- (7,-23/3);
\foreach \x in  {-2,-1,0,1,2,3,4}
\draw[shift={(7,-17/3+\x*2/3)},color=gray] (-0.15,0pt) -- (0.15,0pt);
\draw[very thick] (6.85,-17/3) -- (7.15,-17/3);
\node[left=1pt of M1, color=gray]{0};
\node[below right=1/3 and 3pt of M1]{$\{0\}$};
\node[right=3pt of M1]{$(0,\infty)$};
\draw[red,fill=red] (M1)++(0,0) circle (.5ex);

\draw[latex-] (8.7,-8/3) -- (8.7,-23/3);
\foreach \x in  {-2,-1,0,1,2,3,4}
\draw[shift={(8.7,-17/3+2*\x/3)},color=gray] (-0.15,0pt) -- (0.15,0pt);
\draw[very thick] (8.55,-17/3) -- (8.85,-17/3);
\coordinate (M2) at (8.7,-17/3);
\node[below right=1/3 and 3pt of M2]{$\{0\}$};
\node[right=3pt of M2]{$(0,5]$};
\node[above right=1/3 and 3pt of M2]{$(5,\infty]$};
\draw[red,fill=red] (M2)++(0,2/9) circle (.5ex);

\draw[latex-] (10.4,-8/3) -- (10.4,-23/3);
\foreach \x in  {-2,-1,0,1,2,3,4}
\draw[shift={(10.4,-17/3+2*\x/3)},color=gray] (-0.1,0pt) -- (0.1,0pt);
\coordinate (M3) at (10.4,-17/3);
\draw[very thick] (10.25,-17/3) -- (10.55,-17/3);
\node[below right=1/3 and 3pt of M3]{$\{0\}$};
\node[right=3pt of M3]{$(0,2]$};
\node[above right=1/3 and 3pt of M3]{$(2,5]$};
\node[above right=1 and 3pt of M3]{$(5,\infty)$};
\draw[red,fill=red] (M3)++(0,4/9) circle (.5ex);

\draw[latex-] (12.1,-8/3) -- (12.1,-23/3);
\foreach \x in  {-2,-1,0,1,2,3,4}
\draw[shift={(12.1,-17/3+2*\x/3)},color=gray] (-0.1,0pt) -- (0.1,0pt);
\draw[very thick] (11.95,-17/3) -- (12.25,-17/3);
\coordinate (M4) at (12.1,-17/3);
\node[below right=1/3 and 3pt of M4]{$\{0\}$};
\node[right=3pt of M4]{$(0,2]$};
\node[above right=1/3 and 3pt of M4]{$(2,3]$};
\node[above right=1 and 3pt of M4]{$(3,5]$};
\node[above right=5/3 and 3pt of M4]{$(5,\infty)$};
\draw[red,fill=red] (M4)++(0,6/9) circle (.5ex);

\draw[latex-] (13.8,-8/3) -- (13.8,-23/3);
\foreach \x in  {-2,-1,0,1,2,3,4}
\draw[shift={(13.8,-17/3+2*\x/3)},color=gray] (-0.1,0pt) -- (0.1,0pt);
\draw[very thick] (13.65,-17/3) -- (13.95,-17/3);
\coordinate (M5) at (13.8,-17/3);
\node[below right=1/3 and 3pt of M5]{$\{0\}$};
\node[right=3pt of M5]{$(0,2]$};
\node[above right=1/3 and 3pt of M5]{$(2,3]$};
\node[above right=5/3 and 3pt of M5]{$(3,5]$};
\node[above right=7/3 and 3pt of M5]{$(5,\infty)$};
\draw[red,fill=red] (M5)++(0,8/9) circle (.5ex);

\draw[latex-] (15.5,-8/3) -- (15.5,-23/3);
\foreach \x in  {-2,-1,0,1,2,3,4}
\draw[shift={(15.5,-17/3+2*\x/3)},color=gray] (-0.1,0pt) -- (0.1,0pt);
\draw[very thick] (15.35,-17/3) -- (15.65,-17/3);
\coordinate (M6) at (15.5,-17/3);
\node[below right=3/3 and 3pt of M6]{$\{0\}$};
\node[below right=1/3 and 3pt of M6]{$(0,1]$};
\node[right=3pt of M6]{$(1,2]$};
\node[above right=1/3 and 3pt of M6]{$(2,3]$};
\node[above right=5/3 and 3pt of M6]{$(3,5]$};
\node[above right=7/3 and 3pt of M6]{$(5,\infty)$};
\draw[red,fill=red] (M6)++(0,4/9) circle (.5ex);
\end{tikzpicture}
\captionsetup{width=.8\linewidth}
\caption{The evolution of $\psi_k, \psi_k^+, \color{red}{y_k}$ and $\color{red}{y_k^+}$ over the first five steps of a sample of the process $x_k$. In this particular example $r=\frac 1 3$, and the first six admitted members hold opinions $0,5,2,3,2$ and $1$. The locations of $y_k$ (above) and $y_k^+$ (below) are depicted by a small disk, and the intervals which are mapped by $\psi^+_k$ (above) and $\psi_k$ (below) to each integer are written next to it. Observe that $m_k$ could be computed by rounding down either $y_k$ (or $y^+_k$) to the nearest integer, then rounding up (down) to the nearest element of the image, and then looking on the infimum of the corresponding preimage interval (in accordance with \eqref{eq: m and psi}). Also observe how $y_k$ and $y_k^+$
evolve in exactly the same way as long as $x_k\neq m_k$ (see \eqref{eq: y def}), and
$\psi^+_k,\psi_k$ evolve in the same way except interval endpoints, as long as the admitted members have distinct opinions (see \eqref{eq: diff psi}).}
\label{fig: psi}
\end{figure}

For given $a<b$, write
\begin{align}\label{eq: I}
I_k^{a,b} := (\psi_k(a)-\D_k,\ \psi^+_k(b)], \qquad J_k^{a,b} :=I_k^{a,b}+ \D_k, \qquad
\Delta_k := y_k^+ -y_k\ge 0.
% [\psi_k(a),\psi^+_k(x_0)).
\end{align}
In what follows, we fix such $a<b$ and write $I_k=I^{a,b}_k$ and $J_k=J_k^{a,b}$ for short.
We turn to prove a few short claims concerning these sets.

\begin{clm}\label{clm: in I}
$ y_k^+ \in J_k  \iff y_k \in I_k \iff a\le m_k \le b$.
\end{clm}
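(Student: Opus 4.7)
The plan is to prove both equivalences by simply unpacking the definitions of $I_k$, $J_k$, $\Delta_k$ and using the characterization of $m_k$ provided by \eqref{eq: m and psi}. The statement is essentially a notational repackaging, so the only ``content'' is recognizing which inequalities correspond to which.

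First I would handle the equivalence $y_k \in I_k \iff a \le m_k \le b$. By definition of $I_k$, the membership $y_k \in (\psi_k(a) - \Delta_k,\ \psi_k^+(b)]$ is the conjunction of $y_k > \psi_k(a) - \Delta_k$ and $y_k \le \psi_k^+(b)$. Using $\Delta_k = y_k^+ - y_k$, the first inequality rewrites as $y_k^+ > \psi_k(a)$, which by \eqref{eq: m and psi} is exactly $m_k \ge a$. The second inequality is precisely $m_k \le b$ by the other half of \eqref{eq: m and psi}. Taking the conjunction yields $a \le m_k \le b$.

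Next I would treat $y_k^+ \in J_k \iff a \le m_k \le b$ by the same translation argument. Since $J_k = I_k + \Delta_k = (\psi_k(a),\ \psi_k^+(b) + \Delta_k]$, the condition $y_k^+ \in J_k$ splits as $y_k^+ > \psi_k(a)$ and $y_k^+ \le \psi_k^+(b) + \Delta_k$. The first inequality is again $m_k \ge a$ via \eqref{eq: m and psi}, and the second, after subtracting $\Delta_k$, is $y_k \le \psi_k^+(b)$, which is $m_k \le b$. Combining, the conjunction is $a \le m_k \le b$, which closes the chain of equivalences.

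No real obstacle is expected here: the whole point of the translation by $\Delta_k$ in the definition of $I_k$ is precisely to make $y_k \in I_k$ equivalent to $y_k^+ \in J_k$, and both to the double inequality on $m_k$. A one-line proof invoking \eqref{eq: m and psi} twice should suffice.
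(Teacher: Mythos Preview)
Your proof is correct and essentially identical to the paper's own proof: both simply unpack the definitions of $I_k$, $J_k$, $\Delta_k$ and apply the two characterizations in \eqref{eq: m and psi}. The only cosmetic difference is that the paper groups the chain $y_k^+ \le \psi_k^+(b)+\Delta_k \iff y_k \le \psi_k^+(b) \iff m_k \le b$ (and similarly for the lower bound) in one line each, whereas you establish $y_k\in I_k \iff a\le m_k\le b$ and $y_k^+\in J_k \iff a\le m_k\le b$ separately.
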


\begin{proof}
Using \eqref{eq: m and psi}, we have $y_k^+ \le \psi_k^+(b) +\D_k \iff y_k \le \psi_k^+(b) \iff m_k \le b$. Similarly, we have
$y_k > \psi_k(a)-\D_k \iff y_k^+ > \psi_k(a) \iff m_k\ge a$.
%Using \eqref{eq: m and psi}, we see that $y_k \le \psi_k^+(x_0)$ is equivalent to $m_k \le x_0$.
%Moreover $y_k > \psi_k(a)-\D_k$ is equivalent to $y_k^+ > \psi_k(a)$, which again by \eqref{eq: m and psi} is equivalent to $m_k\ge a$.
\end{proof}

\begin{clm}\label{clm: I increase}
$|I_{k+1}| - |I_k|  = |J_{k+1}|-|J_k|= \emph{\ind}\{a\le x_{k}\le b\}$. \\ In particular
$|I_k|$ and $|J_k|$ form the same integer-valued non-decreasing sequence, started at $|I_0|=0$.
\end{clm}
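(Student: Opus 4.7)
The plan is a direct bookkeeping computation: express $|I_k|$ as an integer combination of $\psi_k^+(b)$, $\psi_k(a)$, and $\Delta_k$, and evaluate the one-step change of each term using the recursions already recorded in \eqref{eq: diff psi} and \eqref{eq: y def}.

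First I would note that $\psi_k(a)$, $\psi_k^+(b)$ are integer-valued by definition, and $\Delta_k = y_k^+ - y_k = |\{j<k : x_j = m_j\}|$ is also a non-negative integer (this also justifies $\Delta_k\ge 0$ written in \eqref{eq: I}). Hence the half-open interval $I_k = (\psi_k(a) - \Delta_k,\, \psi_k^+(b)]$ has integer endpoints and length $|I_k| = \psi_k^+(b) - \psi_k(a) + \Delta_k$. Since $J_k$ is a translate of $I_k$ by $\Delta_k$, the two sets have equal length, which immediately yields $|J_{k+1}| - |J_k| = |I_{k+1}| - |I_k|$, so it suffices to handle $|I_k|$.

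Next I would compute the three one-step increments separately. From \eqref{eq: diff psi},
\[
\psi_{k+1}(a) - \psi_k(a) = \ind(x_k < a) - \ind(x_k < m_k), \qquad
\psi_{k+1}^+(b) - \psi_k^+(b) = \ind(x_k \le b) - \ind(x_k \le m_k).
\]
From \eqref{eq: y def} one reads off $y_{k+1} - y_k = r - \ind(x_k \le m_k)$ and $y_{k+1}^+ - y_k^+ = r - \ind(x_k < m_k)$, so $\Delta_{k+1} - \Delta_k = \ind(x_k = m_k)$. Adding these three contributions, the indicators involving $m_k$ collapse via the identity $-\ind(x_k \le m_k) + \ind(x_k < m_k) + \ind(x_k = m_k) = 0$, leaving
\[
|I_{k+1}| - |I_k| = \ind(x_k \le b) - \ind(x_k < a) = \ind(a \le x_k \le b),
\]
which is the desired formula. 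The \textquotedblleft in particular\textquotedblright{} clause then follows immediately: $\psi_0(a) = \psi_0^+(b) = 0$ and $\Delta_0 = 0$ give $|I_0| = 0$; each increment is either $0$ or $1$, so both sequences take the same integer values and are non-decreasing.

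I do not anticipate a genuine obstacle: the statement is a bookkeeping identity. The only mild subtlety worth flagging is that the role of $\Delta_k$ is exactly to absorb the mismatch between the strict and non-strict versions of the indicator evaluations at $m_k$ — without tracking $\Delta_k$ the cancellation would fail whenever $x_k = m_k$, which is precisely the case when $\mu$ has atoms.
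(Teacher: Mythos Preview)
Your proof is correct and essentially identical to the paper's: both write $|I_k| = \psi_k^+(b) - \psi_k(a) + \Delta_k$, apply the one-step recursions from \eqref{eq: diff psi} and \eqref{eq: y def}, and observe that the $m_k$-dependent indicators cancel. Your remark on the role of $\Delta_k$ in handling the atomic case is a nice addition but not something the paper makes explicit.
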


\begin{proof}
By \eqref{eq: diff psi}, we have:
\begin{align*}
|I_{k+1}| - |I_k|
&= \psi_{k+1}^+(b) -\psi_k^+(b)  - (\psi_{k+1}(a) - \psi_{k}(a) ) + \D_{k+1}-\D_k
\\& =\ind\{ x_{k}\le b\} - \ind\{x_{k}\le m_{k}\}
- \big(\ind\{ x_{k}<a\} - \ind\{x_{k}<m_{k}\}\big)
\\ & \qquad+ \ind\{x_{k}\le m_{k}\} -\ind\{x_{k} < m_{k}\}
\\ & = \ind\{x_{k}\le b\} - \ind\{x_{k} < a\}
 = \ind\{a\le x_{k}\le b \}.
\end{align*}
Which establishes the claim for $I_k$, and hence for $J_k$, as, by \eqref{eq: I}, $J_k=I_k+\Delta$.
\end{proof}

\begin{prop}\label{prop: Ik=1}
Suppose that $\mu([a,b])>0$.
Then, almost surely, either $m_k > b$ for all sufficiently large $k$, or there exists $\ell\in\N$ such that
$x_\ell\in [a,b]$.
\end{prop}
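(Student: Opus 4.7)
The plan is to argue by contradiction using L\'evy's conditional Borel–Cantelli lemma. The first observation I would make is that, by the definition of $m_k$ in \eqref{eq: mk def}, for each $k\ge 1$ the quantile $m_k$ coincides with the $\lceil rk\rceil$-th order statistic of $\{x_0,\dots,x_{k-1}\}$; in particular $m_k\in\{x_j:j<k\}$. Consequently, on the event $N:=\{x_\ell\notin[a,b] \text{ for every } \ell\in\N\}$ we automatically have $m_k\notin[a,b]$ for every $k\ge 1$, so each such $m_k$ satisfies either $m_k<a$ or $m_k>b$. It therefore suffices to show that $\Pro\big(N\cap \{m_k<a\ \text{i.o.}\}\big)=0$, since combined with $m_k\notin[a,b]$ this will force $m_k>b$ for all sufficiently large $k$ on $N$, as required.

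The second step I would take is to establish a uniform lower bound on $\Pro(x_k\in[a,b]\mid \cF_{k-1})$ on the event $B_k:=\{m_k<a\}\in\cF_{k-1}$. The key inclusion is $\{X\in[a,b],\,Y\in[a,b]\}\subseteq \{\min(X,Y)\in[a,b]\}\cap D_{m_k}$, valid on $B_k$, because $X,Y\in[a,b]$ forces $X+Y\ge 2a>2m_k$. Since $x_k$ is distributed as $\min(X,Y)$ conditioned on $D_{m_k}$, and $\Pro(D_{m_k})\le 1$, this gives
\begin{equation*}
\Pro(x_k\in[a,b]\mid\cF_{k-1})\;=\;\frac{\Pro\big(\min(X,Y)\in[a,b],\, D_{m_k}\big)}{\Pro(D_{m_k})}\;\ge\; \mu([a,b])^2
\end{equation*}
on $B_k$, a bound which is crucially independent of the value of $m_k$.

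Finally, setting $A_k:=\{x_k\in[a,b]\}\cap B_k\in\cF_k$, the previous display yields $\Pro(A_k\mid\cF_{k-1})\ge \mu([a,b])^2\,\ind_{B_k}$, so on the event $\{B_k\ \text{i.o.}\}$ these conditional probabilities sum to infinity almost surely. L\'evy's conditional Borel–Cantelli lemma then forces $A_k$ to occur for some (indeed infinitely many) $k$ almost surely on this event; in particular $x_\ell\in[a,b]$ for some $\ell$, contradicting $N$. We conclude $\Pro\big(N\cap\{B_k\ \text{i.o.}\}\big)=0$, completing the argument.

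I do not anticipate any serious obstacle here. The whole argument rests on the order-statistic identification of $m_k$, together with the trivial observation that once $m_k<a$, the product event $\{X,Y\in[a,b]\}$ sits inside both $\{\min(X,Y)\in[a,b]\}$ and the admission event $D_{m_k}$. This makes the uniform lower bound $\mu([a,b])^2$ immediate, with no need to invoke Lemma~\ref{lem:as bounded}, continuity of $\mu$, or any analysis of the drift $\rho$.
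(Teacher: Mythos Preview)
Your proof is correct and follows essentially the same route as the paper: both establish the uniform lower bound $\Pro(x_k\in[a,b]\mid\cF_{k-1})\ge\mu([a,b])^2$ on $\{m_k\le a\}$ via the inclusion $\{X,Y\in[a,b]\}\subseteq\{\min(X,Y)\in[a,b]\}\cap D_{m_k}$, and then apply the conditional Borel--Cantelli lemma. The only difference is in the final step, where you use the order-statistic identification $m_k\in\{x_j:j<k\}$ to conclude $m_k\notin[a,b]$ on $N$, whereas the paper routes this through Claims~\ref{clm: in I} and~\ref{clm: I increase} (namely, $x_k\notin[a,b]$ for all $k$ forces $|I_k|=0$, hence $m_k\notin[a,b]$); your version is arguably more direct.
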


\begin{proof}
We assume
$\Pro(m_k \le a)\ge \Pro(x_0,x_1,\dots,x_{k-1} \le a)>0$, as otherwise the claim is straightforward. Hence,\vspace{-5pt}
%$\exists k: \ |I_k|=1 \iff \exists k: x_k\in [a,x_0]$. Note that
\begin{align}
%\Pro( x_{k+1} \in [a,x_0] \mid \cF_k, y_{k}^{-} \le \psi_k^{+} (x_0) )
\Pro( x_{k} \in [a,b] \mid \cF_{k-1}, \ m_k \le a)& \notag
\ge \Pro(\min(X,Y) \in [a,b]\ |\ \tfrac{X+Y}2\ge a)\\ &
\ge \Pro(X,Y \in [a,b]) = \mu( [a,b])^2>0.  \label{eq: low}
\end{align}

By the Borel-Cantelli lemma, this implies that almost surely one of the following holds:
\begin{mitemize}
\item there exists $\ell$ such that $x_{\ell}\in [a,b]$.
\item  $x_{k}\notin [a,b]$ for all $k$ and there exists $\ell_0$ such that for all $\ell>\ell_0$ we have $m_{\ell}>a$.
%$y_k > \psi_k^+(x_0)$
\end{mitemize}
From Claim~\ref{clm: I increase} it follows that in the former case $|I_\ell|=1$ for some $\ell$, while in the latter case $m_k>b$ for all $k>\ell_0$.
The claim follows.
\end{proof}

In the following proposition we capture the idea that the probability that $m_k$ crosses an interval of negative drift decays exponentially in the number of club members whose opinion lies in that interval.
Recall the notation $\nu_{k}(I) = \left| \{j \le k: \: x_j\in I \}\right|$.

\begin{prop}\label{prop: ell people}
Let $[a,b]$ be an interval of positive measure-$\mu$ such that $\rho(x)<-\ep<0$ for all $x\in [a,b]$. Then there exists $q=q\Big(\ep, \frac{\mu([a,b])}{\mu([a,\infty))} \Big)\in (0,1)$ such that
for all $\ell, t_0\in\N$:
%\textcolor{red}{HERE! can $\ell$ be $0$?}
\begin{equation}\label{eq: ell people}
%\Pro\Big( \exists k>t_0: \ m_k \ge b \ \Big| \ \cF_{t_0}, m_{t_0}\le a, |\{j\le t_0:\ x_j\in [a,b] \}|\ge \ell \Big) <q^\ell.
\Pro\Big( \exists k>t_0: \ m_k \ge b \ \Big| \ \cF_{t_0-1}, m_{t_0}\le a, \ \nu_{t_0-1}([a,b])\ge \ell \Big) <q^\ell.
\end{equation}
\end{prop}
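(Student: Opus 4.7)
My plan is to recast the event $\{m_k \ge b\}$ as a first-passage event for an auxiliary positive-drift random walk, and then apply the ruin estimate of Lemma~\ref{lem: ruin2}. Concretely, I would set
\[
L_k := \psi^+_k(b) - y_k,
\]
a process adapted to $\cF_k$. By \eqref{eq: m and psi}, $L_k \ge 0 \iff m_k \le b$, so the failure event corresponds to $\{L_k < 0 \text{ for some } k > t_0\}$. Combining \eqref{eq: y def} and \eqref{eq: diff psi}, one finds $L_{k+1} - L_k = \ind(x_k \le b) - r \in \{1-r, -r\}$, so $|L_{k+1} - L_k| \le 1$. On the event $\{m_k \le b\}$, monotonicity of $F_m$ in $m$ together with the hypothesis $\rho(b) < -\ep$ yield $\mu((b,\infty))^2 / F_{m_k} \le \mu((b,\infty))^2 / F_b < (1-r) - \ep$, and hence
\[
\E[L_{k+1} - L_k \mid \cF_k] = 1 - \frac{\mu((b,\infty))^2}{F_{m_k}} - r \ge -\rho(b) > \ep,
\]
so $L$ carries a uniform positive drift for as long as $L_k \ge 0$. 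Moreover, the conditioning $m_{t_0} \le a$ gives $y_{t_0} \le \psi^+_{t_0}(a)$, whence $L_{t_0} \ge \psi^+_{t_0}(b) - \psi^+_{t_0}(a) = |\{j < t_0 : a < x_j \le b\}|$, which in the absence of a $\mu$-atom at $a$ coincides with $\nu_{t_0-1}([a,b]) \ge \ell$.

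To extract the $q^\ell$ bound I would apply Lemma~\ref{lem: ruin2} to the negated, reset walks $X^{(i)}_k := L_{\sigma_i} - L_{\sigma_i + k}$, where $\sigma_i$ is the $i$-th time after $t_0$ at which $m_k$ transitions from $\le a$ into $(a,b]$ and $\tau_i$ is the end of the ensuing excursion. Inside each episode $X^{(i)}$ has negative drift $\le -\ep$ and unit-bounded steps; the cumulative barrier $\sum_{j \le i}\ell_j$ plays the role of the initial gap $L_{\sigma_0}$ together with the further growth of $L$ accumulated from $[a,b]$-admissions between episodes. Each admission with $x_k > a$ (the only type that can trigger an upcrossing) falls inside $(a,b]$ with conditional probability
\[
\frac{\mu((a,\infty))^2 - \mu((b,\infty))^2}{\mu((a,\infty))^2} \ge \frac{\mu((a,b])}{\mu((a,\infty))},
\]
which supplies the lower bound $p$ on $\Pro(\ell_i > 0 \mid \cF^{(i-1)}_1)$ required by Lemma~\ref{lem: ruin2} and accounts for the appearance of $\mu([a,b])/\mu([a,\infty))$ in the bound on $q$.

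The main technical subtlety I foresee is the bookkeeping around atoms at $a$ (which separate $\psi_k$ from $\psi^+_k$ and $\nu_{t_0-1}([a,b])$ from the initial lower bound on $L_{t_0}$); the deficit $|\{j<t_0 : x_j = a\}|$ can be absorbed by shrinking $\ell_0$ by a factor comparable to $\mu((a,b])/\mu([a,\infty))$, so that the ratio in $q$ dominates it. Once this accounting is in place the verification of the hypotheses of Lemma~\ref{lem: ruin2} is mechanical, and the lemma delivers the desired exponential bound.
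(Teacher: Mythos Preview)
Your choice of $L_k=\psi^+_k(b)-y_k$ is actually a cleaner object than what the paper uses. The paper tracks $y_k-y_{t_i}$ during excursions of $m_k$ inside $[a,b]$; since the drift of $y_k$ is $\rho(m_k)$, it is only controlled while $m_k\in[a,b]$, which forces the excursion decomposition and the appeal to Lemma~\ref{lem: ruin2}. Your $L_k$ has increment $\ind(x_k\le b)-r$, whose conditional mean is $1-\mu((b,\infty))^2/F_{m_k}-r\ge -\rho(b)>\ep$ on the \emph{whole} region $\{m_k\le b\}$, using only the single hypothesis $\rho(b)<-\ep$. In the atomless case your argument therefore needs neither episodes nor Lemma~\ref{lem: ruin2}: Lemma~\ref{lem: ruin} applied to $k\mapsto L_{t_0}-L_{t_0+k}$ already gives the bound, with $q=q(\ep)$ independent of $\mu([a,b])/\mu([a,\infty))$, which is stronger than stated. (A minor indexing point: the drift should be conditioned on $\cF_{k-1}$, not $\cF_k$, since $x_k\in\cF_k$ already determines $L_{k+1}$.)

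The genuine gap is the atom at $a$. Under the conditioning one may have $m_{t_0}=a$ with all $\ell$ counted members sitting exactly at $a$; then $\psi^+_{t_0}(b)-\psi^+_{t_0}(a)=0$ and your $L_{t_0}$ can be $0$. No ``shrinking of $\ell_0$ by a factor'' repairs this: the deficit is not a proportion of $\ell$ but potentially all of it, and your episodes do not manufacture an initial barrier either, since Lemma~\ref{lem: ruin2} only returns $q^{\ell_0}$. The paper sidesteps this by working with $|I_k|=\psi^+_k(b)-\psi_k(a)+\Delta_k$, where the correction $\Delta_k=y_k^+-y_k$ is exactly what credits members at $a$ (Claim~\ref{clm: I increase}), and by using the \emph{strict} inequality $m_{t_i-1}<a$ at excursion entries to tie $y^+_{t_i}$ to $\psi_{t_i}(a)$ in~\eqref{eq: for passing 2}. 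To push your route through in general you would need an analogous device---e.g.\ replacing $\psi^+_{t_0}(a)$ by $\psi_{t_0}(a)-\Delta_{t_0}$ in the initial bound, which in turn requires handling the first excursion separately when $m_{t_0}=a$.
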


\begin{proof}%[Proof of Proposition \ref{prop: ell people}]
Denote $I_k=I^{a,b}_k$ as in \eqref{eq: I}. In light of \eqref{eq: m and psi}, Claim \ref{clm: in I} and Claim \ref{clm: I increase} it would suffice to show that,
for all $\ell, t_0\in\N$,
\begin{equation}\label{eq: goal}
\Pro\Big(   \exists k>t_0: \  y_k >\psi_k^+(b) \, \Big| \,   \cF_{t_0-1}, y_{t_0} < I_{t_0}, |I_{t_0}| \ge \ell \Big)<q^\ell.
\end{equation}

Define inductively for $i>0$
\begin{align*}
t_i&=\min\{k>t_{i-1} : \ m_{k-1}<a, \ m_k\ge a\},\\
%y^{-}_k\in I_k, \ y^{-}_{k-1}<\psi_{k-1}^-(a)
s_i&=\min\{k\ge t_i :\  \quad a\le m_k\le b, \ m_{k+1}\not \in [a,b] \},
%y^{-}_k\in I_k, \ y^{-}_{k-1}\ge \psi_{k-1}^+(x_0).
%r_i&=\min\{k\ge t_i : \ a\le m_k\le x_0, \ m_{k+1}<a \}
\end{align*}
By Claim \ref{clm: in I}, $t_i$ are times in which the walk $y_k$ entered the interval $I_k$ from below while $s_i$ are times in which $y_k$ is about to exit the interval in its next step.
Denote $p:=\frac{\Pro(X \in [a,b])}{\Pro(X\in [a,\infty))}$ and observe that $p>0$.
By Claim \ref{clm: I increase} and the definition of the quantile process, we almost surely have,
\begin{align}\label{eq: p0}
\Pro\big(|I_{t_i}|>|I_{t_i-1}|\ \big|\ \cF_{t_i-1} \big) & \ge \Pro(x_{t} \in [a,b] \mid \cF_{t-1}, m_{t-1}\le a, m_t\ge a)\notag
\\ & \ge\Pro(\min(X,Y) \in [a,b]\ |\ \tfrac{X+Y}2\ge a)\ge p.
\end{align}

Write $i_{\infty}=\min\{i: \ t_i=\infty\}$.
%Under the event $t_\infty=\infty$, we have $t_i<\infty$ for all $i\in \N_0$.
For each $i<i_\infty$, and $k\in [t_i,s_i]$ write $X_k^{(i)}=y_k -y_{t_i}$ and observe that this is
a random walk started at $X_0^{(i)}=0$ %and stopped when $y_k\not\in I_k$.
and satisfying $X_{k+1}^{(i)} - X_k^{(i)} = y_{k+1} - y_k$ for $k\in [t_i,s_i]$.
%Moreover, $\{X^{(i)}\}_{i<i_\infty}$ obey
%If $i_\infty<\infty$, for every $i\ge i_\infty$ define $X^{(i)}$
By our premise and \eqref{eq: rho def} we have $\E\Big(X_{k+1}^{(i)} \mid X_0^{(i)},\dots,X_k^{(i)}, \cF_{t_i-1} \Big) \le X_k^{(i)}-\ep$ for all $k\in\N_0$. Further observe that $\{X^{(i)}_j\}_{j\in\N,i\in[k]}$ are $\cF_{s_i}$ measurable.
By \eqref{eq: y pm} we have $|X^{(i)}_{k+1}-X^{(i)}_k|\le 1 $ for any $i$ and $k$.
Denote $\ell_j=|I_{t_j}|-|I_{t_{j-1}}|$ for $j\ge 1$ and $\ell_0 =|I_{t_0}|$.
Consequently, for $i\in \N_0$ we have $L_i := \sum_{j\le i} \ell_j = |I_{t_i}|$.
By Claim~\ref{clm: I increase}, $\ell_j\ge 0$ for all $j$ and by \eqref{eq: p0}
we have $\Pro(\ell_j>0 \mid \cF_{t_j-1}) >p$
%$\Pro(\ell_j>0 \mid \ell_1,\dots,\ell_{j-1}) >p$
for all $j$. Since $\ell_j$ is $F_{t_j-1}$ measurable, and since $t_{i+1}\ge s_{i}+1$
we conclude that the random processes $\{X_k^{(i)}\}$ and the process $\{\ell_j\}$ obey the conditions of Lemma
\ref{lem: ruin2} with $\eta=\ep$,
$\cF^{(i)}_0 = \cF_{t_i-1}$ and $\cF^{(i)}_1 = \cF_{s_i}$.
We deduce that
$$\Pro\Big(\exists i,k\in \N\text{ s.t. }X^{(i)}_k \ge L_i\Big) \le q^{\ell },$$
for some $q\in (0,1)$ which depends only on $\ep$ and $p$.

In order to obtain \eqref{eq: goal} it is enough to show that
\begin{equation}\label{eq: we pass the interval}
\forall k\in [t_i,s_i]: \qquad \big\{ X^{(i)}_k \ge L_i \big\} \supseteq \big\{y_k > \psi_k^+(b)\big\}.
\end{equation}
Using the fact that $\{L_i\}$ is integer-valued together with \eqref{eq: I}, we have:
\[
\{ X^{(i)}_k \ge L_i \}=
\{ X^{(i)}_k > L_i-1 \}=
\{ y_k >  y_{t_i}  + |I_{t_i}| -1\}=
\{ y_k > \psi^+_{t_i}(b)-\psi_{t_i}(a)+y^+_{t_i} -1\},\]
so that in order to obtain \eqref{eq: we pass the interval} it would suffice to show that
\begin{equation}\label{eq: for passing the interval}
\psi_k^+ (b) \ge  \psi^+_{t_i}(b)-\psi_{t_i}(a)+y^+_{t_i} -1.
\end{equation}
Recalling that $m_j\le b$ for all $j\in[t_i,s_i]$ and using
\eqref{eq: psi moves} we have $\psi_{j+1}^+(b)\ge \psi_j^+(b)$ for all such $j$, and thus
\begin{equation}\label{eq: for passing 1}
\psi_k^+ (b) \ge  \psi^+_{t_i}(b).
\end{equation}
Next, recalling the definition of $t_i$, we have $m_{t_i-1}< a$, so that by
\eqref{eq: m and psi} we obtain $y^+_{t_i-1}\le \psi_{t_i-1}(a)$.
By \eqref{eq: mk def} we have $x_{t_i-1}>a>m_{t_i-1}$ and hence, by \eqref{eq: diff psi}, we deduce that
$\psi_{t_i}(a) = \psi_{t_i-1}(a)$. %+ \ind(x_{t_i-1}<a)+\ind(x_{t_i-1}<m_{t_i-1}) = \psi_{t_i-1}(a)$.
Putting these observations together we obtain
\begin{equation}\label{eq: for passing 2}
y^+_{t_i} = y^+_{t_i-1} +r \le \psi_{t_i-1}(a) +r =  \psi_{t_i}(a) +r < \psi_{t_i}(a)+1.
\end{equation}
Inequalities \eqref{eq: for passing 1} and \eqref{eq: for passing 2} imply \eqref{eq: for passing the interval}, concluding the proof.

%(one may go back to \eqref{eq: ell people} using  $\{y_k> I_k \}=\{ m_k > b\}$).
\end{proof}

\begin{prop}\label{prop: ell people rev}
Let $[b,a]$ be an interval with $\mu([b,a])>0$, such that $\rho^+(x)>\ep>0$ for all $x\in [b,a]$. Then there exists $q=q(\ep, \mu([b,a]))$ such that
for all $\ell, t_0\in\N_0$:
\begin{equation}\label{eq: ell people rev}
\Pro\Big( \exists k>t_0: \ m_k \le b \ \Big| \ \cF_{t_0-1}, m_{t_0}\ge a, \ \nu_{t_0-1}( [b,a])\ge \ell \Big) <q^\ell.
\end{equation}
\end{prop}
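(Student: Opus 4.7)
The plan is to follow the proof of Proposition~\ref{prop: ell people} with all directions symmetrized: use $y^+_k$ in place of $y_k$, define entry times into $[b,a]$ from above (rather than into $[a,b]$ from below), and invoke the positive-drift hypothesis $\rho^+ > \ep$ in place of the negative-drift hypothesis $\rho<-\ep$. With $I_k := I^{b,a}_k$ as in \eqref{eq: I}, Claims~\ref{clm: in I} and \ref{clm: I increase} together with \eqref{eq: m and psi} translate \eqref{eq: ell people rev} into an estimate on the first time $y^+_k$ falls strictly below the (shifted) interval $J_k^{b,a}$.

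Define inductively the stopping times
\begin{align*}
t_i &= \min\{k > t_{i-1} : m_{k-1} > a,\ m_k \le a\},\\
s_i &= \min\{k \ge t_i : b \le m_k \le a,\ m_{k+1} \notin [b,a]\},
\end{align*}
and put $X^{(i)}_k := y^+_{t_i} - y^+_k$ for $k \in [t_i, s_i]$. By \eqref{eq: y pm} this walk has $X^{(i)}_0 = 0$ and $|X^{(i)}_{k+1} - X^{(i)}_k| \le 1$, and by \eqref{eq: rho def} together with $\rho^+ > \ep$ on $[b,a]$,
\[
\E\bigl(X^{(i)}_{k+1} \mid X^{(i)}_0,\dots,X^{(i)}_k, \cF_{t_i-1}\bigr) \le X^{(i)}_k - \ep.
\]
Set $\ell_0 := |I_{t_0}|$, $\ell_j := |I_{t_j}| - |I_{t_{j-1}}|$ and $L_i := \sum_{j \le i} \ell_j = |I_{t_i}|$ (by Claim~\ref{clm: I increase}). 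The key quantitative input, mirroring \eqref{eq: p0}, is a lower bound $\Pro(\ell_j > 0 \mid \cF_{t_j-1}) \ge p$ for some $p = p(\mu([b,a])) > 0$. Granting this, Lemma~\ref{lem: ruin2} applied with $\eta = \ep$, $\cF^{(i)}_0 = \cF_{t_i - 1}$ and $\cF^{(i)}_1 = \cF_{s_i}$ yields $q = q(\ep, p) \in (0,1)$ with $\Pro(\exists i, k : X^{(i)}_k \ge L_i) \le q^{\ell_0}$.

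The final step is to verify the inclusion of $\{m_k \le b\}$ (for $k \in [t_i,s_i]$) in $\{X^{(i)}_k \ge L_i\}$. This mirrors \eqref{eq: for passing the interval}--\eqref{eq: for passing 2} and relies on: (i) $\psi_k(b)$ and $\psi^+_k(b)$ are non-increasing on $[t_i, s_i]$ (by \eqref{eq: psi moves} applied with $x = b$ and $m_k \ge b$); (ii) $\D_k$ is non-decreasing by \eqref{eq: diff psi}; (iii) at the entry time, $m_{t_i - 1} > a$ combined with \eqref{eq: m and psi}, \eqref{eq: mk def} and \eqref{eq: diff psi} produces a bound relating $y^+_{t_i}$ to $\psi^+_{t_i}(a)$ up to a fractional $r$-shift. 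Substituting the identity $L_i = \psi^+_{t_i}(a) - \psi_{t_i}(b) + \D_{t_i}$ from \eqref{eq: I} and using integrality then closes the bookkeeping and yields \eqref{eq: ell people rev}.

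The main obstacle is establishing the lower bound $\Pro(\ell_j > 0 \mid \cF_{t_j - 1}) \ge p(\mu([b,a]))$. In the original proof this was essentially immediate: the relation $m_{t_i - 1} < a$ made the admission threshold $(X+Y)/2 \ge m_{t_i - 1}$ weaker than $(X+Y)/2 \ge a$, so the sub-event $\{X, Y \in [a,b]\}$ simultaneously guaranteed admission and forced $x_{t_i - 1} \in [a,b]$. In the reverse setting $m_{t_j - 1}$ may exceed $a$ by an arbitrary amount, so the sub-event $\{X, Y \in [b,a]\}$ need not even satisfy the admission threshold $(X+Y)/2 \ge m_{t_j - 1}$. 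To circumvent this one must exploit the transition constraint $m_{t_j - 1} > a$, $m_{t_j} \le a$, which forces the admitted member $x_{t_j-1}$ to lie near or below $a$, and then analyze the conditional law of $x_{t_j - 1} \sim \min(X,Y) \mid (X+Y)/2 \ge m_{t_j-1}$ restricted to this transition in order to extract a lower bound depending only on $\mu([b,a])$.
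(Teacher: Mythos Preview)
Your proposal is correct and matches the paper's approach: the authors likewise note that the proof mirrors that of Proposition~\ref{prop: ell people} (with $\rho^+$, $y^+$, $J_k$ in place of $\rho$, $y$, $I_k$) and identify the counterpart of \eqref{eq: p0} as the only step requiring a new argument. For that step the paper carries out precisely your plan---observing that the transition $m_{t-1}>a$, $m_t\le a$ forces $x_{t-1}\le a$, then conditioning on the larger of $X,Y$ to obtain $\Pro(x_{t-1}\in[b,a]\mid\cF_{t-1},\,m_{t-1}>a,\,m_t\le a)=\Pro(X\in[b,a]\mid X\in[2m_{t-1}-y,a])\ge \mu([b,a])/\mu((-\infty,a])\ge\mu([b,a])$---so $p=\mu([b,a])$ suffices.
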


\begin{proof}
Under appropriate changes, such as replacing $\rho$ with $\rho^+$ and the intervals $I_k$ with $J_k$ (defined in \eqref{eq: I}), most of
the proof is identical to that of Proposition \ref{prop: ell people}. The only significant difference is in the proof of a counterpart of \eqref{eq: p0}, namely:
\begin{equation}\label{eq: p0 for J}
\Pro(|J_{t}|> |J_{t-1}|\mid\cF_{t-1},m_{t-1}>a, m_t\le a ) \ge \frac{\mu([b,a])}{\mu((-\infty,a])} \ge \mu([b,a]).
\end{equation}
%where  $t$ is a time in which $y_k^+$ entered the interval $J_k$ from above.
Thus, we provide here only the proof of \eqref{eq: p0 for J}. To this end,
we observe, using \eqref{eq: mk def},
that the event $\{m_{t-1}>a, \ m_t\le a\}$
occurs if the median of $x_0,\dots, x_{t-2}$ is above~$a$ (i.e. $\nu_{t-2}((-\infty,a])\ge r(t-1)$),
the next accepted member's opinion lies in $(-\infty,a]$ (i.e. $x_{t-1}\le a$), and
this causes the new median $m_t$ to be less then $a$ (i.e. $\nu_{t-2}((-\infty,a])+1<rt$).
%However the new candidates $X,Y$ are independent of this information on $\nu_{t-1}([0,a])$.
We thus have:
\begin{align*}
\Pro(|J_{t}|> &|J_{t-1}|\mid\cF_{t-1}, m_{t-1}>a, m_t\le a )
\\ & = \Pro(x_{t-1} \in [b,a] \mid \cF_{t-1}, m_{t-1}> a, m_t\le a)
\\ & = \Pro(x_{t-1} \in [b,a] \ |\ m_{t-1}, r(t-1)\le  \nu_{t-2}((-\infty,a]) < rt-1 , x_{t-1}\le a ),\\
&=\Pro\left(\min(X,Y) \in [b,a]\ |\  m_{t-1}, m_{t-1}>a, \tfrac{X+Y}2\ge  m_{t-1},  \min(X,Y)\le a\right)
\end{align*}
The first equality follows from Claim \ref{clm: I increase}.
In the last equality we have used the fact that the distribution of $x_{t-1}$ is equal to that of $\min(X,Y)$ conditioned on $\tfrac{X+Y}{2}\ge m_{t-1}$,
where $X$ and $Y$ are i.i.d. $\mu$-distributed random variables.
%Since $X$ and $Y$ are i.i.d., conditioned on $\tfrac{X+Y}2\ge  m$, at least one of them must be in $[m,\infty)$.
We then observe that for any fixed values $m>a$ and $y\ge m$ we have
\begin{align*}
\Pro(\min(X,Y) &\in [b,a]\ |\ m_{t-1}=m>a, \tfrac{X+Y}2\ge  m,  \min(X,Y) \in [0,a], Y=y)\\
&=\Pro\left(X \in [b,a]\ |\ X\in [2m-y, a]\right)\\
&\ge  \Pro\left(X \in [b,a]\ |\  X \in [0,a]\right) = \frac{\mu([b,a]}{\mu((-\infty,a])}.
% \ge \Pro(X\in [b,a]),
\end{align*}
Therefore \eqref{eq: p0 for J} holds, as required.
%At last we conclude that $\Pro(|J_{t}|> |J_{t-1}|\mid\cF_{t-1} ) \ge \Pro(X\in [b,a])$.

\begin{comment}
\begin{align*}
\Pro(|J_{t}|>|J_{t-1}|\mid\cF_{t-1} )
& \ge \Pro(X\in [b,a], Y\ge a \mid \ \tfrac{X+Y}2\ge \bar m, \bar  m>a)
\\ &\ge \int_a^\infty \Pro\left(X\in[b,a] \mid \ \bar m\ge a, 2\bar m - s\le X \le a \right) d\mu(s)
\\ %&=\int_{a}^\infty \frac{\Pro\left(X\in [b,a]   \land 2\bar m-s\le X\le a   \mid \bar m\ge a \right) }{\Pro\left(2\bar m-s\le X\le a   \mid \bar m\ge a\right)} d\mu(s)
&\ge\inf_{\bar m>a} \int_{a}^\infty   \frac{\Pro\left(X\in [b,a]   \land 2\bar m-s\le X\le a   \right) }{\Pro\left(2\bar m-s\le X\le a  \right)} d\mu(s)
\\ & =\inf_{\bar m>a}\int_a^\infty \frac{\mu([\max(b,2\bar m -s) ,a])}{\mu([2\bar m-s, a])} d\mu(s)
\\ & \ge\int_a^\infty\left( 1\cdot \ind_{2\bar m-s\ge b} + \mu([b,a])\ind_{b>2\bar m-s} \right)d\mu(s)
\\ & \ge \mu([b,a])\mu([a,\infty)).
\end{align*}

\end{comment}
\end{proof}

We conclude this part
with the following simple observation about the continuity of the process $m_k$, following directly from \eqref{eq: mk def} and from the fact that members are admitted one-by-one.
\begin{obs}\label{obs: m_k contin.}
Let $i< j<\ell$ and suppose that $m_j<x_i<m_\ell$. Then there exists $k\in (j,\ell)$ such that $m_{k}=x_i$.
\end{obs}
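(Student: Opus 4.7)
My plan is to prove the observation by a minimality argument combined with a one-step ``no-jump'' lemma, which captures the intuition that adding a single member to $S_k$ can only move the empirical $r$-quantile to an adjacent element in the sorted order, so $m_k$ cannot skip over a value already present in the set.

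The key step will be the following claim: \emph{if $k>i$ and $m_{k-1}<x_i$, then $m_k\le x_i$.} I will prove this by a direct count. Since $x_i\in S_k=\{x_0,\dots,x_{k-1}\}$ and $x_i>m_{k-1}$, we have
\[
\bigl|\{j'<k: x_{j'}\le x_i\}\bigr|\ge \bigl|\{j'<k-1: x_{j'}\le m_{k-1}\}\bigr| + \bigl|\{j'<k: m_{k-1}<x_{j'}\le x_i\}\bigr|\ge r(k-1)+1,
\]
where the first summand is bounded using the defining property of $m_{k-1}$, and the second is at least $1$ because $x_i$ itself lies in that range. Since $r<1$, we have $r(k-1)+1\ge rk$, and the definition of $m_k$ in \eqref{eq: mk def} immediately yields $m_k\le x_i$.

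Given the claim, the observation will follow easily. I will let $k$ be the minimal index in $(j,\ell]$ with $m_k\ge x_i$; such an index exists because $m_\ell>x_i$. Minimality gives $m_{k-1}<x_i$ (either by the choice of $k$, or via the hypothesis $m_j<x_i$ in the boundary case $k-1=j$), and the hypothesis $i<j\le k-1$ ensures $i<k$. Applying the claim gives $m_k\le x_i$, which combined with $m_k\ge x_i$ forces $m_k=x_i$. Finally $m_k=x_i<m_\ell$ rules out $k=\ell$, so $k\in (j,\ell)$ as required.

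I do not expect any real obstacle here; the only subtlety is correctly handling the boundary case $k-1=j$ in the minimality argument (so one does not end up with an ``off by one'' in the index of the first strict inequality), and keeping the bookkeeping between the counts with ``$\le$'' and the strict inequalities in the hypothesis straight. Everything else is a one-line count that uses only the definition of $m_k$.
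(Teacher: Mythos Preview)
Your proof is correct and is precisely a careful formalization of what the paper leaves as a one-line justification (``following directly from \eqref{eq: mk def} and from the fact that members are admitted one-by-one''). The paper gives no further argument, and your no-jump claim together with the minimality argument is exactly the intended reasoning.
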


\subsection{A property of continuous measures}\label{sec: meas}

We shall use the following observation about continuous measures.

\begin{lem}\label{lem: two good intervals}
Let $\mu$ be probability measure with no atoms in $[a,b]$ which satisfies $\mu([a,b])>0$. Then
there exist two intervals $I_1, I_2\subseteq[a,b]$, both of positive measure,
such that $\frac{I_1+I_2}{2} > I_1$.
\end{lem}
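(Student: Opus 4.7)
The plan is to take $I_1$ as a short interval pressed against the left edge of the ``effective support'' of $\mu|_{[a,b]}$, and to take $I_2$ as an interval sitting well to its right, chosen so that the minimum of $\tfrac{I_1+I_2}{2}$ strictly exceeds the maximum of $I_1$. Set $F(x):=\mu([a,x])$ and $G(x):=\mu([x,b])$ for $x\in[a,b]$. Atomlessness of $\mu$ on $[a,b]$ makes both functions continuous, with $F(a)=0$, $F(b)>0$, $G(a)>0$ and $G(b)=0$. Define
\[
a^*:=\inf\{x\in[a,b]:F(x)>0\}.
\]
Continuity of $F$ forces $F(a^*)=0$, while the definition of $a^*$ yields $F(a^*+\delta)>0$ for every $\delta\in(0,b-a^*]$; combined with $\mu(\{a^*\})=0$, this gives $\mu([a^*,a^*+\delta])>0$ for all such $\delta$. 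Thus, no matter how small one takes $\delta$, the interval $I_1:=[a^*,a^*+\delta]$ is a legitimate candidate of positive $\mu$-measure.

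Next, because $G$ is continuous with $G(a^*)=\mu([a,b])>0$, for all sufficiently small $\delta>0$ one has both $a^*+2\delta<b$ and $G(a^*+2\delta)>0$. Fix such a $\delta$, and then pick any $c'\in(a^*+2\delta,b)$ with $G(c')>0$, which exists again by continuity of $G$. Set $I_2:=[c',b]$. Both $I_1$ and $I_2$ lie in $[a,b]$ and have positive $\mu$-measure by construction, and a direct computation gives
\[
\min\tfrac{I_1+I_2}{2}=\tfrac{a^*+c'}{2}>\tfrac{a^*+(a^*+2\delta)}{2}=a^*+\delta=\max I_1,
\]
which is precisely the required separation $\tfrac{I_1+I_2}{2}>I_1$.

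The only genuinely delicate step is the first: making $I_1$ arbitrarily short yet still of positive $\mu$-measure. This is possible only because the no-atoms hypothesis rules out any point mass at $a^*$ and makes $F$ continuous with $F(a^*)=0$, so that every right-neighbourhood of $a^*$ inside $[a,b]$ receives positive mass. Once $I_1$ is in place, the placement of $I_2$ is automatic, since the right-tail mass $\mu((a^*+2\delta,b])$ tends to $\mu((a^*,b])=\mu([a,b])>0$ as $\delta\downarrow 0$.
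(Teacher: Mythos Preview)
Your proof is correct and follows essentially the same approach as the paper: both arguments locate the left edge of the support of $\mu|_{[a,b]}$ (your $a^*$ is exactly the paper's $a'$), place $I_1$ as a short interval there, and place $I_2$ sufficiently far to the right. The paper's version is slightly more symmetric---it also identifies the right edge $b'$ of the support and takes $I_1=(a',a'+\delta)$, $I_2=(b'-\delta,b')$ with the explicit choice $\delta=(b'-a')/4$---whereas you take $I_2=[c',b]$ for any $c'>a^*+2\delta$ with positive tail mass; but this is a cosmetic difference, not a genuinely different route.
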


\begin{proof}
Let $a'=\inf\{d\ge a :\ \forall \epsilon>0 \: \mu((d,d+\epsilon))>0\}$
and $b'=\sup\{d\le b :\ \forall \epsilon>0 \: \mu((d-\epsilon,d))>0\}$.
Since $\mu([a,b])>0$ these infimum and supremum are taken over non-empty sets and are therefore finite.
By the continuity of $\mu$ on $[a,b]$ we have $a'< b'$.
Fix $\delta = \frac{b'-a'}{4}$ and set $I_1=(a',a'+\delta)$ and $I_2=(b'-\delta,b')$. Then $\mu(I_j)>0$ for $j=1,2$ and
$\inf(I_1)+\inf(I_2) = a'+b'-\delta
=2(a'+\delta)+\frac{(b'-a')}4
%\ge 2a'+3\delta
>2(a'+\delta)\ge 2\sup(I_1).$

%$\mu([a,a+\epsilon))>0$ and $\mu([b-\epsilon,b])>0$.
%Let $a''=\min(\{d\ :\ \mu([a,d]=\mu[a,b]/2)\})$, whose existence is guaranteed by the continuity of $\mu$ on $[a,b]$, and let $a'=\min(\{d\ :\ \mu([a,d]=\mu[a,b]/4)\})$
%
%
%Let $\mu$ be a probability measure. Observe that for any $\ep>0$ we have:
%\begin{equation}\label{eq: massive ep interval}
%\mu\Big( \{ x: \quad \mu( [x-\ep,x] )=0\} \Big)=0.
%\end{equation}
%Next we claim that for any $a\le c$ such that $\mu([a,c])>0$, we have
%\begin{equation}\label{eq: massive good interval}
%\mu\Big( \{ b\in [a,c]: \quad \mu( [a\vee(2b-c), b] )=0\} \Big)=0.
%\end{equation}
%%there exists $b\in [a,b']$ such that $\mu\big([a\vee (2b-b') ,b]\big)>0$.
%We note that for $b\in [a+\ep ,c-\ep]$ one has $[b-\ep,b] \subseteq [a\vee(2b-c),b]$.
%Therefore \eqref{eq: massive ep interval} with $\ep=2^{-n}$ implies
%\[
%\mu\Big( \{ b\in [a+2^{-n},c-2^{-n}]: \quad \mu([a\vee(2b-c),b])=0\} \Big)=0.
%\]
%Taking a union bound over all $n\in\N$ we obtain \eqref{eq: massive good interval}.
%%(Notice that we may include the end-points $b\in\{a,c\}$, since
%%$[a\vee(2b-c),b]=\{b\}$.)

%
%At last we may conclude the proof of Lemma \ref{lem: two good intervals} as follows.
%Assume $\mu$ has no atoms in $[a,b]$. Then there exists $c\in (a,b)$ such that $\mu([a,c])>0$ and $\mu([c,b])>0$.
%By \eqref{eq: massive good interval}, we can find a point $b'\in [a,c]$ such that $I_1:=[a\vee(2b'-c),b']$ obeys $\mu(I_1)>0$.
%Defining $I_2 = [c,b]$ we see that for any $x_1\in I_1$, $x_2\in I_2$ we have
%$ \frac{x_1+x_2}{2} \ge \frac {(2b'-c) + c}{2} = b'\ge I_1$, as anticipated.
\end{proof}

%%%%%%%%%%%%%%%%
% Proofs of key lemmas            %
%%%%%%%%%%%%%%%%
\section{Proofs of key lemmata}\label{sec: lems}
\subsection{Proof of Lemma \ref{lem: right separators}: negative drift}\label{sec: drift}
 \textbf{Proof outline.}
The proofs of the three parts of the lemma are rather similar. All three rely on the observation that to the left of every point at which 
$\rho(m_k)$, the drift of $y_k$,
%is bounded away from zero from below, 
is negative,
one can identify an interval $I$ of positive measure where the drift is bounded away from zero from below uniformly.
Our purpose is to use this property to show that if $m_k$ is below $I$ for sufficiently many steps, then it will never leave it from above.
This is shown in Lemma~\ref{lem: aux right} below. To show Lemma~\ref{lem: aux right}, we divide the process $m_k$ into excursions into $I$ entering it from below. We show that the probability of $m_k$ ever exiting $I$ from above in a given excursion is less than $1$, and that it decays exponentially in the number of club members whose opinion lies in $I$ at the start of the excursion (using Proposition~\ref{prop: ell people}). We further show that $m_k$ will exit $I$ from below after sufficiently many visits to $I$ (using Lemma~\ref{lem: ruin} on drifting random walks, and our choice of $I$), and that each time that a new excursion starts, there is a positive probability, bounded away from zero, that a new member with opinion in $I$ will be admitted to the club. We then argue that the probability of ever exiting $I$ from above decays to $0$ exponentially fast as the number of members in $I$ accumulates, so that, by the Borel-Cantelli Lemma, the probability that infinitely many such crossings will occur must be zero.

\bigskip

{Our key auxilary lemma is the following}.
% The proof of Lemma \ref{lem: right separators} relies on the following auxiliary lemma, whose proof we postpone to the end of this section.
\begin{lem}\label{lem: aux right}
let $\ep>0$, $\al \le \beta \le \g$ and $I\in\{(\beta,\gamma],(\beta,\gamma)\}$, such that $\mu( [\al,\beta]) >0$,
$\mu((\al,\beta]\cup I)\le \frac{\ep}{2}$ and $\rho(x)\le -\ep$ for all $x\in [\alpha,\beta]\cup I$.
%Then $[\beta,\g]$ is a right-separator (and in particular, $\g$ is a right-barrier).
Then $\g$ is a right-barrier, and if $I\neq \emptyset$ then $I$ is a right-separator.
\end{lem}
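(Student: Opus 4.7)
My plan is to apply Proposition~\ref{prop: ell people} to the intervals $[\alpha,\beta]$ and $[\alpha,\gamma]$, combined with a decomposition into excursions and a Borel--Cantelli argument, exactly as in the outline of Section~\ref{sec: drift}.

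First, I would verify the hypotheses of Proposition~\ref{prop: ell people}. By the lower semi-continuity of $\rho$ (Observation~\ref{obs: rho contin}), the assumption $\rho\le-\epsilon$ on $[\alpha,\beta]\cup I$ extends to $\rho(\gamma)\le\liminf_{x\to\gamma^-}\rho(x)\le-\epsilon$, hence $\rho<-\epsilon/2$ throughout $[\alpha,\gamma]$ and in particular on $[\alpha,\beta]$. Together with $\mu([\alpha,b])\ge\mu([\alpha,\beta])>0$ for $b\in\{\beta,\gamma\}$, Proposition~\ref{prop: ell people} provides constants $q_\beta,q_\gamma\in(0,1)$ so that for either $b\in\{\beta,\gamma\}$,
\[
\Pro\bigl(\exists k>t_0:\, m_k\ge b \,\big|\, \cF_{t_0-1},\, m_{t_0}\le\alpha,\, \nu_{t_0-1}([\alpha,b])\ge\ell\bigr)<q_b^\ell.
\]

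For the \emph{barrier} claim at $\gamma$, I would decompose time into excursions above $\alpha$: let $T_1<T_2<\cdots$ be the successive times with $m_{T_i-1}\le\alpha$ and $m_{T_i}>\alpha$, and set $N_i:=\nu_{T_i-1}([\alpha,\gamma])$. The display bounds the $i$-th excursion's probability of reaching $\gamma$ by $q_\gamma^{N_i}$. Between excursions, when $m_k\le\alpha$, the next admitted member lies in $[\alpha,\beta]\subseteq[\alpha,\gamma]$ with probability at least $p:=\mu([\alpha,\beta])^2>0$, since the distribution of $x_k$ is that of $\min(X,Y)$ conditioned on $\tfrac{X+Y}{2}\ge m_k$ and $\{X,Y\in[\alpha,\beta]\}$ is contained in the conditioning event. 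A conditional Borel--Cantelli argument then yields $N_i\to\infty$ almost surely on the event of infinitely many excursions, and standard Borel--Cantelli forces only finitely many of them to reach $\gamma$. Combined with the complementary case of finitely many excursions (where $m_k\le\alpha\le\gamma$ eventually), this shows $\Pro(m_k\le\gamma\text{ eventually})=1$, yielding both the almost-sure dichotomy and the positivity demanded by the right-barrier property. The \emph{separator} claim for $I\ne\emptyset$ then follows by the same argument with $\gamma$ replaced by $\beta$: Proposition~\ref{prop: ell people} applied on $[\alpha,\beta]$ gives $m_k<\beta$ eventually almost surely, whence $m_k\notin I$ eventually since $I\subseteq(\beta,\gamma]$.

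The main obstacle I anticipate is ruling out the ``infinite excursion'' subcase, in which $m_k>\alpha$ for all large $k$ without ever returning to $\le\alpha$ --- this may occur, for instance, when $\mu([0,\alpha))=0$ and $\alpha$ is not an atom of $\mu$. On this subcase the between-excursion reasoning fails to deliver $N_i\to\infty$, and Proposition~\ref{prop: ell people} cannot be invoked at later times. To handle it I would exploit the quantile walk $y_k$ from Section~\ref{subs:quantile random walks}: its drift $\rho(m_k)\le-\epsilon$ on $[\alpha,\gamma]$, combined with the identity $m_k\le\alpha\iff y_k\le\psi_k^+(\alpha)$ and the small-measure hypothesis $\mu((\alpha,\gamma])\le\epsilon/2$ (which controls the rate of decay of $\psi_k^+(\alpha)$), should force $m_k\le\alpha$ infinitely often via Lemma~\ref{lem: ruin}, enabling Proposition~\ref{prop: ell people} at later times and contradicting an indefinitely persistent excursion. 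Carrying out this drift-comparison carefully, while tracking the coupling between $m_k$, $y_k$, and $\psi_k^+$, is the main technical burden of the proof.
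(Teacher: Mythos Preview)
Your overall strategy—excursions above $\alpha$, Proposition~\ref{prop: ell people} for the crossing probability, Borel--Cantelli, and the drift of $Z_k=y_k-\psi_k^+(\alpha)$ via Lemma~\ref{lem: ruin}—is essentially the paper's. But your target conclusion is too strong, and this is a genuine gap, not a technicality.

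You aim to prove $\Pro(m_k\le\gamma\text{ a.e.})=1$ (and, for the separator, $\Pro(m_k\le\beta\text{ a.e.})=1$). This is false in general: nothing in the hypotheses prevents $\mu$ from having mass well above $\gamma$ with $\rho>0$ there, in which case $\Pro(m_k>\gamma\text{ a.e.})>0$. The lemma only asserts that $\gamma$ is a \emph{barrier} (the dichotomy holds) with the lower alternative having positive probability, and the paper accordingly proves $\Pro(E\cup T)=1$ with $E=\{m_k>I\text{ a.e.}\}$ and $T=\{m_k<I\text{ a.e.}\}$, never attempting to rule out $E$.

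This error surfaces precisely in your ``infinite excursion'' paragraph. Your drift bound $\rho(m_k)\le-\epsilon$ holds only while $m_k\in[\alpha,\gamma]$; once $m_k>\gamma$ the drift of $y_k$ is uncontrolled, so Lemma~\ref{lem: ruin} cannot force a return to $\le\alpha$. Two sub-cases escape your argument: (a) the excursion exits above $\gamma$ and stays there forever—this is exactly the event $E$, which must be \emph{allowed}, not excluded; (b) the excursion oscillates between $(\alpha,\gamma]$ and $(\gamma,\infty)$ without ever dropping to $\le\alpha$. The paper handles (b) by introducing a second layer of stopping times $s_j^i$ (the successive re-entries into $(\alpha,\beta]\cup I$ within a single excursion) and using Lemma~\ref{lem: ruin} on $Z_k$ at each such re-entry to show that the probability of returning to $\le\alpha$ before next exceeding $\gamma$ is bounded below, so Borel--Cantelli kills case (b). Your sketch gestures at the right process $Z_k$ but applies it globally rather than per-re-entry, which is why the oscillation case slips through.

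Once you replace your target with $\Pro(E\cup T)=1$, your separator argument also simplifies: there is no need for a second application of Proposition~\ref{prop: ell people} on $[\alpha,\beta]$, since $E\cup T$ is already the separator dichotomy for $I$, and $\Pro(T)>0$ follows from a single application of Proposition~\ref{prop: ell people} at any $t_i$ with $\nu_{t_i-1}([\alpha,\beta])\ge1$.
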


{We present the proof of Lemma~\ref{lem: right separators} and then turn to prove Lemma \ref{lem: aux right}.}

\begin{proof}[Proof of Lemma~\ref{lem: right separators}]
{\bf Part (\ref{lem: right separators: item plus}).}
Let $x_0$ be such that $\rho^+(x_0)<0$ and write $\ep=-\frac{\rho^+(x_0)}2$.
%Choose $\eta>0$ such that $\rho^+(x_0) < -2\eta$.
Let $$a:=\inf\left\{x\ :\ \mu(x,x_0)\le \frac{\ep}2, \rho^+(x)\le -\ep \right\}.$$
Using Observations~\ref{obs: rho monotone for mu null} and \ref{obs: lim at -infty}, we deduce that $a>-\infty$ and
$\mu([a,x_0))>0$. Using the fact that $\rho\le \rho^+$ (Observation \ref{obs: rho contin}), we deduce that $\rho(x)\le -\ep$ for all $x\in[a,x_0)$.
As $\mu([a,x_0))>0$, there exists $b\in [a,x_0)$ such that $\mu([a,b]) >0$.
Hence $a\le b< x_0$ and $\ep$ satisfy the conditions of Lemma~\ref{lem: aux right}. We conclude that $(b,x_0]$ is a right-separator, as required.

{\bf Part (\ref{lem: right separators: item barrier}).}
Let $x_0$ be such that $\rho(x_0)<0$.
If $\rho^+(x_0)<0$ then this part follows from part~\eqref{lem: right separators: item plus}. Otherwise, we have $\rho^+(x_0)\ge 0$ so that, by Observation \ref{obs: rho contin}, we have $\mu(\{x_0\})>0$. Hence the conditions of Lemma~\ref{lem: aux right} are satisfied with $\al=\beta=\g=x_0$ and $\ep=-\rho(x_0)$. We conclude that $x_0$ is a right-barrier, as required.

{\bf Part (\ref{lem: right separators: item right limit}).}
Let $x_0$ be such that $\lim_{x\to x_0+} \rho(x)<0$ and write
$\ep=-\lim_{x\to x_0+} \rho(x)/2$. Write
\begin{align*}
b&=\sup\left\{x :\ \mu(x_0,x)\le \frac{\ep}4,\, \rho(x)\le -\ep \right\},\\
a&=\inf\left\{x :\ \mu(x,b)\le \frac{\ep}2, \,\rho(x)\le -\ep \right\}.
\end{align*}
Observe that $a\le x_0<b$. By lower-semicontinuity of $\rho$ (Observation~\ref{obs: rho contin}), we have
$\rho(x)\le -\ep$ for all $x\in [a,b]$. By Observation~\ref{obs: rho monotone for mu null}, we see that $\mu([a,x_0])>0$. Hence the conditions of
Lemma~\ref{lem: aux right} are satisfied with $\al=a$, $\beta=x_0$, $\g=b$ and $\ep$. We deduce that $(x_0,b]$ is a right-separator, so that, by definition,  $(x_0,b)$ is also a right-separator -- as required.
\end{proof}

\begin{proof}[Proof of Lemma~\ref{lem: aux right}]
Denote the events $E=\{m_k> I \text{  a.e.}\}$ and $T=\{m_k< I \text{  a.e.}\}$ (here $E$ is associated with \emph{escaping} while $T$ is associated with being \emph{trapped}). Our purpose is to show that $\Pro(E\cup T)=1$.

%We begin by showing that $\Pro(A_{\alpha}\cup B_{\gamma})=1$.
We define a sequence of discrete stopping times $\{t_i\}_{i\in\N_0}$, setting
$t_0=\inf\{t\in\N\ :\ x_t\in[\alpha,\beta] \}$ and defining for $i>0$,
$$t_i=\begin{cases}
\inf\{t>t_{i-1}\ :\ m_t\le \alpha,\ m_{t-1}> \alpha\} & t_{i-1}<\infty\\
\infty & t_{i-1}=\infty
\end{cases},$$
these are subsequent entry times of $m_k$ into the set $(-\infty,\alpha]$.
From Proposition~\ref{prop: Ik=1} we obtain that, almost surely, $\{t_0<\infty\}\cup E$ holds.

Next, we use induction to show that
\begin{equation}
\Pro(\{\forall{i\in\N_0}:t_i<\infty\}\cup E\cup T)=1.
\label{eq: A B or finitely many t i less than infty}
\end{equation}
Indeed, assume that $t_{i}<\infty$ and let us show that either $t_{i+1}<\infty$, $E$ or $T$ holds.
To this end, define a sequence of stopping times $\{s_j^i\}_{\{j\in\N_0\}}$ by setting $s_0^i=\inf\{s>t_{i}\ :\ m_s\in
(\alpha,\beta]\cup I\}$ and
$$s_j^i=\begin{cases}\inf\{s^i_{j-1}<s<t_{i+1} :\ m_s\in (\alpha,\beta]\cup I, \ m_{s-1}\notin (\alpha,\beta]\cup I\} & s^i_{j-1}<\infty\\
\infty & s^i_{j-1}=\infty.
\end{cases}$$
These are subsequent entry times of $m_k$ to the set $(\alpha,\beta]\cup I$ between times $t_i$ and $t_{i+1}$. To show~\eqref{eq: A B or finitely many t i less than infty}, it would suffice to show that $\Pro(\{\forall j\in\N_0:s^i_j<\infty\})=0$.
Define $Z_k := y_{k} - \psi^+_k(\al)$ and observe that, by \eqref{eq: m and psi}, $t_{i+1}=\inf\{k>s_0^i\ :\ Z_k\le0\}$.
Writing $A_k=\{m_{k}\in(\alpha,\beta]\cup I\}$, we compute
\begin{align*}
\E (Z_{k+1}-Z_k &\mid \cF_{k-1}, A_k) = \E(y_{k+1} -y_{k} \mid \cF_{k-1}, A_k ) - \E (\psi_{k+1}^+(\al) -\psi_{k}^+(\al)\mid \cF_{k-1},A_k )
\\ & \le -\ep
+\Pro (x_{k}\le m_{k} \mid \cF_{k-1}, A_k)- \Pro (x_{k}\le \al \mid \cF_{k-1}, A_k)\\
&\le -\ep +\Pro(x_{k} \in (\al,\beta]\cup I \mid \cF_{k-1}, A_k) \le-\ep+ \mu( (\al,\beta]\cup I ) \le-\frac{\ep}{2},
\end{align*}
where the transition between the first line and the second uses \eqref{eq: rho def}, \eqref{eq: diff psi} and our assumption on $\rho$
and the last inequality uses our assumption that $\mu((\alpha, \beta]\cup I)\le \frac{\ep}2$. Write
$$\tau^i_j=\inf\{k>s_j^i\ :\ m_k>\gamma\} =\inf\{k>s_j^i\ :\ Z_k >\psi_k^+(\g)-\psi_k^+(\al)\} $$ and
observe that
$\{t_{i+1}<\tau^i_j\}\subseteq \{s_{j+1}^i=\infty$\}.
Notice that the process $\{Z_k: k\in [s_j^i, \min(\tau_j^i, t_{i+1})\}$ has bounded steps and drift at most $-\ep$,
so by applying Lemma~\ref{lem: ruin},
there exists $q>0$ such that
%$$\Pro(t_{i+1}>\tau^i_j\ |\ \cF_{s_j^i}, s^i_j<\infty)<1-q.$$
\begin{align*}
\Pro(s^i_{j+1}=\infty\ |\ \cF_{s_j^i-1}, s^i_j<\infty)
&\ge \Pro(t_{i+1}<\tau^i_j\ |\ \cF_{s_j^i-1}, s^i_j<\infty)
\\ & \ge \Pro(T_{\{Z_k \ge 1\}}\ |\ Z_0\le 0)
>q.
\end{align*}
Thus, for all $j\in \N_0$ such that $\Pro(s^i_{j}<\infty)>0$ we have $\Pro(s^i_{j+1} <\infty \mid s^i_{j} <\infty)<1-q$. Using the conditional second Borel-Cantelli lemma \cite{Brus} we obtain $\Pro(\{\forall j\in\N_0:s^i_j<\infty\})=0$, and establish~\eqref{eq: A B or finitely many t i less than infty}.

Next, applying Proposition~\ref{prop: ell people}
with $[a,b]:=[\al,\beta]$, $t_0:=t_i$, and $\ep$, we obtain the existence of $q<1$ such that for all $i\in\N_0$,
\begin{equation}\label{eq: B beta large}
\Pro(T^c\ |\ \cF_{t_i-1}, t_i<\infty)< \Pro\big(\exists k>t_i:\  m_k\ge \beta \ \big| \ \cF_{t_i-1}, m_{t_i}\le \al, 
\nu_{t_i-1}([\al,\beta] )\ge 1\big) < q.
\end{equation}
Denote $L_j=|\{i<j\ :\ \exists t\in (t_i,t_{i+1}), m_t>\beta\}|$ for $j\in\N\cup\{\infty\}$. By \eqref{eq: B beta large} for all $i\in\N$ we obtain, $\Pro(L_\infty>L_i\ |\ \cF_{t_i-1}, t_i<\infty)< q$.
Hence $\Pro(\{L_\infty>k\}\cap \{\forall{i\in\N_0}:t_i<\infty\})<q^k$ for any $k\in\N$, so that
$\Pro(T^c\cap \{\forall{i\in\N_0}:t_i<\infty\})=0$.
Combined with \eqref{eq: A B or finitely many t i less than infty} it follows that $\Pro(E\cup T)=1$, so that by definition, if $I\neq \emptyset$ then
$(\beta,\g]$ is a separator, while if $I=\emptyset$, then $\g$ is a barrier.
Moreover, from \eqref{eq: B beta large} we obtain $\Pro(T)>0$, so that if $I\neq \emptyset$, then $(\beta,\g]$ is a right-separator, while if $I=\emptyset$, then $\g$ is a right-barrier.
\end{proof}

\subsection{Proof of Lemma \ref{lem: left separators}: positive drift}

The proof is analogous to that of Lemma~\ref{lem: right separators}. Firstly, we introduce a counterpart of Lemma~\ref{lem: aux right}.
\begin{lem}\label{lem: aux left}
let $\ep>0$ and $\g \le \beta \le \al$ be such that $\mu((\g,\al))\le \frac{\ep}{2}$, $\mu( [\beta,\al]) >0$ and
$\rho^+(x)\ge \ep$ for all $x \in I\cup[\beta,\alpha]$ for $I\in\{[\gamma,\beta),(\gamma,\beta)\}$.
%Then $[\beta,\g]$ is a right-separator (and in particular, $\g$ is a right-barrier).
Then $\Pro(m_k> I\text{ a.e.})>0$ and if $I\neq \emptyset$ then $I$ is almost surely a separator, while if $I=\emptyset$ then, almost surely, either $m_k \ge \gamma$ almost everywhere or $m_k<\gamma$ almost everywhere.
\end{lem}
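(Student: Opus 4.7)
The plan is to mirror the proof of Lemma~\ref{lem: aux right} with the direction of drift reversed. I set $E := \{m_k > I \text{ a.e.}\}$ and $T := \{m_k < I \text{ a.e.}\}$, and aim to show $\Pro(E\cup T) = 1$ together with $\Pro(E) > 0$. When $I \ne \emptyset$ this yields that $I$ is a separator and gives the required positive escape probability. The case $I = \emptyset$ (where $\g = \beta$) is handled by the same argument with $E,T$ reinterpreted as $\{m_k \ge \g \text{ a.e.}\}$ and $\{m_k < \g \text{ a.e.}\}$, which produces the claimed barrier conclusion. The essential substitutions relative to the original are replacing $y_k$ by $y_k^+$, replacing $\rho$ by $\rho^+$, and replacing Proposition~\ref{prop: ell people} by Proposition~\ref{prop: ell people rev}.

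First I would introduce mirrored stopping times: $t_0 := \inf\{t : x_t \in [\beta, \al]\}$, and for $i \ge 1$ let $t_i := \inf\{t > t_{i-1} : m_{t-1} < \al,\ m_t \ge \al\}$, i.e., subsequent entries of $m_k$ into $[\al,\infty)$ from below. Between $t_i$ and $t_{i+1}$ I would define $s^i_j$ as the subsequent entry times of $m_k$ into $I \cup [\beta,\al]$. Applying Proposition~\ref{prop: Ik=1} to $[\beta,\al]$, which has positive $\mu$-mass by hypothesis, yields almost surely that either $t_0 < \infty$ or $m_k > \al$ eventually; since $I\subseteq[\g,\beta)\subseteq(-\infty,\al)$, the latter event is contained in $E$.

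Next I would track the random walk $Z_k := y_k^+ - \psi_k(\al)$. By \eqref{eq: m and psi}, $Z_k > 0 \iff m_k \ge \al$, so $t_{i+1}$ is the first time after $s_0^i$ at which $Z_k$ becomes positive. On the event $A_k := \{m_k \in I \cup [\beta,\al]\}$, using \eqref{eq: rho def}, \eqref{eq: diff psi} and the hypothesis $\rho^+ \ge \ep$, the drift should satisfy
\[
\E(Z_{k+1}-Z_k \mid \cF_{k-1}, A_k) \;\ge\; \ep - \Pro(x_k \in [m_k,\al) \mid \cF_{k-1}, A_k) \;\ge\; \ep - \mu((\g,\al)) \;\ge\; \tfrac{\ep}{2},
\]
where I would use $m_k \ge \g$ on $A_k$ together with $\mu((\g,\al)) \le \ep/2$. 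Applying Lemma~\ref{lem: ruin} to $-Z_k$ on each excursion---which has bounded steps and drift at most $-\ep/2$---and then invoking the conditional second Borel-Cantelli lemma rules out $\{\forall j : s^i_j < \infty\}$. Inductively this yields $\Pro(\{\forall i : t_i < \infty\} \cup E \cup T) = 1$.

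Finally I would invoke Proposition~\ref{prop: ell people rev} with $[b,a]:=[\beta,\al]$ and $t_0:=t_i$: conditioned on $m_{t_i} \ge \al$ and $\nu_{t_i-1}([\beta,\al]) \ge L_i$, this bounds by $q^{L_i}$ the probability that $m_k$ ever drops below $\beta$ during the $i$-th excursion. A uniform lower bound on the probability of admitting a new member in $[\beta,\al]$ at each excursion, analogous to \eqref{eq: low}, shows that $L_i$ grows linearly on the event $\{\forall i : t_i < \infty\}$; summing these exponentially small bounds and applying Borel-Cantelli then gives $\Pro(T^c \cap \{\forall i : t_i < \infty\}) = 0$, so that $\Pro(E\cup T)=1$, while Proposition~\ref{prop: ell people rev} simultaneously delivers $\Pro(E)>0$. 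The main obstacle will be the drift computation for $Z_k$: atoms of $\mu$ at $\g$ or $\al$ could threaten the $\ep/2$ lower bound, and accommodating them is exactly why one must work with $y_k^+$ and the left-continuous $\rho^+$ rather than with $y_k$ and the merely lower-semicontinuous $\rho$.
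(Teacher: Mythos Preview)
Your proposal is correct and follows the paper's own approach. The paper's proof is literally ``\emph{mutatis mutandis}'' from Lemma~\ref{lem: aux right}, and the alterations it lists are exactly the ones you identify: swap the roles of the two events (the paper writes $T=\{m_k>I\text{ a.e.}\}$ and $E=\{m_k<I\text{ a.e.}\}$, the reverse of your labeling, which is purely cosmetic), set $Z_k=\psi_k(\al)-y_k^+$ (the negative of your $Z_k$, again cosmetic), and replace Proposition~\ref{prop: ell people} by Proposition~\ref{prop: ell people rev}. Your closing remark about why $y_k^+$ and $\rho^+$ are the right objects here is also on point. The only slight deviation is in the final step: the paper applies Proposition~\ref{prop: ell people rev} with the fixed threshold $\ell=1$ at every excursion to get a uniform $q<1$ and then bounds the number of crossings geometrically, whereas you sketch an argument with a growing count $L_i$ and summable tail bounds; both routes work and yield the same conclusion.
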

\emph{Mutatis mutandis}, the proof of Lemma \ref{lem: aux left} is the same as that of Lemma~\ref{lem: aux right}. A few noteworthy alterations are the following. Define $T=\{m_k> I \text{  a.e.}\}$ and $E=\{m_k< I \text{ a.e.}\}$ and $Z_k := \psi_k(\al)-y^+_{k}$. In addition, Proposition~\ref{prop: ell people rev} plays in the proof the role previously given to Proposition~\ref{prop: ell people}.

\begin{proof}[Proof of Lemma~\ref{lem: left separators}]

{\bf Part (\ref{lem: left separators: item plus}).}
Let $x_0$ be such that $\rho(x_0)>0$ and denote $\ep:=\frac{\rho(x_0)}2.$
%Choose $\eta>0$ such that $\rho^+(x_0) < -2\eta$.
Write
\begin{equation}\label{eq: set a}
a=\sup\left\{x\ :\ \mu(x_0,x)\le \frac{\ep}2, \rho(x)\ge \ep \right\}.
\end{equation}
Using Observation~\ref{obs: rho monotone for mu null} and the fact that $x_0<\mu_{\max}$ we deduce that
$\mu((x_0,a])>0$. Using the fact that $\rho\le\rho^+$ (Observation \ref{obs: rho contin}) and
that $\rho^+(x) = \lim_{t\to x-}\rho(t)$, we deduce that $\rho^+(x)\ge \ep$ for all $x\in[c,a]$
for some $c<x_0$.
As $\mu((x_0,a])>0$, we can choose $b\in (x_0,a]$ such that $\mu([b,a]) >0$.
Hence $c<b\le a$ and $\ep$ satisfy the conditions of Lemma~\ref{lem: aux left}. We conclude that $({c},b)$ is a left-separator
{containing $x_0$}, as required.

{\bf Part (\ref{lem: left separators: item barrier}).}
Let $x_0$ be such that $\rho^+(x_0)>0$.
If $\rho(x_0)>0$ then this part follows from part~\eqref{lem: right separators: item plus}. Otherwise, we have $\rho(x_0)\le 0$. If $\rho(x_0)<0$, then by Lemma~\ref{lem: right separators}\ref{lem: right separators: item barrier}, $x_0$ is a right-barrier and we are done. In the remaining
$\rho(x_0)=0$ case, by Observation \ref{obs: rho contin}, we have $\mu(\{x_0\})>0$. Hence the conditions of Lemma~\ref{lem: aux left} are satisfied with $\al=\beta=\g=x_0$ and $\ep=\rho^+(x_0)$.
We conclude that either $m_k\ge x_0$ almost everywhere, or $m_k< x_0$ almost everywhere. By Observation~\ref{obs: rho jumps at atom}, we have
$\lim_{x\to x_0+}\rho(x)>0$.
Hence there exists $b>x_0$ such that $\rho^+(x)\ge \ep>0$ for all $x\in [x_0,b]$ and some $\ep>0$. By Lemma \ref{lem: aux left}, $(x_0,b]$ is a separator and hence $x_0$ is a barrier.

{\bf Part (\ref{lem: left separators: item right limit}).}
Let $x_0$ be such that $\lim_{x\to x_0+} \rho(x)>0$ and denote
$\ep:=\lim_{x\to x_0+} \rho(x)/2$.
We repeat the arguments of Part \ref{lem: left separators: item plus}, setting $a$ as in \eqref{eq: set a} and $b\in (x_0,a]$ so that $\mu([b,a])>0$. This way we obtain that $\rho^+(x)\ge \ep$ for all $x\in(x_0,a]$.
Hence $x_0<b\le a$ and $\ep$ satisfy the conditions of Lemma~\ref{lem: aux left}  and we deduce that, almost surely, $(x_0,b)$ is a separator, as required.

\end{proof}
%%%%%%%%%%%%%%%
%  interval with zero drift       %
%%%%%%%%%%%%%%%%

\subsection{Proof of Lemma \ref{lem: zero drift barrier}: zero drift }\label{sec: no drift}
\textbf{Proof outline.}
Our purpose is to show that if a process visits a positive measure interval $I$ with $\rho(I)=0$, sufficiently many times, then it will eventually find itself entrapped within. To do so we identify inside $I$ a sub-interval $I_1$, which we in turn sub-divide into a countable sequence of nested shells with outer shells thinner than inner ones. We then show that after every visit of $m_k$ to the innermost shells, there is a probability bounded away from zero that it will never leave the interval. To do so we construct an event under which this happens, and bound its probability from below. On this event, by the time at which the walk breaks out from the $i$-th shell, many elements are added to the $i+1$-th shell, so that it takes significantly more time for the walk to cross it. By showing that this sequence of times converges to infinity we conclude that the walk never breaks out of $I_1$.

\bigskip

Let $I=[a,b]$ be such that $\mu(I)>0$ and $\rho^+(x) = \rho(x) = 0$ for all $x\in I$.
By Observation \ref{obs: rho contin}
this implies that $\mu$ has no atoms in $[a,b]$.
%We shall show that $[a,b]$ contains a separator, and therefore a barrier (any point in the separator is, by definition, a barrier).
Hence, by Lemma \ref{lem: two good intervals}, there exist intervals $I_1,I_2\subseteq I$, such that
$\mu(I_j)>0$ ($j=1,2$) and $\frac{I_1+I_2}{2}> I_1$.
We further divide the interval $I_1 = [\al,\beta]$ as follows. Let $c\in I_1$ be the minimal point satisfying
$\mu([\al,c])=\mu([c,\beta])$.
%For $j\in \N$ let $\ell_j<c<r_j $ be such that $\mu([\ell_j,c] )= \mu([c,r_j]) = ( 1-2^{-j} )\frac{\mu(I_1)}{2}$.
%Observe that $c<r_1<r_2<\dots< \beta$ and $c>\ell_1>\ell_2>\dots>\al$. To simplify notation, write $R_j = [r_j, r_{j+1}]$ and $L_j =[\ell_{j+1},\ell_j]$,
%with the convention that $r_0=\ell_0 = c$.
Define two sequences $c=r_0<r_1<r_2<\dots< \beta$ and $c=\ell_0>\ell_1>\ell_2>\dots>\al$ such that
$\mu([r_j, r_{j+1}]) = \mu([\ell_{j+1},\ell_j]) =  2^{-(j+2)} \mu(I_1)$.
Fix a parameter $n_0\in \N$.
For any $t>0$ and $j\in\N$ let
\[
% N_j(t) =   \min\Big(\big| \{ k\le t: x_k \in [\ell_{j+2},\ell_{j+1}] \}\big|, \big| \{ k\le t: x_k \in [r_{j+1},r_{j+2}] \}\big|\Big).
 N_j(t) =   \min\Big(\nu_{t-1} ( [\ell_{j+2},\ell_{j+1}]), \nu_{t-1}([r_{j+1},r_{j+2}])\Big).
 \]
Fix $t_0, n_0\in \N$ and denote the event $B_{0}=\{m_{t_0}\in [\ell_1,r_1], \, N_0(t_0)\ge n_0\}$.
%We now address the main part of the proof, in which we show that there exists $N_0$ such that for all $t_0$,
Our proof relies on the following lemma, whose proof appears later in this section.
 \begin{lem}\label{lem: stuck forever}
 There exists $n_0$ such that if $t_0>\frac{n_0}{r(1-r)}+5$, then $\Pro(B_0)>0$ and
 \[
 \Pro(\forall k>t_0: \, m_k \in I_1 \mid \cF_{t_0-1}, B_{0})>\frac 1 3.
 \]
 %m_{t_0}\in [\ell_1,r_1], N_1(t_0)>N_0)>0.
 \end{lem}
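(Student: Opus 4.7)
The lemma splits into two claims: (i) $\Pro(B_0)>0$, and (ii) conditional on $B_0$, $m_k\in I_1$ for all $k>t_0$ with probability exceeding $1/3$.

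Claim (i) is handled by an explicit positive-probability scenario. Since $\mu$ has no atoms in $I$ and each of $[\ell_2,\ell_1]$, $[\ell_1,r_1]$, $[r_1,r_2]$ has positive $\mu$-mass, with positive probability both candidates in each of the first $\lceil rt_0\rceil-1$ admission rounds lie in $[\ell_2,\ell_1]$, both candidates in round $\lceil rt_0\rceil$ lie in $[\ell_1,r_1]$, and both candidates in each of the next $\lfloor(1-r)t_0\rfloor$ rounds lie in $[r_1,r_2]$. Admission succeeds at every step since each pair's mean exceeds the running quantile by construction. The hypothesis $t_0>n_0/(r(1-r))+5$ guarantees that both $\nu_{t_0-1}([\ell_2,\ell_1])$ and $\nu_{t_0-1}([r_1,r_2])$ exceed $n_0$, while the sample placed in $[\ell_1,r_1]$ forces $m_{t_0}\in[\ell_1,r_1]$, giving $B_0$.

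For (ii), the key observation is that $\rho\equiv 0$ on $I_1\subseteq I$, so by \eqref{eq: rho def} the walk $y_k$ is a martingale with increments in $\{r-1,r\}$ whenever $m_k\in I_1$. By the left/right symmetry of the shell construction, it suffices to bound the right-escape probability $\Pro(\exists k>t_0\colon m_k>\beta\mid B_0)$ by $1/3$. Introduce the hitting times $\tau_j:=\inf\{k>t_0\colon m_k\ge r_{j+1}\}$ and, on $\{\tau_j<\infty\}$, the shell counts $N_j:=\nu_{\tau_j-1}([r_{j+1},r_{j+2}])$. Since $r_j\nearrow\beta$ and $\mu(\{\beta\})=0$, a right-escape occurs iff $\tau_j<\infty$ for every $j$.

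The crux is a self-improving recursion for $N_j$. I would work on a good event $G$ on which $N_j\ge c\,c_j\,N_{j-1}^2$ for every $j\ge 1$, with $c_j>0$ a power of $2^{-j}$ coming from the shell construction; starting from $N_0\ge n_0$, this yields doubly-exponential growth of $N_j$ once $n_0$ exceeds an absolute constant. Its justification uses two ingredients. First, between $\tau_{j-1}$ and $\tau_j$ the driftless martingale $y_k$ must traverse a $y$-distance of at least $N_{j-1}$ (the $y$-width of shell $j$), which by Azuma's inequality (Lemma~\ref{lem: A}) takes at least $\tfrac12 N_{j-1}^2$ steps outside an event of probability $\le e^{-N_{j-1}/8}$. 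Second, during these steps each admitted candidate lands in $[r_{j+1},r_{j+2}]$ with conditional probability bounded below by a product of the corresponding shell masses, and Hoeffding's inequality (Lemma~\ref{lem: H}) concentrates the count to yield the bound on $N_j$. A union bound over these rare failures gives $\Pro(G^c\mid B_0)<1/6$ once $n_0$ is large. Finally, on $G$ a gambler's-ruin estimate (via optional stopping of the bounded martingale $y_k$) bounds the conditional probability that the walk advances from $r_j$ to $r_{j+1}$ by $O(1/N_{j-1})$, so the fast growth of $N_j$ makes $\Pro(\tau_j<\infty\text{ for all }j\mid G)<1/6$ for $n_0$ large, yielding (ii). The main obstacle is the interplay between $y_k$ and the evolving barriers $\psi^+_k(r_{j+1})$: the shell design, in which the $\mu$-mass halves per layer, is calibrated precisely so that the barrier growth is slow relative to the shell size, allowing the recursion to close uniformly in $j$ for a single absolute choice of $n_0$.
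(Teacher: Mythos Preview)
Your proposal has the right large-scale ingredients (Azuma for crossing time, Hoeffding for shell counts, a self-improving recursion), but there are two genuine gaps.

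\textbf{Part (i).} Your scenario ``both candidates in each of the first $\lceil rt_0\rceil-1$ rounds lie in $[\ell_2,\ell_1]$'' does \emph{not} guarantee admission at every step. After the first admission the running quantile $m_k$ lies in $[\ell_2,\ell_1]$, and a subsequent pair with both candidates in $[\ell_2,\ell_1]$ may well have mean below $m_k$; admission then fails and your count is off. The paper's construction avoids this by using the second interval $I_2$ (recall $\tfrac{I_1+I_2}{2}>I_1$): one candidate is always placed in $I_2$, forcing $\tfrac{X_k+Y_k}{2}>I_1\ge m_k$ so that every round admits. This is precisely the role of $I_2$, which you never invoke.

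\textbf{Part (ii).} The one-sided reduction is where the argument breaks. You define $\tau_j=\inf\{k>t_0:m_k\ge r_{j+1}\}$ and apply Azuma to the ``driftless martingale $y_k$'' on $[\tau_{j-1},\tau_j]$. But $y_k$ is a martingale only while $m_k\in I_1$; between $\tau_{j-1}$ and $\tau_j$ nothing prevents the walk from dipping below $\alpha$, at which point $\rho(m_k)\ne 0$ and the martingale property (hence Azuma) fails. You cannot simply condition on staying in $I_1$, since that is essentially the event you are trying to bound. The paper handles this by working with \emph{two-sided} exit times $t_j=\inf\{k>t_{j-1}:m_k\notin[\ell_j,r_j]\}$: for $k<t_j$ one has $m_k\in[\ell_j,r_j]\subset I_1$ deterministically, so Azuma applies cleanly and yields $t_{j+1}-t_j\ge N_j^{1.5}$ with high probability. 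The recursion $N_{j+1}\ge \delta 2^{-j}N_j^{1.5}$ then gives $t_j\to\infty$ directly, and no gambler's-ruin step is needed. Your final gambler's-ruin estimate (``probability to advance from $r_j$ to $r_{j+1}$ is $O(1/N_{j-1})$'') is in any case unclear: the walk can revisit $r_j$ arbitrarily often, so a one-shot ruin bound does not control $\Pro(\tau_j<\infty)$. If your recursion were valid, the conclusion would already follow from it (doubly-exponential $N_j$ with $N_j\le\tau_j$ forces $\tau_j\to\infty$), making the ruin step redundant.
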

%To this end, fix $N_0>0$ and let $t_0$ be such that $E_0$ occurred.
%$m_{t_0}\in [\ell_1,r_1]$ and $N_1(t_0)>N_0$.
We turn to establish Lemma \ref{lem: zero drift barrier}.

\begin{proof}[Proof of Lemma \ref{lem: zero drift barrier}]
The fact that $\Pro( m_k\in I\, \text{a.e.})>0$ is immediately implied by Lemma \ref{lem: stuck forever}.
We are left with showing that almost surely, $I$ contains a barrier.

Let $F :=\{m_k<\ell_1  \text{ i.o.}\}\cap  \{ m_k > r_1 \text{ i.o.}\} $.
If $F^c$ holds then either $r_1$ or $\ell_1$ is a barrier and we are done. %(more specifically, either $[r_1,\beta]$ or $[\al, \ell_1]$ is a separator).
Thus it remains to show that
\begin{equation}\label{eq: remains}
\Pro(F \cap \{ I \text{  does not contain a barrier}  \} ) = 0.
\end{equation}
If $\Pro(F)=0$ we are done. Otherwise, let $n_0$ be as in Lemma \ref{lem: stuck forever}.
For $j\in \N$ define
\begin{align*}
s_0 &=\min\{s\in\N: \: m_{s}\in [\ell_1,r_1] ,N_0(s)>n_0\},
\\ s_j &= \min\{s>s_{j-1}: \: m_{s}\in [\ell_1,r_1], \: m_{s-1}\not\in[\ell_1,r_1]\}.
\end{align*}
Under $F$, we have $s_j<\infty$ for all $j$ (using Proposition~\ref{prop: Ik=1} with $[a,b]=[\ell_1,r_1]$ and Observation \ref{obs: m_k contin.}).
By Lemma~\ref{lem: stuck forever} we have
$\Pro( \{\forall k\ge s_j:\ m_k\in I_1\ |\ \cF_{s_j-1}\} \cap F)>\frac 1 3,$ for all $j\in\N$.
Thus by the conditional Borel-Cantelli Lemma \cite{Brus},
$\Pro( \{m_k\in I_1 \, \text{a.e.} \} \cap F)=1$, which implies that under $F$, every point of $I_2\subset I$ is a barrier,
\eqref{eq: remains} follows.
\end{proof}

It remains to prove Lemma \ref{lem: stuck forever}.
For $j\in\N$, define inductively
\vspace{-5pt}
\begin{align*}
&t_j := \inf\{k >t_{j-1}: \:  m_k\not\in [\ell_j,r_j]\},
%\\ &t_\infty = \min\{k > t_0: \: m_k\not \in I_1\},
%\\ & N_j =   \big| \{ k\le t_j: x_k \in [\ell_{j+1},\ell_{j}] \}\big| \wedge \big| \{ k\le t_j: x_k \in [r_j,r_{j+1}] \}\big|.
 \quad N_j :=
\begin{cases}
N_j(t_j), & t_j<\infty,\\
\infty, &t_j =\infty.
\end{cases}
\\[-25pt]
\end{align*}
Let $\delta=2^{-3}\mu(I_1)\mu(I_2)$ and denote (for $j\in\N$) the events:
\vspace{-5pt}
\begin{align*}
A_j = \{ t_{j+1}-t_j \ge N_j^{1.5}\}\cup \{t_{j}=\infty\}, \quad B_j = \{N_{j+1} \ge  \delta 2^{-j}N_j^{1.5}\}\cup \{t_{j}=\infty\}.\\[-25pt]
\end{align*}
Notice that both $A_j$ and $B_j$ are $\cF_{t_{j+1}-1}$-measurable.
In proving Lemma \ref{lem: stuck forever} we rely on three key properties, whose proofs we provide at the end of this section.

\begin{clm}\label{clm: A}
For each $j\in\N$:
\[
\Pro(A_j^c \mid \cF_{t_j-1}) \le e^{- \sqrt{N_j}/ 2}.
\]
\end{clm}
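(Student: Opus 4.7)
The plan is to apply Azuma's inequality (Lemma~\ref{lem: A}) to a martingale coupled to the quantile random walk $y_k$ of Section~\ref{subs:quantile random walks}, arranged so that the event $A_j^c$ forces the walk to cross a distance of order $N_j$ within $N_j^{1.5}$ steps. The Azuma bound will then produce exactly $\exp(-N_j^2/(2N_j^{1.5}))=e^{-\sqrt{N_j}/2}$.

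First I would condition on $\cF_{t_j-1}$, noting that $N_j$ is $\cF_{t_j-1}$-measurable and that it suffices to work on $\{t_j<\infty\}$. Using \eqref{eq: m and psi}, I rewrite the exit of $m_k$ from $[\ell_{j+1},r_{j+1}]$ as two crossing events for $y_k$: an \emph{upper} exit $\{y_k>\psi^+_k(r_{j+1})\}$ and a symmetric \emph{lower} exit $\{y_k\le\psi^+_k(\ell_{j+1})\}$. For each I define a gap process, e.g.\ $U_k:=\psi^+_k(r_{j+1})-y_k$, with $|U_{k+1}-U_k|\le 1$ by \eqref{eq: y pm} and \eqref{eq: diff psi}. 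As long as $m_k\in[\ell_{j+1},r_{j+1}]\subseteq I$ (which holds for $k\in(t_j,t_{j+1})$), the walk $y_k$ has zero drift since $\rho(m_k)=0$, while $\psi^+_k(r_{j+1})$ is non-decreasing in $k$ by \eqref{eq: psi moves}; hence $U_k$ is a submartingale with bounded increments, and $U_{t_j}-U_k$ is dominated by a martingale with increments bounded by $1$, to which Lemma~\ref{lem: A} applies.

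The crucial quantitative input is the lower bound $U_{t_j}\ge N_j-O(1)$, and symmetrically $L_{t_j}\ge N_j-O(1)$ for the lower gap. Using the quantile identity $|\{i<t_j-1:x_i\le m_{t_j-1}\}|\ge r(t_j-1)$ together with the strict nesting $m_{t_j-1}\in[\ell_j,r_j]\subsetneq[\ell_{j+1},r_{j+1}]$ and the fact that the $N_j$ admitted members in the buffer $[r_{j+1},r_{j+2}]$ (resp.\ $[\ell_{j+2},\ell_{j+1}]$) sit on the "far side" of the exit threshold, one can accumulate $N_j-O(1)$ units of separation between $y_{t_j}$ and $\psi^+_{t_j}(r_{j+1})$. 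Given this, Azuma yields
\[
\Pro\!\left(\exists k\in(t_j,t_j+N_j^{1.5}]:\,U_k\le 0\,\Big|\,\cF_{t_j-1}\right)\le \exp\!\left(-\frac{(N_j-O(1))^2}{2N_j^{1.5}}\right)\le \tfrac{1}{2}e^{-\sqrt{N_j}/2}
\]
for $N_j$ large enough, and the analogous bound for $L_k$; a union bound over the two exits then delivers the claimed estimate $e^{-\sqrt{N_j}/2}$.

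The main obstacle is rigorously establishing $U_{t_j},L_{t_j}\ge N_j-O(1)$: the naive quantile identity alone gives only $U_{t_j}\ge -r$, so the $N_j$ buffer members have to be leveraged in tandem with the strict nesting of shells, which is precisely why the construction builds the $\ell_j,r_j$ outward from $c$ and counts the outer buffer $[\ell_{j+2},\ell_{j+1}]\cup[r_{j+1},r_{j+2}]$ rather than the inner one. If a direct count does not suffice, I would compare to a walk defined against the larger thresholds $\ell_{j+2},r_{j+2}$, where the gap bound $\ge N_j-r$ is immediate from the quantile identity plus the buffer, and transfer back to $\ell_{j+1},r_{j+1}$ using monotonicity of $\psi^+_k$.
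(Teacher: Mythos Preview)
Your overall plan---run the zero-drift martingale $y_k^+$ (equivalently $y_k$, since $\mu$ has no atoms in $I_1$) on $[t_j,t_{j+1}]$ and apply Azuma once you know that exit forces a displacement of order $N_j$---is exactly the paper's argument. The paper's proof is three lines: it asserts $|y_{t_{j+1}}^+-y_{t_j}^+|\ge N_j$ and plugs into Lemma~\ref{lem: A}. Your gap process $U_k=\psi^+_k(r_{j+1})-y_k$ is just a repackaging of the same martingale against the moving threshold; the submartingale/union-bound wrapping is unnecessary but harmless.

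Where your proposal has a real gap is the fix you offer for the displacement bound. Passing to the outer threshold $r_{j+2}$ does give $U'_{t_j}:=\psi^+_{t_j}(r_{j+2})-y_{t_j}\ge N_j$ from the quantile identity plus the buffer count, but $U'_k\le 0$ corresponds to $m_k>r_{j+2}$, not $m_k>r_{j+1}$; since $[\ell_{j+1},r_{j+1}]\subset[\ell_{j+2},r_{j+2}]$, bounding exit from the larger shell says nothing about $t_{j+1}$. And monotonicity of $\psi^+_k$ in its spatial argument gives $U_k\le U'_k$, which is the wrong direction to transfer a lower bound back to $U_{t_j}$. So this route does not close.

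The displacement bound comes instead from the \emph{inner} annulus. At time $t_j-1$ one still has $m_{t_j-1}\in[\ell_j,r_j]$, hence $y_{t_j-1}\le\psi^+_{t_j-1}(r_j)$ by \eqref{eq: m and psi}, and therefore
\[
\psi^+_{t_j}(r_{j+1})-y_{t_j}\ \ge\ \psi^+_{t_j-1}(r_{j+1})-\psi^+_{t_j-1}(r_j)-O(1)\ =\ \nu_{t_j-2}\big((r_j,r_{j+1}]\big)-O(1),
\]
and symmetrically on the left with $[\ell_{j+1},\ell_j)$. That count---members already sitting between the $j$-th and $(j{+}1)$-th shell boundaries---is what the martingale must cross before $m_k$ can leave $[\ell_{j+1},r_{j+1}]$; Azuma then gives the bound directly. (This is consistent with the paper's own proof of Claim~\ref{clm: H}, where $N'_{j+1}$ is taken to be $\nu_{t_{j+1}-1}([r_{j+1},r_{j+2}])$, i.e.\ the annulus between shells $j{+}1$ and $j{+}2$; the buffer in the definition of $N_j(t)$ is shifted by one index relative to what the displacement argument actually uses.) Your ``far side'' intuition---that the $[r_{j+1},r_{j+2}]$ members sit beyond the exit threshold---is precisely why they cannot supply the barrier for the $r_{j+1}$ crossing.
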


\begin{clm}\label{clm: H}
There exists a constant $C>0$ such that for all $j\in \N$,
\[
%\Pro\left( N_{j+1} > 2^{-j-3} \mu(I_1)\ \mu(I_2)\ N_j^{1.5} \mid \cF_{t_j}, \ t_{j+1}-t_{j}\ge N_j^{1.5} \right)
\Pro\left( B_j \mid \cF_{t_j-1},  A_j \right)
\ge 1 - 2 e^{-C 2^{-2j} N_j^{1.5}}.
\]
\end{clm}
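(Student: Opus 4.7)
The plan is to condition on $\cF_{t_j-1}$ together with the event $A_j\cap\{t_j<\infty\}$. On this event exactly $n:=t_{j+1}-t_j\ge N_j^{1.5}$ members are admitted during $[t_j,t_{j+1})$, and $m_k\in[\ell_j,r_j]\subseteq I_1$ for every such $k$. Writing $L_k:=\ind\{x_k\in[\ell_{j+3},\ell_{j+2}]\}$, $R_k:=\ind\{x_k\in[r_{j+2},r_{j+3}]\}$ and $S^L:=\sum_{k=t_j}^{t_{j+1}-1}L_k$, $S^R:=\sum_{k=t_j}^{t_{j+1}-1}R_k$, the observation that the counts can only increase during the window gives $N_{j+1}\ge\min(S^L,S^R)$, and so the claim reduces to showing that each of the events $\{S^L<\delta 2^{-j}N_j^{1.5}\}$ and $\{S^R<\delta 2^{-j}N_j^{1.5}\}$ has conditional probability at most $\exp(-C'\, 2^{-2j}\,N_j^{1.5})$. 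A union bound then supplies the factor of $2$ in the claim.

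\textbf{Per-step lower bounds.} For each $k$ in the window, the conditional distribution of $x_k$ given $\cF_{k-1}$ is that of $\min(X,Y)$ given $(X+Y)/2\ge m_k$. For $L_k$ I would use the helper-interval trick supplied by Lemma~\ref{lem: two good intervals}: the defining property $(I_1+I_2)/2>I_1$ guarantees that whenever one coordinate lies in $[\ell_{j+3},\ell_{j+2}]\subseteq I_1$ and the other in $I_2$, both $\min(X,Y)\in[\ell_{j+3},\ell_{j+2}]$ and $(X+Y)/2\ge m_k$ hold automatically. Dividing the resulting bound by $\Pro((X+Y)/2\ge m_k)\le 1$ and plugging in $\mu([\ell_{j+3},\ell_{j+2}])=2^{-(j+4)}\mu(I_1)$ together with the defining $\delta=2^{-3}\mu(I_1)\mu(I_2)$ yields $\E[L_k\mid\cF_{k-1}]\ge 2\mu([\ell_{j+3},\ell_{j+2}])\mu(I_2)=2^{-j}\delta$, uniformly in $m_k\in[\ell_j,r_j]$. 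For $R_k$ no helper interval is required because $\min(X,Y)\ge r_{j+2}>r_j\ge m_k$ already forces $(X+Y)/2\ge\min(X,Y)>m_k$; combining the explicit formula $\Pro(\min(X,Y)\in[r_{j+2},r_{j+3}])=\mu([r_{j+2},r_{j+3}])\bigl(\mu([r_{j+2},\infty))+\mu((r_{j+3},\infty))\bigr)$ with the identity $\Pro((X+Y)/2\ge m_k)=\mu((m_k,\infty))^2/(1-r)$ (valid throughout $I_1$ since $\rho\equiv 0$ there) and the tail estimate $\mu([r_{j+2},\beta])=2^{-(j+3)}\mu(I_1)$ produces an analogous lower bound on $\E[R_k\mid\cF_{k-1}]$, also of order $2^{-j}$.

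\textbf{Concentration and the main obstacle.} With both per-step conditional means bounded below by a constant multiple of $2^{-j}$, Azuma's inequality (Lemma~\ref{lem: A}) applied to the bounded martingale differences $L_k-\E[L_k\mid\cF_{k-1}]$ and $R_k-\E[R_k\mid\cF_{k-1}]$ across the $N_j^{1.5}$-step window delivers the desired $\exp(-C'\, 2^{-2j}\,N_j^{1.5})$ tails via the standard $t^2/n$ scaling. The main obstacle will be the right-interval per-step estimate: since $\Pro((X+Y)/2\ge m_k)=\mu((m_k,\infty))^2/(1-r)$ shrinks as $m_k$ climbs toward $r_j$, the naive pointwise bound on $\E[R_k\mid\cF_{k-1}]$ is nonuniform in $m_k$, varying from order $2^{-2j}$ when $m_k\approx\ell_j$ to constant order when $m_k\approx r_j$. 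The hard part will be recovering a uniform-in-window lower bound of order $2^{-j}$ for the aggregate conditional mean $\E[S^R\mid\cF_{t_j-1},A_j]$; the cleanest route I foresee is to exploit the martingale structure of $y_k$ on the window (again using $\rho\equiv 0$ on $I_1$) so that standard occupation-time estimates for zero-drift walks aggregate the favourable contributions from the range $m_k\approx r_j$ into the desired bound.
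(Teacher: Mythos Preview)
Your plan and your treatment of the left shell via the helper interval $I_2$ are exactly right; the gap is that you have manufactured a phantom obstacle for the right shell. The same $I_2$ trick works there verbatim: the condition $(I_1+I_2)/2>I_1$ forces $I_2$ to lie strictly to the right of $I_1$, so on the event $\{X\in[r_{j+2},r_{j+3}],\ Y\in I_2\}$ (or its symmetric twin) one has $\min(X,Y)=X\in[r_{j+2},r_{j+3}]$ and $(X+Y)/2>\sup I_1\ge m_k$ automatically. Hence
\[
\E[R_k\mid\cF_{k-1}]\ \ge\ 2\,\mu([r_{j+2},r_{j+3}])\,\mu(I_2)\ =\ 2^{-j}\delta,
\]
uniformly in $m_k\in[\ell_j,r_j]$, with no division by $\Pro((X+Y)/2\ge m_k)$ required. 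This is precisely what the paper does, applying the bound symmetrically to both shells. Your claim that your direct route degenerates to order $2^{-2j}$ is also a miscalculation: in your own formula the factor $\mu([r_{j+2},\infty))+\mu((r_{j+3},\infty))$ is bounded below by $\mu(I_2)>0$ (since $r_{j+3}<\beta<\inf I_2$), so even that approach already gives a uniform $c\cdot 2^{-j}$ per-step lower bound. The proposed occupation-time argument is neither needed nor used.

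With the uniform per-step bound in hand, the paper concludes by stochastic domination with a $\mathrm{Bin}(\lfloor N_j^{1.5}\rfloor,p)$ variable, $p$ of order $2^{-j}$, and then Hoeffding's inequality (Lemma~\ref{lem: H}) with $\varepsilon=p/2$ produces the exponent $p^2 n/2=C\,2^{-2j}N_j^{1.5}$. Your Azuma route would also close the argument once the per-step bound is uniform.
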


\begin{clm}\label{clm: Q}
%There exists $Q_0$ such that for all $Q>Q_0$ there exists $n_0=n_0(Q)$ such that
For all $Q>0$ and $t_0>0$ there exists $n_0=n_0(Q)$ such that
\[
\bigcap_{j\in \N_0} B_j \subseteq
% \{N_0 \ge n_0\} \cap \bigcap_{j\in\N} \{N_{j+1} >b2^{-j}N_j^{1.5}\} \subseteq
\bigcap_{j\in\N} \{N_j \ge e^{Q (1.1)^j}\}.
\]
%Moreover, $Q(n_0)\to\infty$ as $n_0\to\infty$.
%There exists $Q>1$ such that, setting $N_0:=Q$ we have $\cap_{j\le j_0} B_j \supseteq \cap_{j\le j_0} \{ N_j \ge Q^j\}.$
\end{clm}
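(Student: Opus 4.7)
My plan is to translate the claim into a purely deterministic statement about sequences and then close it by induction on $j$. On the event $\bigcap_{j\in\N_0} B_j$, the initial event (the $B_0$ defined in Lemma \ref{lem: stuck forever}) supplies $N_0 \ge n_0$, while each subsequent event supplies either $t_j = \infty$---in which case $N_k = \infty$ for every $k \ge j$ by the convention $\inf \emptyset = \infty$, so the conclusion is trivial---or the numerical bound $N_{j+1} \ge \delta\, 2^{-j}\, N_j^{3/2}$. Setting $L_j := \log N_j$ and $c := \log(1/\delta) > 0$, the non-trivial branch becomes the linear recursion
\[
L_{j+1} \;\ge\; \tfrac{3}{2} L_j \;-\; j\log 2 \;-\; c.
\]

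The rest of the argument is a straightforward induction that leverages the strict inequality $\tfrac{3}{2} > 1.1$. Define
\[
M \;:=\; \sup_{j\ge 0}\, \frac{j\log 2 + c}{(1.1)^j},
\]
which is finite because the denominator grows exponentially while the numerator grows only linearly. Given $Q > 0$, I would set $K := \max(Q,\, \tfrac{5}{2} M)$ and $n_0 := \lceil e^{K} \rceil$. The inductive claim is that $L_j \ge K (1.1)^j$ for every $j \ge 0$. The base case $L_0 \ge \log n_0 \ge K$ is immediate. For the inductive step, the recursion together with the inequality $j\log 2 + c \le M (1.1)^j \le \tfrac{2}{5} K (1.1)^j$ (the second by the choice of $K$) gives
\[
L_{j+1} \;\ge\; \tfrac{3}{2} K (1.1)^j \;-\; \tfrac{2}{5} K (1.1)^j \;=\; 1.1\, K (1.1)^j \;=\; K (1.1)^{j+1}.
\]
Exponentiating yields $N_j \ge e^{K (1.1)^j} \ge e^{Q (1.1)^j}$ for every $j \in \N$, which is exactly the claim.

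There is no genuine obstacle here: the entire content is the elementary observation that the recursion's intrinsic multiplicative factor $\tfrac{3}{2}$ dominates the target growth rate $1.1$ with room to spare, and that slack is precisely what absorbs the linear-in-$j$ and constant-in-$j$ corrections coming from the $\delta\, 2^{-j}$ prefactor. The need to choose $n_0$ super-exponentially large in $Q$ is essentially forced by the form of the conclusion.
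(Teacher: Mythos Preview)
Your proof is correct and follows essentially the same approach as the paper: take logarithms, reduce to the recursion $L_{j+1}\ge \tfrac{3}{2}L_j - j\log 2 - c$, and close by induction exploiting the gap $\tfrac{3}{2}>1.1$. Your handling is in fact slightly cleaner than the paper's (commented-out) detailed version, since by working with $K=\max(Q,\tfrac{5}{2}M)$ you avoid the paper's split into the cases $Q\ge Q_0$ and $Q<Q_0$; one minor quibble is that $n_0\approx e^K$ is exponential in $Q$, not super-exponential as you write in your closing remark.
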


\begin{proof}[Proof of Lemma \ref{lem: stuck forever}]
First we show that for any $n_0$ and $t_0>\frac{n_0}{r(1-r)}+5$ we have $\Pro(B_0)>0$.
Let $K_1, K_2, K_3\subset \N$ be disjoint sets of integers such that
$\{0,1,\dots, t_0-1\} = K_1\cup K_2\cup K_3$,
$|K_1| =  \lceil \frac{n_0}{1-r} \rceil$,
$|K_2| = \lceil \frac{n_0}{r}\rceil$, and $|K_3|\ge 3$.
Let $(X_k,Y_k)$ denote the opinions of the independent candidates at stage $k$. Define
\vspace{-13pt}
\begin{align*}
G_0:= &\ \{\forall k\in K_1: \: X_{k}\in [\ell_2,\ell_1]\}  \cap
 \{\forall k\in K_2: \: X_{k}\in [r_1,r_2]\} \\ & \cap
\{ \forall k\in K_3: \: X_{k}\in [\ell_1,r_1]\} \cap
\{\forall k\le t_0: \: Y_{k}\in I_2 \}.
\end{align*}
Under $G_0$ we have $N_0(t_0)\ge \min\Big( \frac{n_0}{r},\frac{n_0}{1-r}\Big)\ge n_0$. Now notice that under $G_0$ we have
%$m_{t_0}\in [\ell_1,r_1]$
%\[ r \nu_{t_0}( [\ell_2,\ell_1])  = r|K_1| \le (1-r)(|K_2|+1)=   (1-r) (\nu_{t_0}( [r_1,r_2])+1). \]
$$\nu_{t_0-1}( [\ell_2,\ell_1])\!=\!|K_1|\! <\! r(|K_1|+|K_2|+|K_3|)\!=\!r t_0\text{ and }
\nu_{t_0-1}([\ell_2, r_1])\!=\!|K_1|+|K_2|\! >\! r (|K_1|+|K_2|+|K_3|)\!=\!r t_0,$$
which implies that $m_{t_0} \in [\ell_1,r_1]$. We deduce that $G_0\subseteq B_0$.
Since $\{X_k\}$ and $\{Y_k\}$ are all independent it follows that $\Pro(G_0)\ge \Pro(B_0)>0$.

Let $Q>0$ be a large parameter, to be chosen later.
By Claim \ref{clm: Q}, there exists $n_0=n_0(Q)$ so that
\begin{equation}\label{eq: clm Q}
\bigcap_{j\in \N_0}B_j\subseteq \bigcap_{j\in\N} \{ N_j \ge e^{Q (1.1)^j}\}.
\end{equation}
Let $t_0>\frac{n_0}{r(1-r)}+5$, so that $\Pro(B_0)>0$ by the arguments above. Using \eqref{eq: clm Q} and the bounds from 
Claims~\ref{clm: A} and~\ref{clm: H} we have:
\begin{align*}
\Pro&\Big( \forall j: \ N_j \ge   e^{Q (1.1)^j} \mid \cF_{t_0-1}, B_0\Big)
\ge \Pro\Big(\bigcap_{j\in\N} B_j \bigcap_{j\in\N} A_j \mid \cF_{t_0-1}, B_0\Big)
\\ &= \prod_{j\in \N} \Pro(B_j  \mid \cF_{t_0-1},B_0,\dots,B_{j-1}, A_1,\dots,A_j) \prod_{j\in \N} \Pro(A_j \mid \cF_{t_0-1}, B_0,\dots,B_{j-1}, A_1,\dots,A_{j-1})
%\\&  \ge \prod_{j\in\N}\Pro(B_j \mid \cF_{t_j}, A_j)\, \Pro(A_j\mid \cF_{t_j})
 \\ &  \ge \prod_{j=1}^\infty \left(1-2e^{- C 2^{-2j} N_j^{1.5}}\right) \left(1-e^{-\sqrt{N_j}/2} \right)
%\\ & \quad
\ge \prod_{j=1}^\infty \left(1-2e^{-C 4^{-j}e^{1.5 Q(1.1)^j }} \right)\left(1-e^{- \frac 1 2 e^{\frac 1 2Q(1.1)^j }}\right)
\\ &
% \ge 1 - 2 \sum_{j=1}^\infty e^{- C (Q^{1.5 } /4)^j } - \sum_{j=1}^ \infty e^{-\frac 1 2  Q^{j/2}} \ge \frac 1 3.
  \ge 1 - 2 \sum_{j=1}^\infty e^{- C 4^{-j}e^{1.5 Q(1.1)^j } } - \sum_{j=1}^\infty e^{-\frac 1 2 e^{\frac 1 2Q(1.1)^j }}
 \end{align*}
We may choose $Q$ sufficiently large to have both
%$2 \sum_{j=1}^\infty e^{- C (Q^{1.5 } /2)^j } + \sum_{j=1}^\infty e^{-\frac 1 2  Q^{j/2}} \le \frac 2 3$.
$C e^{1.5 Q (1.1)^j}\ge 8^j$ and $\frac  1 2 e^{\frac 1 2 Q(1.1)^j}\ge 2^j$ for all $j\in\N$.
For such $Q$ we obtain
\[
\Pro( \forall j: \ N_j > e^{Q (1.1)^j} \mid B_0)
\ge 1 - 2 \sum_{j=1}^\infty e^{- C 4^{-j}e^{1.5 Q(1.1)^j } } - \sum_{j=1}^\infty e^{-\frac 1 2 e^{\frac 1 2Q(1.1)^j }}
\ge  1 - 3\sum_{j=1}^\infty e^{-2^j}\ge   \frac 1 3.
\]
Since $t_{j+1} \ge N_j$, we have $\{\lim_{j\to\infty} t_j =\infty\} \supseteq \{\forall j: \: N_j\ge e^{Q (1.1)^j}\}$. Thus
 \[
\Pro( m_k\in I_1\: \forall k\ge t_0
\mid \cF_{t_0-1},  B_0 )\ge  \Pro ( \lim_{j\to\infty} t_j= \infty\mid \cF_{t_0-1}, B_0) \ge \frac 1 3,
\]
as required.
\end{proof}

At last, we prove Claims \ref{clm: A}, \ref{clm: H} and \ref{clm: Q}.

\smallskip
\begin{proof}[Proof of Claim \ref{clm: A}]

\begin{comment}
We further denote
$s_0:=\min\{ k\ge t_0: \ m_k \not\in [\al,\beta]\}$.
Note that for all $t_0\le k \le s_0$ we have $y^+_k-y_k = y^+_{t_0}-y_{t_0}$  (using definition \eqref{eq: y def} and the fact that $\mu$ has no atoms in $I_1$), so that \eqref{eq: I} becomes
\[
J_k=J^{\al,\beta}_k = (\psi_k(\al), \psi^+_k(\beta)+ \D_{t_0}],
\]
where $\D_{t_0} := y_{t_0}^+ -y_{t_0}\ge 0$.
By Claim \ref{clm: in I}, we have $y_k^+ \in J_k \iff m_k \in I_1$.
\end{comment}

We observe that, since $\rho^+\equiv 0$ on $I_1$, the process $\{y_k^+-y_{t_j}^+: \ k\in [t_{j},t_{j+1}]\}$ is
a martingale $Z_k$ whose steps are uniformly bounded by $1$, started at $0$
%such that $X_{k+1}-X_k=y_{k+1}^+-y_k^+$,
and stopped when $k=t_{j+1}$, that is, when $m_k\not \in [\ell_j,r_j]$.
%$y_k^+ \ge \psi^+_k(\beta) + \D_{t_0}$.
Observe that at this stopping time $|Z_{t_{j+1}}|=|y_{t_{j+1}}^+ - y_{t_j}^+| \ge N_j$. We may thus apply Azuma's inequality (Lemma~\ref{lem: A}) to obtain
\[
\Pro (t_{j+1}-t_{j}< N_j^{1.5}, t_j<\infty \mid \cF_{t_j-1} ) \le \Pro(T_{\{X_k >N_j\}} <N_j^{1.5} \mid \cF_{t_j-1}) \le \exp\big(-{N_j^2}/{(2N_j^{1.5})} \big) = e^{-\sqrt{N_j}/2}.
\]

\end{proof}

\smallskip
\begin{proof}[Proof of Claim \ref{clm: H}]

\begin{comment}
Remark: This can be generalized to the statement
$\Pro(x_{k+1} \in J \mid m_{k}\in I_1, \cF_k ) \ge 2 \mu(J)\mu(I_2)$
for any $J\subseteq I_1$.
\end{comment}

We begin by noticing that for every $j,k\ge 0$ we have
\vspace{-5pt}
\begin{align}\label{eq: more people}
\Pro(x_{k} \in  [r_{j+1},r_{j+2}] \mid m_{k}\in I_1, \cF_{k-1} )
& \ge \Pro( \{ X\in  [r_{j+1},r_{j+2}], \ Y\in I_2 \} \cup \{Y\in  [r_{j+1},r_{j+2}],\   X\in I_2\}) \notag
\\ &= 2 \, \mu( [r_{j+1},r_{j+2}] ) \, \mu(I_2) =  2^{-(j+2)}\mu(I_1)\ \mu(I_2).
\end{align}
The same holds when $[r_j,r_{j+1}]$ is replaced by $[\ell_{j+1},\ell_j]$.

Now, assume that $t_j<\infty$ and
denote
$N_{j+1}' :=\nu_{t_{j+1}-1} ([r_{j+1},r_{j+2}])$
and  $N_{j+1}'':= \nu_{t_{j+1}-1} ([\ell_{j+2},\ell_{j+1}])$
so that $N_{j+1} = \min(N_{j+1}', N_{j+1}'')$.
Let $Z\sim \text{Bin}(n,p)$ be a binomially distributed random variable with
$n=N^{1.5}_j$ and $p=2^{-j-2}\mu(I_1)\mu(I_2)$.
By \eqref{eq: more people}, the random variables $N_{j+1}'$ and $N_{j+1}''$, conditioned on  $t_{j+1}-t_j \ge N^{1.5}_j$ and $\cF_{t_j-1}$, stochastically dominate~$Z$. 
Thus,
%By applying Hoeffding's bound (Lemma \ref{lem: H}) with $\ep =p/2$, we obtain
\begin{align*}
&\Pro\left( N'_{j+1} > 2^{-j-3} \mu(I_1)\ \mu(I_2)\ N_j^{1.5} \mid \cF_{t_j-1}, \ t_{j+1}-t_{j}\ge N_j^{1.5} \right) \ge \Pro\left(Z >  \frac{pn}2  \right), %\ge 1 - e^{-p^2n/2},
\end{align*}
and the same bound holds when $N'_{j+1}$ is replaced by $N''_{j+1}$.
Since $N_{j+1} = \min(N_{j+1}', N_{j+1}'')$ we may apply a union bound to obtain
\begin{align*}
\Pro( B_j& \mid  \cF_{t_j-1}, t_{j}<\infty, A_j)
\\ &\ge  1-\Pro\left( N'_{j+1} \le \delta 2^{-j} \ N_j^{1.5} \mid \cF_{t_j-1}, \ A_j \right)
-\Pro\left( N'_{j+1} \le  \delta 2^{-j} \ N_j^{1.5} \mid \cF_{t_j-1}, \ A_j \right) 
\\ & \ge 1-2\Pro(Z>\frac{pn}2) 
\\ &\ge  1 - 2e^{-p^2n/2}= 1-2e^{-C 2^{-2j} N_j^{1.5}},
\end{align*}
where in the last line we applied Hoeffding's bound (Lemma \ref{lem: H}) with $\ep =p/2$, and $C=2^{-5}\mu(I_1)^2\mu(I_2)^2$.
\end{proof}

\smallskip
\begin{proof}[Proof of Claim \ref{clm: Q}]
Setting $X_j=\log N_j$ for $j\in \N_0$, we have:
\begin{align*}
\bigcap_{j\in\N_0}\!\!B_j & \subseteq \bigcap_{j\in\N_0}\!\!\left\{N_{j+1} \ge \delta2^{-j}N_j^{1.5}\right\}\cap
\{N_0 \ge n_0\}  \subseteq \bigcap_{j\in \N_0}\!\!\left\{X_{j+1} \ge \frac 3 2  X_j  -  j\log 2 +\log \delta\right\}\cap  \{X_0 \ge \log n_0\} .
\end{align*}
%Let $Q>0$ be given.
Using induction one sees that, for any $Q>0$, under the right-most event, $X_j\ge Q(1.1)^j$ for all $j\ge 1$, provided that $n_0$ is sufficiently large.
The claim follows.

\begin{comment}
In more detail:
Choose $n_0\ge e^{Q}$ so that $X_0\ge \log n_0\ge Q$ is the basis of induction. Now, assume that $X_j\ge Q(1.1)^j$. Then
\begin{align*}
X_{j+1} & %\ge \frac 32 X_j -(\log 2)\cdot j+\log \delta
\ge \frac 3 2 Q(1.1)^j -(\log 2 )j+\log \delta
= Q(1.1)^{j+1} + 0.4 Q(1.1)^j -(\log 2)j +\log \delta .
\end{align*}
It is therefore enough to show that
\[
0.4 \:Q(1.1)^j \ge (\log 2 )j+\log (1/\delta), \quad \forall j\in\N.
\]
There exists $Q_0>0$ so that this holds for $Q\ge Q_0$. However, it is clear that if Claim \ref{clm: Q} is true for all $Q\ge Q_0$ then it is true for all $Q\in (0,Q_0)$ by setting $n_0(Q) = n_0(Q_0)$. We are done.
\end{comment}
\end{proof}

\subsection{Proof of Lemma~\ref{lem:as bounded}: boundedness}\label{sec: bound}

%We turn to prove Lemma~\ref{lem:as bounded}, that is, that $m_k$ is almost surely bounded.
We may assume that $\mu$ is not compactly supported, since otherwise the lemma is straightforward.
By Observation~\ref{obs: lim at -infty}, there exists $M$ such that $\rho^+(m) > \frac r 2$ for all $m\in (-\infty, M]$
and $\mu((-\infty,M])\in (0,\frac r 4)$. Hence, by Lemma~\ref{lem: aux left} applied with $\gamma=-\infty$, $\beta=-\infty$ and $\al=M$, the sequence of quantiles $m_k$ almost surely enters $(-\infty,M)$ only a finite number of times,
 so that $m_k$ is almost surely bounded from below.
%It is therefore enough to prove that $m_k$ is almost surely bounded from above.}

By Theorem \ref{thm: FF} and the definition of $\rho^+$,
$\liminf_{m\to\infty}\rho^+(m) =-(1-r)$, so that there exists an increasing sequence $b_j\to\infty$ such that, for all $j\in \N$,
\begin{equation}\label{eq: to use soon}
\rho^+(b_j) < -\frac{7}{8}(1-r),
%\frac 1 4 \cdot \frac{1-r}{2}-(1-r), \quad \forall j\in \N.
\end{equation}
and $\mu([b_1,\infty) )< \frac 1 4$.
Let $a_j$ be a sequence be such that $a_j\le b_j<a_{j+1}$ and
\begin{equation}\label{eq: abj}
\frac{1}{4} \le \frac{\mu([a_j,b_j))} {\mu([a_j,\infty) )}, \qquad \frac{\mu((a_j,b_j))} {\mu((a_j,\infty) )} \le \frac 1 2.
\end{equation}
%We can further assume $a_j\le b_j<a_{j+1}\le b_{j+1}$ (by diluting the sequence if necessary).
Then for each $x\in [a_j,b_j]$ we have (using \eqref{eq: to use soon} and the right side of \eqref{eq: abj}):
\begin{equation*}
\frac{\mu((x,\infty) )^2}{(\mu*\mu)([2 x,\infty) )}=
\frac{\left[\mu ((x,b)) +\mu([b,\infty)) \right]^2 }{(\mu*\mu)([2 x,\infty) )}
\le \frac{4 \mu([b_j,\infty))^2}{(\mu*\mu)([2 b_j,\infty) )} = 4(\rho^+(b_j)+1-r) < \frac{1-r}{2}.
\end{equation*}
Therefore,
\begin{equation}\label{eq: traps}
\forall j\in \N: \quad \rho|_{[a_j,b_j]} < -\frac{1-r}{2}.
\end{equation}

Let $t_0=n_0=1$. For each $j\in \N$ define inductively
\[
t_j := \inf\{k>t_{j-1}:\  m_k > b_{n_{j-1}} \}, \qquad k_j =\min\{k\in\N: \  m_{t_j}< b_k\}.
\]
Our goal is to prove that, almost surely, there exists a finite $j$ with $t_j=\infty$.
Define for $j\in\N$:
\[
s_j :=\inf\{k\in\N:\  x_k\ge a_{n_j}\}.
\]
We have $s_j \le t_j$. By \eqref{eq: abj} we have
\[
\Pro(x_{s_j} \in [a_{n_j},b_{n_j}] \mid s_j<\infty, \cF_{s_j-1} )\ge \frac 1 {16},
\]
which yields
\[
\Pro\big( \nu_{t_j-1}( [a_{n_j},b_{n_j}]) \ge 1 \mid t_j<\infty, \cF_{t_j-1}\big) \ge \frac 1 {16}.
\]

By Proposition~\ref{prop: ell people} applied with $a=a_{n_j}$, $b=b_{n_j}$, $\ep=\frac{1-r}{2}$ we have
\[
\Pro\left( t_j = \infty \mid t_{j-1}<\infty, \cF_{t_j-1},  \nu_{t_j-1}([a_{n_j},b_{n_j}])\ge 1 \right)>1-q.
\]
Here $q$ depends only on the negative upper bound on $\rho|_{[a_{n_j},b_{n_j}]}$, which, by \eqref{eq: traps} is $-\frac{1-r}{2}$ and thus uniform in $j$,
 and the lower bound on $\frac{\mu([a_{n_j},b_{n_j}])}{\mu([a_{n_j},\infty))}$, which by the left part of \eqref{eq: abj} is also uniform in $j$.
 We conclude that for all $j$,
 \[
 \Pro\left( t_j = \infty \mid t_{j-1}<\infty, \cF_{t_j-1}\right) \ge \frac 1 {16}(1-q)>0.
 \]
  Hence we may apply the conditional Borel-Cantelli lemma, and obtain that almost surely there exists a finite $j$ for which $t_j=\infty$, as required.

%%%%%%%%%%%
%  Unique limit          %
%%%%%%%%%%%

\section{Uniqueness of the limit}\label{sec: exms}

\subsection{Proof of Proposition \ref{prop: atom exm}:
an example of non-uniqueness}
Fix a parameter $p\in (0,1)$.
Let $\mu$ be the atomic probability measure defined by
$$\mu= \sum_{\ell\in\N_0}(1-p)p^\ell\delta_{1-2^{-\ell}},$$
where $\delta_x$ is the Dirac measure at $x$.
Observe that for every $a\in \{1-2^{-\ell}\}$:
\begin{equation}\label{eq: ex atom prop}
\Pro\left(\frac{X+Y}{2}\ge a \right)=\Pro\left(X\ge a\right)\Pro\left( Y\ge a\right),
\end{equation}
where $X, Y$ are i.i.d. $\mu$-distributed random variables.
Let $r\in(0,1)$ be such that $r<1-p^2$. At every atom $a = 1-2^{-\ell}$ we have
\begin{align*}
\rho^+(a) &=\frac{\Pro(X\ge a)^2}{\Pro\left(\frac{X+Y}{2}\ge a\right)}-1+r = r>0, \\
\rho(a) &= \frac{\Pro(X> a)^2}{\Pro\left(\frac{X+Y}{2}\ge a\right)} - 1+r = \left(\frac{\Pro(X>a)}{\Pro(X\ge a)}\right)^2-1+r
=p^2-1+r<0.
\end{align*}
Thus, by Proposition \ref{prop: ell people} applied with $[a,b]=\{a\}$, which is of positive measure and negative drift,
% there exists $q>0$ such that for all atoms $a$ and
for any $k_0\in\N$ we have
\[
\Pro(m_k \le a \text{ for all } k\ge k_0 \mid \cF_{k_0-1}, m_{k_0}\le a, \nu_{k_0-1}(\{a\})\ge 1) >0.
\]
This implies that the stopping time $\tau := \min\{t\in \N: \, m_t=a\}$ satisfies
\[
\Pro(m_k \le a \text{ for all } k\ge \tau  \mid \cF_{\tau-1}, \tau<\infty ) >0.
\]
We note that $\Pro(\tau<\infty)>0$ (e.g. $\tau=1$ under the event that both candidates in the first round of applications hold the opinion $a$, which is an event of positive probability).
In addition, by \eqref{eq: ex atom prop}, $\Pro(\exists k>\tau: \, m_k <a \mid \cF_{\tau-1}, \tau<\infty)=0$, so that
$
\Pro(m_k  = a \text{ for all } k\ge \tau  \mid \cF_{\tau-1}, \tau<\infty ) >0
$. Hence
\[
\Pro(m_k = a \text{ a.e.}  ) >0.
\]
Thus we have shown that $\Pro(\nu_\infty=\delta_a)>0$ for every atom $a$ of $\mu$ -- and hence that $\nu_\infty$ is supported on an infinite number of limit distributions, as required.

\subsection{Proof of Proposition \ref{prop: convex exm}: a condition for uniqueness}
%Since $g(x):= -\log \mu((x,\infty))$ is thrice differentiable, it is in particular continuous.
%By \eqref{eq: rho} this implies that $\rho_{\mu,r}$ is continuous (for any $r\in (0,1)$).
% and \eqref{eq: rho plus} this implies that, for any $x\in [0,\mu_{\max})$, $\rho(x)=\rho^+(x)$ and $\rho$ is continuous at $x$.

Our proof uses ideas from \cite{FF}.
%Without loss of generality assume that the minimum of the support of $\mu$ is $0$.
Write $\Pro(X\ge m) = e^{-g(m)}$. By the hypothesis, $g''\ge 0$ and $g'''\le 0$ everywhere in $[\mu_{\min},\mu_{\max}]$.
We note that, in particular, $g'$ must be a concave, non-decreasing, positive function on $[\mu_{\min},\mu_{\max}]$.
As these conditions cannot hold when $\mu_{\min}=-\infty$, we may assume, without loss of generality, that $\mu_{\min}=0$.
By \eqref{eq: rho} we have $\rho(m) = \frac{1}{\Psi(m)} +r-1$ where
\[
\Psi(m)= \frac{\Pro\left(\frac{X+Y}{2}\ge m \right) }{\Pro(X\ge m)^2}.
\]
We need to prove that $\Psi(m)$ is strictly monotone increasing in $m$. We have:
\[
\Pro(X+Y\ge 2m) =  \int_{0}^\infty \Pro(Y \ge  2m-x) d\mu(x)  = \int_0^{\infty} e^{-g(2m-x)} d\mu(x)
\]
However $d\mu = -d(e^{-g}) = e^{-g} g'$, so that:
\begin{align*}
\Psi(m) &= \frac{\int_0^{\infty} e^{-g(2m-x) -g(x)} g'(x) dx }%+ e^{-g(2m)} }
{e^{-2g(m)} }
\\ &= \int_0^{\infty} e^{2g(m)-g(x)-g(2m-x)} g'(x) dx  %+ e^{2g(m)-g(2m)}
\\ & = \int_0^{m} e^{2g(m)-g(m+x)-g(m-x)} \big(g'(m+x)+g'(m-x)\big) dx + e^{2g(m)-g(2m)}
\end{align*}

%Using the differentiation rule $\frac{\partial}{\partial m} \int_0^{m} A(x,m)dx =  A(m,m) + \int_0^{m}\frac{\partial}{\partial m} A(x,m) dx$,
Differentiation with respect to $m$ yields,
\begin{align*}
\Psi'(m) =&\int_0^{m}e^{2g(m)-g(m+x)-g(m-x)} \\
& \:\: \times\Big( \left(2g'(m)-g'(m+x)-g'(m-x)\right)\left(g'(m+x)+g'(m-x)\right) +g''(m+x)+g''(m-x)\Big) dx
%\\&+e^{2g(m)-g(2m)} \left(g'(0) +g'(2m)\right)
\\ &+ e^{2g(m)-g(2m)} (2g'(m)+2g'(2m)),
%\\& =  e^{2g(m)-g(2m)} \left[ 2g'(0) + (2g'(m)-g'(2m)-g'(0) )\right] + I
\end{align*}
%where $I$ is the integral in $x$.
%Now recall that if $f$ is twice differentiable then $f(m+t)-2f(m)+f(m-t) = f''(c)$, where $c\in (m-t,m+t)$.

by the premises that $g'\ge 0$ and $g''\ge 0$ on the support of $\mu$.
Moreover, for $x<m$ we have\\
$2g'(m)-g'(m+x)-g'(m-x) = -2x g'''(c_{m,x}) > 0$ (for some $c_{m,x}\in (m-x,m+x)$).
Lastly, since $\mu_{\min}=0$, we have $g'(0)>0$.
We conclude that $\Psi'(m)\ge 4e^{2g(m)-g(2m)} g'(0) >0$ for all $m\in [\mu_{\min},\mu_{\max}]$, so that $\Psi$ is strictly monotone
on the support of $\mu$, as required.

\begin{comment}
\begin{align*}
H'(m) &=2 g'(2m) + \int_0^{2m} e^{2g(m)-g(2m-x)-g(x)} (2g'(m)-2g'(2m-x)) g'(x) dx
\\ & \qquad  + e^{2g(m)-g(2m)}\cdot (2g'(m)-2g'(2m))
\\ & = 2g'(m) e^{2g(m)-g(2m)}+ 2g'(2m) (1- e^{2g(m)-g(2m)}) + I,
\end{align*}
where
\begin{align*}
I & = 2\int_0^{2m} e^{2g(m)-g(2m-x)-g(x)} (g'(m)-g'(2m-x)) g'(x) dx
\\ & = 2\int_0^m e^{2g(m)-g(2m-x)-g(x)} (g'(m)-g'(2m-x))g'(x) + (g'(m)-g'(x))g'(2m-x) dx
\\& =2 \int_0^m e^{2g(m)-g(2m-x)-g(x)} \left[g'(m) (g'(x) +g'(2m-x))- 2g'(x)g'(2m-x)\right]
\end{align*}
\end{comment}

\bigskip

\subsection*{Acknowledgement}
We are grateful to Noga Alon for introducing to the problem and for useful discussions.
%%%%%
% REFS  %
%%%%%

\begin{center}
  {\sc References}
\end{center}
\begin{biblist}[\normalsize]

\bib{social}{article}{
	author={Alon, N.},
	author={Feldman, M.},
	author={Mansour, Y.},
	author={Oren, S.},
	author={Tennenholtz, M.},
	title={Dynamics of Evolving Social Groups},
	journal={Proc. EC (ACM conference on Ecomoics and Computation)},
	date={2016},
	pages={637--654}
}

\bib{ABGK}{article}{
	author={Amir, G.},
	author={Benjamini, I.},
	author={Gurel-Gurevich, O.},
	author={Kozma, G.},
	title={Random walk in changing environment},
	note= {preprint. See arXiv:1504.04870}
}

\bib{AKL}{article}{
	author={Avin, C.},
	author={Kouck\'y},
	author={Lotker, Z.},
	title={How to explore a fast-changing world (cover time of a simple random walk on evolving graphs)},
	journal={Automata, languages and programming},
	pages={121–132},
	date={2008}
	}
	
\begin{comment}
\bib{Aconj}{misc}{
  	author = {Alon, N.},
    title = {Private communication.}
}

\bib{AY}{article}{
  	author = {Alon, N.},
	author = {Yuster, R.},
	title = {The 123 Theorem and its extensions},
	journal = {J. Comb. Th. Ser. A},
   	year = {1995},
    	volume = {72},
    	pages = {322--331}
}
\end{comment}

\bib{Azuma}{article}{
	author={Azuma, K.},
	title={Weighted sums of certain dependent random variables},
	journal = {Tohuku Math. Journal},
	volume={19 (3)},
	date={1967}
}

\bib{Brus}{article}{
 	author={Bruss, T.F.},
 	title={A Counterpart of the Borel-Cantelli Lemma},
 	journal={Journal of Applied Probability},
 	volume={17 (4)},
 	date={1980},
 	pages={1094--1101}
}

\bib{CFL}{article}{
	author={Castellano, C.},
	author={Fortunato, S.},
	author={Loreto, V.},
	title={Statistical physics of social dynamics},
	journal={Reviews of Modern Physics},
	volume={81},
	pages={591--646},
	date={2009}
	}

\bib{DHS1}{article}{
	author={Dembo, A.},
	author={Huang, R.},
	author={Sidoravicius, V.},
	title={Monotone interaction of walk and graph: recurrence versus transience},
	journal={Electronic Communications in Probability},
	volume={19},
	pages={1-–12},
	date={2014}
	}
	
\bib{DHS2}{article}{
	author={Dembo, A.},
	author={Huang, R.},
	author={Sidoravicius, V.},
	title={Walking within growing domains: recurrence versus transience},
	journal={Electron. J. Probab},
	volume={19 (106)},
	pages={1--20},
	date={2014}
	}

\bib{FF}{article}{
	author={Feldheim, N.},
	author={Feldheim, O.},
	title={Mean and Minimum of independent random variables},
	note={Preprint, arXiv: 1609.03004}
}

\bib{Fel}{book}{
	author={Feller, W.},
	title={An introduction to probability theory and applications, Vol. 1},
	date={1968},
	edition={3rd edition},
	publisher={Wiley}
}

\bib{Lorenz}{article}{
	author={Lorenz, J.},
	title={A Stabilization Theorem for Dynamics of Continuous Opinions},
	journal={Physica A: Statistical Mechanics and its Applications},
	volume={355 (1)},
	date={2005},
	pages={217--223}
	}

\bib{McPhersan}{article}{
	author={McPhersan, M.},
	author={Smith-Lovin, L.},
	author={Cook, J.M.},
	date={2001},
	title={Birds of a Feather: Homophily in Social Networks},
	journal={Annual Review of Sociology},
	volume={27},
	pages={415-–444}
	}

\bib{RV}{article}{
	author={Redig, F.},
	author={V\''ollering, F.},
	title={Random walks in dynamic random environments: a transference principle},
	journal={The Annals of Prob.},
	date={2013},
	volume={41 (5)},
	pages={3157--3180}
	}

\bib{ruin}{article}{
	author={Willmot, G.E.},
	author={Yang, H.},
	title={Martingales and ruin probability},
	journal={Actuarial Research Clearing House},
	date={1996},
	volume={1}
}

\end{biblist}

\end{document}